\newcounter{stepnb}
\tikzstyle{nodo}=[circle,draw,fill,inner sep=0pt,minimum size=%
\tikzstyle{infinito}=[circle,inner sep=0pt,minimum size=0mm]
\newtheorem{theorem}{Theorem}[section]
\newtheorem{lemma}[theorem]{Lemma}
\newtheorem{proposition}[theorem]{Proposition}
\newtheorem{corol}[theorem]{Corollary}
\theoremstyle{remark}
\newtheorem{remark}[theorem]{Remark}
\newtheorem{definition}[theorem]{Definition}
\numberwithin{equation}{section}
\newcommand{\R}{\mathbb{R}}
\newcommand{\Leb}{{\mathscr L}}
\newcommand{\ee}{\varepsilon}
\newcommand{\be}{\begin{equation}}
\newcommand{\eq}{\end{equation}}
\begin{document}

\title[One--dimensional IBVP]{Initial--boundary value problems for merely bounded nearly incompressible vector fields in one space dimension}

\author[S.~Dovetta]{Simone Dovetta}
\address{S.D. Dipartimento di Scienze di Base e Applicate per l'Ingegneria, Università degli Studi di Roma "La Sapienza", via Antonio Scarpa 14, 00161 Roma, Italy.}
\email{simone.dovetta@uniroma1.it}
\author[E. Marconi]{Elio Marconi}
\address{E.M. EPFL B, Station 8, CH-1015 Lausanne, Switzerland.}
\email{elio.marconi@epfl.ch}
\author[L.~V.~Spinolo]{Laura V.~Spinolo}
\address{L.V.S. IMATI-CNR, via Ferrata 5, I-27100 Pavia, Italy.}
\email{spinolo@imati.cnr.it}
\maketitle
{
\rightskip .85 cm
\leftskip .85 cm
\parindent 0 pt
\begin{footnotesize}

{\sc Abstract.}  We establish existence and uniqueness results for initial-boundary value problems for transport equations in one space dimension with nearly incompressible velocity fields, under the sole assumption that the fields are bounded. In the case where the velocity field is either nonnegative or nonpositive, one can rely on similar techniques as in the case of the Cauchy problem. Conversely, in the general case we introduce a new and more technically demanding construction, which heuristically speaking relies on a ``lagrangian formulation" of the problem, albeit in a highly irregular setting. We also establish stability of the solution in weak and strong topologies, and propagation of the $BV$ regularity. In the case of  either nonnegative or nonpositive velocity fields we also establish a $BV$-in-time regularity result, and we exhibit a counterexample showing that the result is false in the case of sign-changing vector fields. To conclude, we establish a trace renormalization property.  
\\

\medskip\noindent
{\sc Keywords:} transport equation, initial-boundary value problem, nearly incompressible vector fields, irregular coefficients, continuity equation. 

\medskip\noindent
{\sc MSC (2020):  35F16}

\end{footnotesize}
}

\section{Introduction and main results}
The pioneering works by DiPerna--Lions~\cite{DiPernaLions} and Ambrosio~\cite{Ambrosio} establish existence and uniqueness results for the Cauchy problem for transport and continuity equations with weakly differentiable velocity 
fields: in particular, Ambrosio~\cite{Ambrosio} deals with $BV$ (bounded total variation) vector fields satisfying suitable bounds on the space divergence. In view of the applications to conservation laws, the requirement on the space divergence in~\cite{Ambrosio} can be conveniently replaced by the notion of \emph{nearly incompressible vector field}, which is extensively discussed by De Lellis in~\cite{DL07}. 

The analysis in the present paper is originally motivated by the applications to a family of conservation laws modeling traffic flows on road networks discussed in the companion paper~\cite{DMS}. Here we focus on initial-boundary value problems for transport equations with nearly incompressible vector fields in one space dimension and, under the sole assumption that the vector field is bounded, establish existence and uniqueness results, stability, and propagation of $BV$ regularity. Initial-boundary value problems for transport equations with low regularity coefficients have 
been previously studied for Sobolev~\cite{Boyer} or $BV$~\cite{CCS17,CrippaDonadelloSpinolo}  vector fields. Note that the counterexamples in~\cite{CrippaDonadelloSpinolo} show that, in several space dimensions, as soon as the $BV$ regularity deteriorates at the domain boundary, uniqueness may be lost. Compared to the analysis in~\cite{Boyer,CCS17,CrippaDonadelloSpinolo} we restrict to the one-dimensional case, but impose much weaker assumptions on the velocity field as we remove the requirement that it has Sobolev or $BV$ regularity. Well-posedness results for the Cauchy problem for merely bounded nearly incompressible vector fields have been established by Panov~\cite{Panov}, see also Gusev~\cite{Gusev}. We were able to extend the techniques in~\cite{Gusev,Panov} to tackle the initial-boundary value problem in the case the velocity field is either nonpositive or nonegative. However, as pointed out in Remark~\ref{r:rosso} below, there is apparently a fairly severe obstruction to the extension of these techniques to the case of sign--changing vector fields. To tackle this case, we introduce a new and more technically demanding construction, which we comment upon later on in the introduction. 

We now provide the definition of nearly incompressible vector field in one space dimension (we refer to~\cite{DL07} for the multi-dimensional case). 
\begin{definition}
\label{d:ni1d}
Consider an open interval $]\alpha, \beta[$ and a time $T>0$.  We term a vector field $b \in L^\infty (]0, T[ \times ]\alpha, \beta[; \R)$  \emph{nearly incompressible} if there is a density function $\rho \in  L^\infty (]0, T[ \times ]\alpha, \beta[)$ such that
\begin{itemize}
\item[i)] $\rho\geq0$ a.e. on $]0, T[\times ]\alpha, \beta[$\footnote{Note that some authors give a more restrictive definition and require that $\rho$ is bounded away from $0$, see for instance the recent groundbreaking work~\cite{BianchiniBonicatto}};
\item[ii)] $\rho$ is a distributional solution of the equation
\be \label{e:density1d}
    \partial_t \rho + \partial_x [b \rho] =0 \quad \text{on $]0, T[ \times ]\alpha, \beta[$.}
\eq
\end{itemize}
\end{definition}
To simplify the exposition, in the following we assume that $]\alpha, \beta[$ is a \emph{bounded} interval, but our analysis extends to the cases $\alpha = - \infty$ and $\beta = + \infty$.  We consider the following initial-boundary value problem
\begin{equation}
			\label{IVPtheta2}
			\begin{cases}
			\partial_t\left[ \rho\theta\right]+\partial_x [ b \rho\theta ]=0 
           &  \text{on }  ]0,T[ \times]\alpha, \beta[ \\
               \theta( t, \alpha) = \bar \theta (t) & \text{if $b   (t, \alpha) > 0$} \\
               \theta( t, \beta) = \underline \theta (t) & \text{if $b  (t, \beta) < 0$}   \\  
			\theta(0,\cdot)= \theta_0
			\end{cases}
		\end{equation} 
where $\bar \theta, \underline \theta$ and $\theta_0$ are assigned functions. Some remarks are here in order. First, by combining the equation at the first line of~\eqref{IVPtheta2} with~\eqref{e:density1d} we \emph{formally} obtain the transport equation
\be \label{e:arancione}
    \partial_t \theta+ b \ \partial_x \theta =0.
\eq
Note, however, that we are interested in the case where $b$ is a bounded function and $\partial_x \theta$ is a distribution, and therefore the product $b \ \partial_x \theta$ is not well-defined. Second, in~\eqref{IVPtheta2} we assign the values of $\theta(\cdot, \alpha)$ and $\theta(\cdot, \beta)$ on the set where $b(\cdot, \alpha)>0$ and $b(\cdot, \beta)<0$, respectively. This is consistent with~\eqref{e:arancione}, but in principle nonsensical since $b$ is only an $L^\infty$ function and hence the values $b(\cdot, \alpha)$ and $b(\cdot, \beta)$, which are the values on a zero-measure set, are not well-defined. Following~\cite{CCS17,CrippaDonadelloSpinolo}, we tackle this problem and provide a rigorous formulation of~\eqref{IVPtheta2} by relying on the theory of normal traces for measure-divergence vector fields developed in~\cite{AmbrosioCrippaManiglia,Anzellotti,ChenZiemerTorres}, see~\S\ref{s:distform} and in particular Definition~\ref{d:ibvp}. As a third remark we point out that, by combining the rigorous formulation of the boundary condition given in Definition~\ref{d:ibvp} with Lemma~\ref{L_estimate_traces}, we obtain that in~\eqref{IVPtheta2} one could equivalently assign the boundary condition on the set where $b(\cdot, \alpha) \ge 0$ and $b(\cdot, \beta) \leq 0$, see Remark~\ref{r:libri}. In particular, if $b \ge 0$\footnote{In the following, to simplify the notation we will  explicitely discuss the case $b\ge0$ only, however our results straightforwardly extend to the case $b \leq 0$} then we can consider the initial-boundary value problem
\begin{equation}
	\label{IVPtheta}
	\begin{cases}
	\partial_t\left[ \rho\theta\right]+\partial_x [ b \rho\theta ]=0 
	\quad  \text{on }  ]0,T[ \times]\alpha, \beta[ \\
	\theta( \cdot, \alpha) = \bar \theta  
	\qquad   
	\theta(0,\cdot)= \theta_0.
	\end{cases}
	\end{equation} 
As a matter of fact, all the results contained in this work concerning~\eqref{IVPtheta2} extend to the case of~\eqref{IVPtheta}, provided $b \ge 0$. We now state our existence and uniqueness result. 
\begin{theorem} \label{t:genex}
Fix $T>0$ and an open and bounded interval $]\alpha, \beta[ \subseteq \R$. Assume that $b \in L^\infty (]0, T[ \times ]\alpha, \beta[)$ is a nearly incompressible vector field with density $\rho$. Then for every $\bar \theta, \underline \theta \in L^\infty(]0, T[)$, and $\theta_0 \in L^\infty (]\alpha, \beta[)$ there is 
$\theta \in L^\infty   (]0, T[ \times ]\alpha, \beta[)$ that is solution, in the sense of Definition~\ref{d:ibvp}, of the initial-boundary value problem~\eqref{IVPtheta2}
and satisfies
\be \label{e:linftysol2}
    \| \theta \|_{L^\infty} \leq \max \{ \| \bar \theta \|_{L^\infty} , \|  \theta_0 \|_{L^\infty}, \| \underline \theta \|_{L^\infty}\}. 
\eq
Also, 
if $\theta_1, \theta_2  \in L^\infty   (]0, T[ \times ]\alpha, \beta[)$ are two solutions of~\eqref{IVPtheta2}, then $\rho \theta_1 =\rho \theta_2$ a.e. on $]0, T[ \times ]\alpha, \beta[$. 
\end{theorem}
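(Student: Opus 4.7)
I would decouple the statement into existence of a solution and uniqueness modulo the natural equivalence on $\{\rho=0\}$, and in both cases treat first the one--signed situation $b\ge 0$ and then, through a more delicate construction, the sign--changing one. Existence is handled via approximation and weak compactness, while uniqueness is reduced, by linearity, to a renormalization identity for the continuity equation satisfied by $\rho\theta$.

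\textbf{Existence.} I would regularize $(b,\rho)$ by a joint space--time mollification, obtaining smooth $(b_\eps,\rho_\eps)$ which satisfy~\eqref{e:density1d} (either by invoking an Ambrosio-type regularization lemma, or by carrying a vanishing commutator as a right--hand side). For smoothed initial and boundary data, the classical method of characteristics yields a smooth solution $\theta_\eps$ of the IBVP obeying the maximum principle~\eqref{e:linftysol2}. Banach--Alaoglu gives $\theta_\eps\weaks\theta$ in $L^\infty$, and~\eqref{e:linftysol2} passes to the limit. The delicate point is the verification of the boundary condition in the weak formulation of Definition~\ref{d:ibvp}: I would rely on the continuity of the Anzellotti normal trace under the joint convergence of measure--divergence fields and their divergences, so that the normal traces of $b_\eps\rho_\eps\theta_\eps$ at $x=\alpha,\beta$ converge to those of $b\rho\theta$, and the assigned boundary data are recovered in the sense prescribed by Definition~\ref{d:ibvp}.

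\textbf{Uniqueness, one--signed case.} By linearity it suffices to show $\rho\theta=0$ when $\theta$ solves~\eqref{IVPtheta} with vanishing initial and boundary data. Adapting Panov--Gusev, I would prove the renormalization property: for every $\beta\in C^1(\R)$, $\rho\,\beta(\theta)$ is a distributional solution of the continuity equation with boundary traces $\beta(\bar\theta)$, $\beta(\theta_0)$, $\beta(0)$ on the corresponding portions of the parabolic boundary. The key intermediate step is a Lagrangian representation of $\rho\,dx\,dt$ along monotone characteristics emanating from the initial segment and from the inflow boundary, which is available in one space dimension with $b\ge 0$ because $x\mapsto\int_\alpha^x\rho(t,\cdot)$ is monotone and the direction of transport is unambiguous. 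Choosing $\beta(s)=s^2$, integrating on $]0,t[\times]\alpha,\beta[$ and using the vanishing traces yield $\int\rho\theta^2(t,\cdot)\,dx=0$ for a.e.\ $t$, whence the claim.

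\textbf{Uniqueness, sign--changing case.} The preceding disintegration is unavailable: characteristics may exit and re--enter the domain through the same boundary and $b$ can change sign on a set of positive measure, so the boundary trace of the renormalized flux cannot be controlled directly (see Remark~\ref{r:rosso}). I would set up the announced lagrangian formulation by working with the cumulative mass $M(t,x)=\int_\alpha^x\rho(t,y)\,dy$, which by~\eqref{e:density1d} satisfies $\partial_t M+b\rho=[b\rho](t,\alpha)$ in the sense of distributions, and by using the associated nondecreasing--in--$x$ change of variables to straighten the flow: in the mass coordinate $m=M(t,x)$ the characteristics of $b\rho$ become vertical lines and $\theta$ must be constant along each of them, so that its values are pinned down either by the initial datum (for lines crossing $t=0$ inside $[0,M(0,\beta)]$) or by the boundary data (for lines entering through $x=\alpha$ or $x=\beta$). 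The main difficulty, and in my view the main obstacle in the whole proof, is to make this measurable change of variables rigorous when $\rho$ is merely bounded and possibly zero on sets of positive measure: one has to handle the plateaux of $M$, identify the times at which a given mass level $m$ crosses each lateral boundary, and push the conclusion back to the original coordinates without losing information on the support of $\rho$.
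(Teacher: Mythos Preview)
Your existence outline matches the paper's approximation route (Remark~\ref{r:cicerchie}, \S\ref{ss:pthe}) but skips its main technical device: normal traces are not stable under weak$^*$ convergence of measure--divergence fields, so from $\theta_\eps\weaks\theta$ and $b_\eps\rho_\eps\to b\rho$ in $L^1$ you cannot conclude that the boundary traces of $b_\eps\rho_\eps\theta_\eps$ converge to those of $b\rho\theta$. The paper forces strong $L^1$ convergence of the traces \eqref{e:convtraccia} by extending $b\rho$ outside $[\alpha,\beta]$ by its own normal traces \eqref{e:AB} and mollifying with one--sided, anisotropic kernels \eqref{e:kernels}--\eqref{e:bienne}, so that near $x=\alpha$ the smoothed field is literally a time--convolution of $\mathrm{Tr}[b\rho](\cdot,\alpha^+)$. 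The paper says explicitly that this is the reason for the whole construction; your appeal to ``continuity of the Anzellotti normal trace'' does not cover it.

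Your sign--changing uniqueness plan is in the right spirit but uses the wrong potential. Your cumulative mass $M(t,x)=\int_\alpha^x\rho$ satisfies $\partial_tM=-b\rho-\mathrm{Tr}[b\rho](\cdot,\alpha^+)$, so along a (formal) characteristic $\dot X=b$ one gets $\tfrac{d}{dt}M(t,X(t))=-\mathrm{Tr}[b\rho](t,\alpha^+)$: the mass coordinate drifts by the boundary flux, and characteristics do \emph{not} become vertical lines unless that flux vanishes---which is exactly what fails in the sign--changing case. The paper works instead with $Q$ of \eqref{e:verde} (same $\partial_xQ=\rho$ but $\partial_tQ=-b\rho$, no boundary correction), whose level sets genuinely are characteristics. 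Since $\rho$ may vanish on sets of positive measure, these level sets need not be curves, and the bulk of \S\ref{s:main} is spent on: selecting a curve $\gamma_{\bar t,\bar x}$ inside each level set via \eqref{e:vaniglia}; building a second Lipschitz potential $\tilde Q$ transported along these curves and matching $\theta_0,\bar\theta,\underline\theta$ on the parabolic boundary (\S\ref{ss:giallo}); and proving $Q_\theta=\tilde Q$ for \emph{every} solution (Proposition~\ref{P_formula}), which immediately gives $\rho\theta_1=\partial_xQ_{\theta_1}=\partial_xQ_{\theta_2}=\rho\theta_2$. The device that links the prescribed $\bar\theta$ to $Q_\theta(\cdot,\alpha)$---the monotone envelopes $U^-(Q(\cdot,\alpha),[0,\bar t])$ and the sets $E^-_{\bar t}$ of \eqref{E_def_E^-}---is precisely what replaces the one--signed identity $Q_\theta(\cdot,\alpha)\equiv 0$, and nothing in your sketch plays that role. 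Incidentally, even in the one--signed case the paper does not disintegrate $\rho\,dx\,dt$ along characteristics: it renormalizes the Lipschitz function $Q_\theta$ (not $\theta$) by testing \eqref{e:density1d} with $z(Q_\theta)\xi$, see \S\ref{ss:gazzosa}; this avoids having to prove the renormalization property for $\theta$, which is what your $\beta(s)=s^2$ argument would require and which is not available a priori for merely bounded $b$.
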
 
Some remarks are again in order. First, the equality $\rho \theta_1 =\rho \theta_2$ is the best 
uniqueness result one can hope for, since the equation at the first line of~\eqref{IVPtheta2} does not provide any information on $\theta$ on the set where $\rho$ vanishes. Second, 
the proof of the uniqueness result heavily relies on the fact that we are focusing 
on the one-dimensional case. Note that the counterexamples in~\cite{CrippaDonadelloSpinolo}  concern zero-divergence and henceforth (it suffices to take $\rho \equiv 1$) nearly incompressible vector fields and show that, in several space dimensions, uniqueness fails as soon as the $BV$ regularity deteriorates at the domain boundary. Third, the most interesting feature of the above result is that we establish uniqueness for sign-changing coefficients $b$, as in this case the techniques developed for the Cauchy problem in~\cite{Gusev,Panov} do not extend to initial-boundary value problems. Fourth, as a byproduct of the uniqueness proof in the general case we also establish a trace renormalization result, see Theorem~\ref{t:fumetti}.
Fifth, we also have stability with respect to weak and strong convergence, see Proposition~\ref{p:caffe}, and a comparison principle given by the following result. 
\begin{corol} \label{c:comparison} Under the same assumptions as in the statement of Theorem~\ref{t:genex}, assume furthermore that $\bar \theta_a \ge \bar \theta_b$, $\underline{\theta}_a\ge \underline{\theta}_b$ and $\theta_{0a} \ge \theta_{0b}$, then the corresponding solutions of~\eqref{IVPtheta2} satisfy $\rho \theta_a \ge \rho \theta_b$ a.e. on $]0, T[ \times ]\alpha, \beta[$.
\end{corol}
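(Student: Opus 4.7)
The strategy is to combine the linearity of~\eqref{IVPtheta2} with the $L^\infty$ estimate~\eqref{e:linftysol2} and the uniqueness statement from Theorem~\ref{t:genex}, via the classical trick of comparing with a constant solution.

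First, I would observe that the equation, the initial condition, and the distributional formulation of the boundary condition (Definition~\ref{d:ibvp}, \S\ref{s:distform}) are all linear in $\theta$. Hence, if $\theta_a$ and $\theta_b$ solve~\eqref{IVPtheta2} with their respective data, then $\theta := \theta_a - \theta_b$ solves~\eqref{IVPtheta2} with data $\bar\theta := \bar\theta_a - \bar\theta_b \ge 0$, $\underline\theta := \underline\theta_a - \underline\theta_b \ge 0$, and $\theta_0 := \theta_{0a} - \theta_{0b} \ge 0$. The corollary thus reduces to the following sign property: any solution $\theta$ of~\eqref{IVPtheta2} whose boundary and initial data are nonnegative must satisfy $\rho\theta \ge 0$ a.e.

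Next, I would set $M := \max\{\|\bar\theta\|_{L^\infty},\|\underline\theta\|_{L^\infty},\|\theta_0\|_{L^\infty}\}$, so that all data of this reduced problem take values in $[0,M]$. The constant function $\theta \equiv M$ is a solution of~\eqref{IVPtheta2} with data identically equal to $M$: indeed $\partial_t(\rho M)+\partial_x(b\rho M)=M(\partial_t\rho+\partial_x(b\rho))=0$ thanks to~\eqref{e:density1d}, and the constant trivially matches the initial and boundary data $M$. By linearity, $w := M - \theta$ is then a solution of~\eqref{IVPtheta2} with data $M - \bar\theta,\ M - \underline\theta,\ M - \theta_0$, each taking values in $[0,M]$. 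The existence part of Theorem~\ref{t:genex} produces some solution $\tilde w$ of this problem with $\|\tilde w\|_{L^\infty} \le M$ by~\eqref{e:linftysol2}, while the uniqueness part forces $\rho w = \rho\tilde w$ a.e. Since $\rho \ge 0$, this gives $|\rho w| = |\rho\tilde w| \le \rho M$ a.e., i.e., $-\rho M \le \rho(M-\theta) \le \rho M$. The right-hand inequality is exactly $\rho\theta \ge 0$ a.e., that is, $\rho\theta_a \ge \rho\theta_b$ a.e.

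\textbf{Expected obstacle.} The argument is almost entirely soft, and every step is elementary once Theorem~\ref{t:genex} is in hand. The only point that deserves explicit care is the verification that the distributional boundary condition of Definition~\ref{d:ibvp} — formulated via normal traces of measure-divergence vector fields in the spirit of~\cite{AmbrosioCrippaManiglia,Anzellotti,ChenZiemerTorres} — is genuinely linear in $\theta$, so that $\theta_a - \theta_b$ and $M - \theta$ inherit the correct trace data. This is a direct consequence of the linearity of the normal trace in the underlying measure-divergence field $(\rho\theta, b\rho\theta)$, which is the natural framework set up in~\S\ref{s:distform}, but it should be invoked explicitly when checking that the two linear combinations above genuinely solve~\eqref{IVPtheta2} with the claimed boundary data.
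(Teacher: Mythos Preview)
Your argument is correct, but it follows a different route from the paper's own proof. The paper simply invokes the explicit representation formula~\eqref{E_theta_characteristics} from Proposition~\ref{P_existence}: by uniqueness, every solution coincides $\rho\Leb^2$-a.e.\ with the function that equals $\bar\theta(t_*(\gamma_{t,x}))$, $\theta_0(\gamma_{t,x}(0))$, or $\underline\theta(t_*(\gamma_{t,x}))$ depending on the region, and the comparison principle is then a pointwise consequence of the ordering of the data. Your approach instead bypasses the representation formula entirely and uses only the abstract package of linearity, the $L^\infty$ bound~\eqref{e:linftysol2}, and uniqueness. This is a genuine trade-off: the paper's proof is a one-line consequence of machinery already built in \S\ref{ss:indaco}, whereas your argument is more portable and would go through unchanged in any linear problem for which one has existence with an $L^\infty$ estimate and uniqueness up to the degeneracy set of $\rho$, without ever needing a Lagrangian description of the solution.
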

The following result establishes propagation of the $BV$ regularity. 
\begin{proposition}\label{p:the}
Under the same assumptions as in the statement of Theorem~\ref{t:genex}, 
assume furthermore that $\bar \theta, \underline \theta \in {BV}(]0,T[)$ and $\theta_0 \in {BV}(]\alpha,\beta[)$, then there is a solution $\theta$ of~\eqref{IVPtheta2}
such that
\begin{equation}\label{e:the}
\begin{split}
\mathrm{TotVar}_{]\alpha,\beta[} \theta(t,\cdot) & \le \mathrm{TotVar} \bar \theta + |\bar \theta(0^+)-\theta_0(\alpha^+)| + \mathrm{TotVar} \theta_0 + |\theta_0(\beta^-)-\underline \theta(0^+)| \\ & \quad + \mathrm{TotVar} \underline \theta 
 \qquad \text{for a.e. $t \in ]0, T[$}.
\end{split}
\eq
Also,  $\theta \in BV (]0, T[ \times ]\alpha, \beta[)$ and 
\be \label{e:caffelatte}
    |D \theta| ( ]0, T[ \times ]\alpha, \beta [)\leq C
    \Big( \| b \|_{L^\infty}, T,  \mathrm{TotVar}\ \bar \theta, |\bar \theta(0^+)-\theta_0(\alpha^+)|, \mathrm{TotVar}\ \theta_0, |\theta_0(\beta^-)-\underline \theta(0^+)| , \mathrm{TotVar} \ \underline \theta \Big),
\eq
where $|D \theta| ( ]0, T[ \times ]\alpha, \beta [)$ denotes the total variation of the distributional gradient of $\theta$.  
\end{proposition}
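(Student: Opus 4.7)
The plan is to exploit the essentially Lagrangian structure available in one space dimension. Heuristically, the solution at $(t, x)$ should equal the datum at the exit point of the backward characteristic through $(t, x)$ on the parabolic boundary $\partial_p := (\{0\} \times ]\alpha, \beta[) \cup (]0, T[ \times \{\alpha, \beta\})$. Since one-dimensional characteristics cannot cross, the map $x \mapsto (\text{exit point})$ is monotone with respect to the natural arc-length parametrization of $\partial_p$ that starts at $(T, \alpha)$, descends the left wall, traverses the initial segment, and ascends the right wall. The boundary profile thus constructed has total variation precisely equal to the right-hand side of \eqref{e:the}, and composition with a monotone map cannot increase the total variation.

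To make this rigorous I would proceed by a double approximation. First, smooth the data: pick $\theta_0^n \in C^\infty(]\alpha, \beta[)$ and $\bar\theta^n, \underline\theta^n \in C^\infty(]0, T[)$ converging in $L^1$ to $\theta_0, \bar\theta, \underline\theta$ with total variations converging to the originals and with the compatibility jumps $|\bar\theta^n(0) - \theta_0^n(\alpha)|$ and $|\theta_0^n(\beta) - \underline\theta^n(0)|$ converging to $|\bar\theta(0^+) - \theta_0(\alpha^+)|$ and $|\theta_0(\beta^-) - \underline\theta(0^+)|$. In parallel, approximate $(b, \rho)$ by a smooth nearly incompressible pair $(b_n, \rho_n)$: set $b_n := b \ast \eta_n$ and define $\rho_n$ as the classical solution of $\partial_t \rho_n + \partial_x[b_n \rho_n] = 0$ with a smooth approximation of the trace $\rho(0, \cdot)$ as initial datum. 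Stability of the continuity equation for Lipschitz fields then yields $\rho_n \to \rho$ in $L^1$.

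For each $n$ the regularized IBVP admits a classical solution $\theta_n$ represented via the flow of $b_n$. For $x_1 < x_2$ in $]\alpha, \beta[$ the backward characteristics satisfy $X(\tau; t, x_1) < X(\tau; t, x_2)$ for every $\tau$ where both are defined; a short case analysis on which of the three components of $\partial_p$ each trajectory reaches shows that the exit points respect the arc-length order. Consequently $\theta_n(t, \cdot)$ is the composition of a monotone map with the profile $\Theta_n$ built by concatenating $\bar\theta^n$ (read backwards in time along the left wall), $\theta_0^n$, and $\underline\theta^n$; the total variation of $\Theta_n$ equals the right-hand side of \eqref{e:the} with $n$-subscripts, so $\mathrm{TotVar}_{]\alpha, \beta[} \theta_n(t, \cdot) \le \mathrm{TotVar}\, \Theta_n$ for every $t$. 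From $\partial_t \theta_n + b_n \partial_x \theta_n = 0$ one obtains $|\partial_t \theta_n| \le \|b_n\|_{L^\infty} |\partial_x \theta_n|$ pointwise, and therefore $|D \theta_n| (]0, T[ \times ]\alpha, \beta[) \le (1 + \|b_n\|_{L^\infty}) T \, \mathrm{TotVar}\, \Theta_n$, which matches \eqref{e:caffelatte}.

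Finally, I would pass to the limit using the stability result in Proposition \ref{p:caffe}: up to subsequences, $\theta_n \to \theta$ in $L^1$ where $\theta$ solves \eqref{IVPtheta2} with the original data. Lower semicontinuity of the total variation against $L^1$-convergence yields \eqref{e:caffelatte}; extracting a further subsequence with $\theta_n(t, \cdot) \to \theta(t, \cdot)$ in $L^1_x$ for a.e.\ $t$ produces \eqref{e:the}. The main obstacle, in my view, is the simultaneous approximation of the nearly incompressible pair: a naive mollification creates a commutator error in \eqref{e:density1d}, so one must reconstruct $\rho_n$ as the density of $b_n$ and then identify the limit with the prescribed $\rho$ using one-dimensional uniqueness for the continuity equation. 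An alternative bypassing this difficulty is to approximate $b$ directly by piecewise constant fields, for which the Lagrangian flow is explicit and the non-crossing argument is transparent.
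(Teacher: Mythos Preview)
Your Lagrangian picture for the smooth approximations is fine and yields the spatial total-variation bound; the paper derives the same bound by noting that $|\partial_x\theta_n|$ solves the continuity equation, which is just the infinitesimal version of your monotonicity argument. The real difficulty is the approximation of $(b,\rho)$, and your proposed resolution does not work. If you set $b_n=b\ast\eta_n$ and then \emph{solve} for $\rho_n$, you lose uniform $L^\infty$ control: for smooth fields the density carries the factor $\exp\bigl(-\int_0^t\partial_x b_n\,ds\bigr)$ along characteristics, and $\|\partial_x b_n\|_{L^\infty}$ blows up like $n$. Nor can you identify the limit with the prescribed $\rho$ via ``one-dimensional uniqueness for the continuity equation'': uniqueness for the continuity equation with a merely bounded field is exactly what is at stake in this paper, so that step is circular. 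The paper instead mollifies the \emph{pair} $(\rho,b\rho)$ after extending it across $\partial\bigl(]0,T[\times]\alpha,\beta[\bigr)$, sets $\rho_n:=A\ast\gamma_n+1/n$ and $b_n:=B_n/\rho_n$; this keeps both $\rho_n$ and $b_n$ uniformly bounded, and the commutator $h_n:=\partial_t\rho_n+\partial_x[b_n\rho_n]$ tends to zero in $L^1$.

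There is a second gap in the passage to the limit. Proposition~\ref{p:caffe} is stability in the data with $(b,\rho)$ \emph{fixed}, so it does not apply when you are simultaneously varying the coefficients. To recover the boundary conditions for the limit $\theta$ one needs $b_n\rho_n(\cdot,\alpha)\to-\mathrm{Tr}[b\rho](\cdot,\alpha^+)$ and $b_n\rho_n(\cdot,\beta)\to\mathrm{Tr}[b\rho](\cdot,\beta^-)$ \emph{strongly} in $L^1(]0,T[)$; this does not follow from a generic mollification and is precisely why the paper uses anisotropic (one-sided) kernels together with the specific extension of $b\rho$ by its normal traces outside $]\alpha,\beta[$.
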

In~\eqref{e:the} and~\eqref{e:caffelatte}, $\theta_0(\alpha^+)$ and $\theta_0(\beta^-)$ denote the right and the left limit of $\theta_0$ at $\alpha$ and $\beta$, respectively: they are well defined since $\theta_0 \in BV(]\alpha, \beta[)$. We analogously define $\bar \theta (0^+)$ and $\underline \theta (0^+)$. The proof of Proposition~\ref{p:the} highly relies on the fact that the space variable is one-dimensional. Note that, as pointed out in~\cite{ColombiniLuoRauch} the propagation of the $BV$ regularity fails already in two space dimensions, see also~\cite{AlbertiCrippaMazzucato,BrueNguyen,Jabin} and~\cite{Marconi} for the autonomous case. In the case where $b$ is either nonpositive or nonnegative, we also have a $BV$-in-time regularity result. 
\begin{theorem}
\label{t:fanta}
Fix $T>0$ and a bounded, open interval $]\alpha, \beta[ \subseteq \R$ and assume that $b \in L^\infty (]0, T[ \times ]\alpha, \beta[)$ is a nearly incompressible vector field with density $\rho$. Assume furthermore that $b \ge 0$ and that  $\theta_0 \in BV (]\alpha, \beta[)$ and $\bar \theta \in BV (]0, T[)$. Then there is $\theta \in L^\infty (]0, T[ \times ]\alpha, \beta[)$ which  is a solution of~\eqref{IVPtheta} in the sense of Definition~\ref{d:ibvp} and satisfies the following. For every $x \in ]\alpha, \beta[$, there is $\tilde \theta_x  \in L^\infty (]0, T[)$ such that 
\be 
\label{e:truguale}
     \mathrm{Tr}[ b \rho \theta] (\cdot, x)  = \mathrm{Tr}[ b \rho]  (\cdot, x) \tilde \theta_x \quad \text{a.e. on $]0,T[$}
\eq 
and 
\be 
\label{e:bvatx}
 \mathrm{TotVar} \, \tilde \theta_x \leq C(  \mathrm{Tot.Var.} \,  
\theta_0,  \mathrm{Tot Var} \, \bar \theta, |\bar \theta(0^+)-\theta_0(\alpha^+)|).
\eq
Also, if $\kappa \leq \theta_0, \bar \theta \leq K$, then 
\be \label{e:traghetto}
   \kappa \leq  \tilde \theta_x  \leq K \; \text{a.e. on $]0, T[$}.
\eq
The above results extend to the extremum $x = \beta$ provided one replaces 
  $\mathrm{Tr}[ b \rho \theta] (\cdot, x)$ and $ \mathrm{Tr}[ b \rho]  (\cdot, x)$ with 
  $\mathrm{Tr}[ b \rho \theta] (\cdot, \beta^-)$  and $ \mathrm{Tr}[ b \rho]  (\cdot, \beta^-)$, respectively. 
\end{theorem}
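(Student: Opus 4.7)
The plan is to regularize the problem, establish an explicit Lagrangian formula with the desired BV-in-time bound in the smooth setting, and pass to the limit via Helly's selection theorem combined with the trace stability of Proposition~\ref{p:caffe}. I would start from the $BV$-regular solution $\theta$ produced by Proposition~\ref{p:the}, so that $\theta\in L^\infty\cap BV(]0,T[\times]\alpha,\beta[)$. For fixed $x\in]\alpha,\beta[$, both $(\rho\theta,b\rho\theta)$ and $(\rho,b\rho)$ are measure-divergence on $]0,T[\times]\alpha,\beta[$, so the lateral normal traces $F_x:=\mathrm{Tr}[b\rho\theta](\cdot,x)$ and $G_x:=\mathrm{Tr}[b\rho](\cdot,x)$ are well-defined elements of $L^\infty(]0,T[)$. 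Applying Corollary~\ref{c:comparison} to the pairs $(\theta,K)$ and $(\kappa,\theta)$ and taking normal traces gives $\kappa G_x\le F_x\le K G_x$ a.e.; in particular $F_x=0$ on $\{G_x=0\}$, and the candidate $\tilde\theta_x:=F_x/G_x\in[\kappa,K]$ on $\{G_x>0\}$, extended freely in $[\kappa,K]$ elsewhere, already yields the factorization~\eqref{e:truguale} and the $L^\infty$ bound~\eqref{e:traghetto}. The real content is the BV estimate~\eqref{e:bvatx}.

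For~\eqref{e:bvatx} I would approximate $(b,\rho)$ by smooth nearly incompressible pairs $(b^n,\rho^n)$ with $b^n\ge 0$, and the data by smooth $(\theta_0^n,\bar\theta^n)$ with total variations converging to those of $(\theta_0,\bar\theta)$. For the smooth problem the solution $\theta^n$ is transported along classical characteristics: the characteristic $s\mapsto X^n(s;t,x)$ ending at $(t,x)$ is non-decreasing in $s$ since $b^n\ge 0$, so tracing it backward it either reaches the axis $\{s=0\}$ at some $\xi^n(t)\in[\alpha,x]$, or it hits $\{x=\alpha\}$ first at some $\tau^n(t)\in[0,t]$. As $t$ varies with $x$ fixed, non-crossing of characteristics forces $\xi^n(t)$ to be monotone non-increasing, $\tau^n(t)$ monotone non-decreasing, and the two regimes are separated by a single switching time. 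Reading $\theta^n(t,x)$ as $\theta_0^n(\xi^n(t))$ or $\bar\theta^n(\tau^n(t))$ yields
\[
\mathrm{TotVar}_t\,\theta^n(\cdot,x)\le \mathrm{TotVar}\,\theta_0^n+|\bar\theta^n(0^+)-\theta_0^n(\alpha^+)|+\mathrm{TotVar}\,\bar\theta^n,
\]
uniformly in $n$; moreover $\tilde\theta_x^n:=\theta^n(\cdot,x)$ trivially satisfies $F_x^n=G_x^n\tilde\theta_x^n$ pointwise.

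Passing to the limit, Proposition~\ref{p:caffe} gives convergence $\theta^n\to\theta$ and of the smooth traces $(F_x^n,G_x^n)$ to $(F_x,G_x)$ in a weak-$*$ sense on $]0,T[$. Helly's selection theorem applied to $\tilde\theta_x^n$, uniformly bounded in $L^\infty$ and in $BV$, yields up to a subsequence a pointwise a.e. limit $\tilde\theta_x\in BV(]0,T[)$ satisfying~\eqref{e:bvatx} and~\eqref{e:traghetto}. Combining the strong convergence of $\tilde\theta_x^n$ with the weak convergence of $G_x^n$ identifies $F_x=G_x\tilde\theta_x$, giving~\eqref{e:truguale}; the case $x=\beta$ is entirely analogous using the left-sided lateral trace. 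The main obstacle I anticipate is producing the approximating pair $(b^n,\rho^n)$ so that simultaneously (i) the Lagrangian description is rigorous at each level $n$ (which rules out naive mollification and ties the regularization to the existence construction itself), and (ii) the normal-trace convergence $G_x^n\to G_x$ is strong enough to pass to the limit in the product $G_x^n\tilde\theta_x^n$; the uniform BV compactness of $\tilde\theta_x^n$ coming from the smooth step is precisely what rescues this passage, and the ambiguity of $\tilde\theta_x$ on $\{G_x=0\}$ is resolved by retaining the specific representative produced by Helly's theorem.
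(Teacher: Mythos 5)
Your proposal is correct and follows the same overall strategy as the paper: extend and mollify the pair $(\rho,b\rho)$ to get a smooth nearly incompressible pair $(\rho_n,b_n)$ with $b_n>0$ (the paper does this in~\eqref{e:AB}--\eqref{e:cucina}, adding $1/n$ to both numerator and denominator so that the quotient stays bounded and strictly positive), solve the smooth IBVP by characteristics, derive a uniform $BV$-in-time bound at fixed $x$, and pass to the limit in the trace identities via Helly's theorem. The one genuine variation is in how the key estimate $\mathrm{TotVar}_t\,\theta_n(\cdot,x)\le \mathrm{TotVar}\,\theta_0+|\bar\theta(0^+)-\theta_0(\alpha^+)|+\mathrm{TotVar}\,\bar\theta$ is obtained: you argue Lagrangianly, using that the characteristic foot map $\xi^n$ and exit-time map $\tau^n$ are monotone (a consequence of $b_n>0$ and non-crossing), while the paper argues Eulerianly, using the identity $|\partial_t\theta_n|=b_n|\partial_x\theta_n|$ and the fact that $|\partial_x\theta_n|$ solves the continuity equation, then integrating in space. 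The two arguments are equivalent manifestations of the same monotonicity and give identical constants. Two minor remarks: your opening paragraph (factorization and $L^\infty$ bound via the comparison principle) is logically sound but redundant, since both conclusions come for free once you pass to the limit in the pointwise identity $b_n\rho_n\theta_n(\cdot,x)=b_n\rho_n(\cdot,x)\,\theta_n(\cdot,x)$; and Proposition~\ref{p:caffe} concerns stability of $\rho\theta_n$ rather than of lateral traces, so the trace convergence is better justified the way the paper does it, namely directly from the integration-by-parts definition~\eqref{e:acmnt} together with the uniform $L^\infty$ bounds and the strong $L^1$ convergence of $\theta_n(\cdot,x)$ supplied by Helly.
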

Note that, in~\eqref{e:truguale}, $\mathrm{Tr}[ b \rho \theta] (\cdot, x)$ and  $\mathrm{Tr}[ b \rho]  (\cdot, x) $ denote the normal trace of the functions $b \rho \theta$ and $b \rho$, respectively, at $x$: the normal trace is rigorously defined in Remark~\ref{r:blu}. Very loosely speaking, the normal trace can be regarded as a way to define the values of $b \rho \theta$ and $b \rho$ at the point $x$, which a priori is not possible because the segment $\{ (t, x): t \in ]0, T[ \}$ is negligible. To get an heuristic idea of the meaning of~\eqref{e:truguale} one can replace  $\mathrm{Tr}[ b \rho \theta] (\cdot, x)$ and  $\mathrm{Tr}[ b \rho]  (\cdot, x) $ with the pointwise values $b \rho \theta (\cdot, x)$ and  $ b \rho  (\cdot, x) $, respectively. Note furthermore that, if $b$ changes sign, then~\eqref{e:bvatx} fails, as the counterexample discussed in \S\ref{sss:sprite} shows.   

To conclude this introduction we provide some handwaving remarks on the main ideas underpinning the proof of the main results. As pointed out before, we could not extend the techniques in~\cite{Gusev,Panov} to establish the proof of Theorem~\ref{t:genex} in the general case. Instead,   we rely  on a different approach, which is based on a ``lagrangian formulation" of the problem, albeit in a very irregular setting. This approach is inspired by recent works 
on the transport equation with highly irregular, zero divergence vector fields in two space dimensions, see in particular~\cite{AlbertiBianchiniCrippaJEMS} and also~\cite{BonicattoMarconi,Marconi}. Very loosely speaking, the basic idea is the following. Assume that $\theta$ and $b$ are both smooth functions, then the transport equation~\eqref{e:arancione} is well-defined and we can apply the classic method of characteristics: the solution $\theta$ is simply transported along the curves in the $(t, x)$ plane tangent to the vector $(1, b(t, x))$. Assume furthermore that $\rho$ is also a smooth function, and that it is bounded away from $0$. Consider the potential function $Q:[0, T] \times [\alpha, \beta] \to \R$ defined by setting 
\be \label{e:verde}
      \partial_t Q = - b \rho, \quad \partial_x Q = \rho, \quad Q(0, \alpha) =0.
\eq 
Since $\rho$ is bounded away from $0$, we can apply the Implicit Function Theorem and conclude that the level sets of $Q$ are curves in the $(t, x)$ plane tangent to the vector $(1, b(t, x))$, i.e. they are characteristic lines for~\eqref{e:arancione}. In other words, the solution $\theta$ of the transport equation~\eqref{e:arancione} is transported (and henceforth constant) along the level sets of $Q$. In a nonsmooth setting (say $b, \rho \in L^\infty$) the equation $\dot{X} = b(t, X)$ defining the characteristic curves is highly ill posed since both existence and uniqueness may fail. On the other hand, the potential function satisfying~\eqref{e:verde} is Lipschitz continuous, hence its level sets are well-defined and there is hope of showing that $\theta$ is transported, in some weak sense, along the level sets of $Q$. Note, however, that in the present paper we consider the case where $\rho$ can attain the value zero: in this case the level sets of $Q$ may have a nontrivial structure and this is why we have to rely on the more technical definition given by~\eqref{e:vaniglia}. See also \S\ref{ss:roadmap} for some further details about the main ideas underpinning the construction given in the proof of Theorem~\ref{t:genex}.

\subsection*{Outline}
The exposition is organized as follows. In \S\ref{s:overview} we recall some preliminary results, in \S\ref{s:main} we introduce our main argument and provide the proof of Theorem~\ref{t:genex}, and in \S\ref{s:main2} we establish Corollary~\ref{c:comparison} and the trace renormalization property.  In \S\ref{s:confetto} we establish the proof of Proposition~\ref{p:the} and with Proposition~\ref{p:caffe} we discuss the stability of the solution of~\eqref{IVPtheta2} . As a byproduct we obtain an alternative proof of the existence statement in Theorem~\ref{t:genex}, see Remark~\ref{r:cicerchie}. Finally, in \S\ref{s:pos} we establish the proof of Theorem~\ref{t:fanta} and in \S\ref{sss:sprite} we discuss the counterexample showing that Theorem~\ref{t:fanta} fails if $b$ changes sign. We also provide a self-contained proof of Theorem~\ref{t:genex} in the case $b\ge 0$ (or $b \leq 0$). The reader who is only interested in this case can skip \S\ref{s:main} and focus on \S\ref{s:pos}. Note however that \S\ref{s:main} is the most interesting part of the present paper. For the reader's convenience we conclude the introduction by recalling the main notation used in the present paper. 
\subsection*{Notation}
We denote by $C(a_1, \dots, a_\ell)$ a constant only depending on the quantities $a_1, \dots, a_\ell$. Its precise value can vary from occurrence to occurrence. 
\subsubsection*{General mathematical symbols} 
\begin{itemize}
\item $\R_+: =[0, + \infty[$;
\item $\Leb^d$: the $d$-dimensional Lebesgue measure;
\item $\mathrm{Tot Var} \ u$: the total variation of the function $u$;
\item a.e., for a.e. $x$: almost everywhere, for almost every $x$. Unless otherwise specified, it means with respect to the Lebesgue measure;
\item $dx$: integration with respect to the standard Lebesgue measure;
\item $f \Leb^d$, where $f \in L^1 (\R^d)$: the measure on $\R^d$ defined by setting 
$$
    f \Leb^d (E) : = \int_E f (x) dx, 
$$
for every measurable set $E \subseteq \R^d$;
\item $BV$: the space of functions with bounded total variation; 
\item $\mathrm{Tr}[B, \partial \Lambda]$: the normal trace of $B$ on $\partial \Lambda$, see Lemma~\ref{l:p32acm};
\item $\mathcal H^d$: the $d$-dimensional Hausdorff measure;
\item $f\llcorner A$: the restriction of a function $f$ to a set $A$;
\item $\mathrm{Tr}[b \rho \theta] (\cdot, \alpha^+)$, $\mathrm{Tr}[b \rho \theta] (\cdot, \beta^-)$: the normal trace of the function $b \rho \theta$ at the points $\alpha$ and $\beta$, see \S\ref{s:distform};
\item $\mathrm{Tr}[b \rho \theta](\cdot, x)$, $\mathrm{Tr}[b \rho \theta] (\cdot, x)$: the normal trace of the functions $b \rho \theta$ and $b \rho$ at the point $x \in ]\alpha, \beta[$, see Remark~\ref{r:blu};
\item $\mathrm{Tr}[(\rho\theta,\rho b \theta)](t,\gamma(t)^\pm)$: the left and right normal trace of the vector field $(\rho \theta, b \rho \theta)$ along the curve $\gamma$, see the discussion before the statement of Lemma~\ref{L_1};
\item $[\rho \theta]_0$: the initial datum of the function $\rho \theta$, see
\S\ref{s:distform};
\item $\nabla u$: the gradient of the function $u \in C^1(\R^d)$;
\item $Du$: the distributional gradient of the function $u \in L^1_{\mathrm{loc}}(\R^d)$;
\item $|\mu |$: the total variation of the measure $\mu$;
\item $U^-(f,[a,b])$, $L^+(f,[a,b])$: the upper decreasing and the lower increasing envelopes of the function $f$ on $[a, b]$, see~\eqref{e:fragola1} and~\eqref{e:fragola2};
\item $f(]a, b[):$ the image of the function $f$, i.e. 
$$
    f(]a, b[) : = \{ f(x): \; x \in ]a, b[ \}
$$
\item $\bar E$: the closure of the set $E\subseteq \R^d$ with respect to the standard Euclidean topology;
\item $(v_1, v_2)^\perp:=(-v_2, v_1)$: the vector perpendicular to the vector $(v_1, v_2)$ in $\R^2$;
\item $B_r (t, x)$: the open ball of radius $r$ and center at $(t, x)$.
\end{itemize}

\subsubsection*{Symbols introduced in the present paper}
\begin{itemize}
\item $E^-_f$: see~\eqref{e:fragola3};
\item $Q$: the potential function defined by~\eqref{e:verde};
\item $Q_\theta$: the potential function defined by~\eqref{E_def_Q};
\item $\gamma_{\bar t, \bar x}:$ the curve defined by~\eqref{e:vaniglia}; 
\item $t^*(\gamma_{\bar t, \bar x})$, $t_*(\gamma_{\bar t, \bar x})$: see~\eqref{e:cioccolato};
\item $E^-_{\bar t}$, $E^+_{\bar t}$: see~\eqref{E_def_E^-};
\item $x_\alpha (\bar t), x_\beta(\bar t)$: see~\eqref{e:gelato};
\item $t_{\min}, \bar y $: see~\eqref{e:limone}.
\item $\tilde t_*$: see~\eqref{e:bluenavy}
\end{itemize}
\section{Preliminary results} \label{s:overview}
\subsection{A regularity result for zero-divergence vector fields} \label{ss:daf}
We quote a very special case of Lemma 1.3.3 in \cite{Dafermos:book}.
\begin{lemma} \label{l:dafermos}
	Fix an interval $]a, b[ \subseteq \R$. Assume that $u, z \in L^\infty (\R_+ \times ]a, b[ )$ satisfy
	$$
	\partial_t u + \partial_x z =0.
	$$
	Then  $u$ has a representative such that the map $\R_+ \to  L^\infty (]a, b[)$, $t \mapsto u(t, \cdot)$ is continuous with respect to 
	the weak$^\ast$ topology. Also,  $z$  has a representative such that the map 
	$]a, b[  \to  L^\infty (\R_+)$, $x \mapsto z(\cdot, x)$ is continuous with respect to the weak$^\ast$ topology.
\end{lemma}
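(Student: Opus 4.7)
The plan is to reduce weak$^\ast$ continuity to a scalar statement: for each test function $\psi$, the map $t \mapsto \int u(t,x)\psi(x)\,dx$ should admit a Lipschitz representative; then, by exploiting separability of $L^1(]a,b[)$ together with the uniform $L^\infty$ bound on $u$, one can glue these scalar statements into a single representative of $u$ with values in $L^\infty(]a,b[)$ continuous in the weak$^\ast$ topology.

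First I would fix $\psi \in C^\infty_c(]a,b[)$ and $\phi \in C^\infty_c(\R_+)$, and test the distributional identity $\partial_t u + \partial_x z = 0$ against the tensor product $\phi(t)\psi(x)$, obtaining
\begin{equation*}
\int_{\R_+} \phi'(t)\,F_\psi(t)\,dt \;=\; \int_{\R_+} \phi(t)\,G_\psi(t)\,dt,
\end{equation*}
where $F_\psi(t) := \int_a^b u(t,x)\psi(x)\,dx$ and $G_\psi(t) := \int_a^b z(t,x)\psi'(x)\,dx$. This identifies $F_\psi' = G_\psi$ distributionally. Since $G_\psi \in L^\infty(\R_+)$ with $\|G_\psi\|_{L^\infty} \leq \|z\|_{L^\infty}\|\psi'\|_{L^1}$, the function $F_\psi$ belongs to $W^{1,\infty}_{\loc}(\R_+)$ and admits a unique Lipschitz representative $\tilde F_\psi$, with Lipschitz constant controlled by $\|z\|_{L^\infty}\|\psi'\|_{L^1}$.

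Next I would pick a countable family $\{\psi_n\}_n \subseteq C^\infty_c(]a,b[)$ dense in $L^1(]a,b[)$. Denoting by $N_n$ the null set outside of which $F_{\psi_n}$ coincides with $\tilde F_{\psi_n}$, and by $N_0$ the null set outside of which the slice $x \mapsto u(t,x)$ is a well-defined element of $L^\infty(]a,b[)$ with norm bounded by $\|u\|_{L^\infty}$ (provided by Fubini), I would set $N := N_0 \cup \bigcup_n N_n$ and keep the natural slice $u(t,\cdot)$ for $t \notin N$. For $t \in N$, I would define $u(t,\cdot)$ as the weak$^\ast$ limit of $u(t_k,\cdot)$ along any sequence $N^c \ni t_k \to t$: such a limit exists by weak$^\ast$ sequential compactness of the bounded ball of $L^\infty(]a,b[)$, which is metrizable in duality with the separable space $L^1(]a,b[)$, and the Lipschitz continuity of each $\tilde F_{\psi_n}$ forces the limit to be independent of the subsequence. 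A standard three-epsilon argument, combining the uniform bound on $\|u(t,\cdot)\|_{L^\infty}$ with the density of $\{\psi_n\}$ in $L^1(]a,b[)$, then promotes continuity of $t \mapsto \langle u(t,\cdot), \psi_n\rangle$ to continuity of $t \mapsto \langle u(t,\cdot), \psi\rangle$ for every $\psi \in L^1(]a,b[)$, which is precisely the required weak$^\ast$ continuity.

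The assertion for $z$ follows by the symmetric argument: after rewriting the equation as $\partial_x z + \partial_t u = 0$ and testing against tensor products $\phi(x)\psi(t)$ with $\phi \in C^\infty_c(]a,b[)$ and $\psi \in C^\infty_c(\R_+)$, the same construction produces a representative $x \mapsto z(\cdot,x)$ that is weak$^\ast$ continuous into $L^\infty(\R_+)$. The only mildly delicate point in the whole argument is extending the countably many scalar Lipschitz representatives $\tilde F_{\psi_n}$ to a single joint representative on the common exceptional set $N$; this is routine once the uniform Lipschitz bounds are in place, so I do not expect any serious obstruction beyond the bookkeeping.
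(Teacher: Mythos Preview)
Your argument is correct and is essentially the standard proof of this fact; the paper does not give its own proof but simply quotes the result from Dafermos's book (Lemma~1.3.3), and the argument there proceeds along the same lines you sketch. One cosmetic slip: from $\int \phi' F_\psi + \int \phi\, G_\psi = 0$ you should get $\int \phi' F_\psi = -\int \phi\, G_\psi$, not $+\int \phi\, G_\psi$; but this does not affect the conclusion $F_\psi' = G_\psi$ (in fact it is what yields it), nor the Lipschitz bound, so the rest stands as written.
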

\begin{remark} \label{r:pointwise}
	In the following, we always use the continuous representative of the maps $t \mapsto u(t, \cdot)$ and $x \mapsto z(\cdot, x)$. In this way, the values $u(t, \cdot)$ and $z(\cdot, x)$ are well defined \emph{for every $t$ and $x$}, respectively. 
\end{remark}
\subsection{Normal traces for measure-divergence vector fields}
\label{ss:acm}
We quote~\cite[Proposition 3.2]{AmbrosioCrippaManiglia}.
\begin{lemma}
\label{l:p32acm}
Let $\Omega \subseteq \R^d$ be an open set and
assume that the distributional divergence of the vector field $C \in L^\infty (\Omega; \R^d)$ is a locally finite Radon measure. Let $\Lambda \subseteq \R^d$ be an open set with $C^1$ boundary, compactly contained in $\Omega$. Then there is a unique function $\mathrm{Tr}[C, \partial \Lambda] \in L^\infty (\partial \Lambda)$ such that 
\be 
\label{e:acmnt}
       \int_\Lambda \mathrm{Div} C \psi dx + \int_\Lambda C \cdot \nabla \psi dx =
       \int_{\partial \Lambda} \mathrm{Tr}[C, \partial \Lambda] \psi d \mathcal H^{d-1}, \quad \text{for every $\psi \in C^\infty_c (\Omega)$} .
\eq
Also, 
$$
    \| \mathrm{Tr}[C, \partial \Lambda]  \|_{L^\infty} \leq \| C \|_{L^\infty}. 
$$
\end{lemma}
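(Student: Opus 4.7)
The plan is to construct $\mathrm{Tr}[C,\partial\Lambda]$ as a distribution on $\partial\Lambda$ by forcing~\eqref{e:acmnt} to serve as its defining relation, and then to exploit the $C^1$ regularity of $\partial\Lambda$ together with the $L^\infty$ bound on $C$ to upgrade this distribution to an $L^\infty$ function with the claimed norm estimate. The starting point is the distributional Leibniz rule
\[
\mathrm{Div}(\psi C)=\psi\,\mathrm{Div}\,C+C\cdot\nabla\psi,
\]
valid for every $\psi\in C^\infty_c(\Omega)$ as an equality of locally finite Radon measures on $\Omega$; thus the right-hand side of~\eqref{e:acmnt} equals $\int_\Lambda d[\mathrm{Div}(\psi C)]$, and the task reduces to showing that this quantity is a continuous linear functional of $\psi|_{\partial\Lambda}$ alone, with operator norm at most $\|C\|_{L^\infty}$.

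The next step is to introduce smooth cutoffs $\chi^\ee\in C^\infty_c(\Lambda)$, built from the signed distance to $\partial\Lambda$ (well-defined since $\partial\Lambda$ is $C^1$), satisfying $\chi^\ee\nearrow 1$ pointwise on $\Lambda$, with $\nabla\chi^\ee$ supported in an inner $\ee$-collar of $\partial\Lambda$, $|\nabla\chi^\ee|\Leb^d\weaks\Haus^{d-1}$ as measures concentrated on $\partial\Lambda$, and $-\nabla\chi^\ee/|\nabla\chi^\ee|$ converging to the outer unit normal. Plugging $\chi^\ee\psi\in C^\infty_c(\Omega)$ into the distributional definition of $\mathrm{Div}\,C$ yields
\[
\int\chi^\ee\psi\,d(\mathrm{Div}\,C)=-\int\chi^\ee C\cdot\nabla\psi\,dx-\int\psi\,C\cdot\nabla\chi^\ee\,dx.
\]
Letting $\ee\to 0^+$, dominated convergence (with dominating function $|\psi|$, which is in $L^1(|\mathrm{Div}\,C|)$ on the compact support of $\psi$) sends the first two terms to $\int_\Lambda\psi\,d(\mathrm{Div}\,C)$ and $-\int_\Lambda C\cdot\nabla\psi\,dx$ respectively, so the last integral has a well-defined limit $L(\psi)$ which by construction realises~\eqref{e:acmnt}. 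The pointwise bound $|C\cdot\nabla\chi^\ee|\le\|C\|_{L^\infty}|\nabla\chi^\ee|$, combined with the weak$^\ast$ limit of $|\nabla\chi^\ee|\Leb^d$, then gives $|L(\psi)|\le\|C\|_{L^\infty}\int_{\partial\Lambda}|\psi|\,d\Haus^{d-1}$.

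Finally, since restrictions to $\partial\Lambda$ of $C^\infty_c(\Omega)$ functions are dense in $L^1(\partial\Lambda)$, the functional $\psi|_{\partial\Lambda}\mapsto L(\psi)$ extends continuously to a bounded linear form on $L^1(\partial\Lambda)$, and the Riesz representation theorem identifies it with the unique $\mathrm{Tr}[C,\partial\Lambda]\in L^\infty(\partial\Lambda)$ of norm at most $\|C\|_{L^\infty}$. The main technical obstacle is the possibility that $\mathrm{Div}\,C$ carries mass on $\partial\Lambda$: approximating $\chi_\Lambda$ \emph{strictly from the inside} (as done above) prevents any such singular part from being double counted in the limit of the first integral, routing it entirely into the boundary functional $L$, in accordance with the Gauss-Green formula for measure-divergence vector fields.
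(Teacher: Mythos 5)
The paper does not prove this lemma: it is quoted verbatim from~\cite[Proposition~3.2]{AmbrosioCrippaManiglia}, so there is no in-paper proof to compare against. Your reconstruction follows the standard Anzellotti/Ambrosio--Crippa--Maniglia argument (interior cutoff approximation of $\chi_\Lambda$, passage to the limit in $\int C\cdot\nabla(\chi^\ee\psi)$, $L^\infty$ bound via the concentration of $|\nabla\chi^\ee|\Leb^d$ on the boundary, and duality $L^1(\partial\Lambda)^*=L^\infty(\partial\Lambda)$), and the structure is correct, including the crucial observation that the a priori bound $|L(\psi)|\le\|C\|_{L^\infty}\int_{\partial\Lambda}|\psi|\,d\Haus^{d-1}$ is precisely what shows the functional factors through $\psi|_{\partial\Lambda}$.

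One small technicality deserves a flag: you write $\chi^\ee\in C^\infty_c(\Lambda)$ ``built from the signed distance to $\partial\Lambda$,'' but for a boundary that is only $C^1$ (not $C^{1,1}$ or better) the signed distance is merely Lipschitz near $\partial\Lambda$, so the composite $\eta(d/\ee)$ is Lipschitz rather than smooth, and the nearest-point projection need not be single-valued. This is easily fixed --- either mollify the Lipschitz cutoff at a scale $\delta\ll\ee$ (which preserves the weak$^\ast$ limit $|\nabla\chi^\ee|\Leb^d\weaks\Haus^{d-1}\llcorner\partial\Lambda$), or replace the distance function by a $C^1$ defining function for $\partial\Lambda$ obtained from the local $C^1$ charts --- but as written the construction glosses over this point. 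The coarea computation behind $|\nabla\chi^\ee|\Leb^d\weaks\Haus^{d-1}\llcorner\partial\Lambda$ (using $|\nabla d|=1$ a.e.\ and convergence of the level sets $\{d=t\}$ to $\partial\Lambda$ in measure) is also worth making explicit, since this is where the $C^1$ regularity of $\partial\Lambda$ is genuinely used; for a rougher boundary the Minkowski content and the Hausdorff measure could disagree and the bound would have to be rephrased.
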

If $C$ is a smooth function, then the Gauss-Green formula yields $\mathrm{Tr}[C, \partial \Lambda] = C \cdot \vec n$, where $\vec n$ is the outward pointing, unit normal vector to $\partial \Lambda$. This is the reason why we term $\mathrm{Tr}[C, \partial \Lambda]$ the \emph{normal trace} of  $C$ on $\partial \Lambda$. 
\begin{remark}
\label{r:azzurro} The regularity hypotheses on the open set $\Lambda$ in the statement of Lemma~\ref{l:p32acm} can be considerably weakened. In particular, in the present work we will sistematically apply Lemma~\ref{l:p32acm} to sets with lower regularity: in each case, the extension of formula~\eqref{e:acmnt} can be achieved through an approximation argument with a sequence $\{ \Lambda_n \}$ of regular sets invading $\Lambda$. 
\end{remark}

\subsection{Distributional formulation of the initial-boundary value problem~\eqref{IVPtheta}}
\label{s:distform}
In~\cite{CCS17,CrippaDonadelloSpinolo} the definition of boundary conditions for initial-boundary value problems for transport equations in several space dimensions is provided  by relying on the theory of normal traces and in particular on Lemma~\ref{l:p32acm}. 

Fix $T >0$, $\alpha< \beta \in \R $ and consider a nearly incompressible vector field $b \in L^\infty (]0, T[ \times ]\alpha, \beta[)$ with density ${\rho \in L^\infty (]0, T[ \times ]\alpha, \beta[)}$. Assume $\theta \in L^\infty (]0, T[ \times ]\alpha, \beta[)$ satisfies 
\be
\label{e:suppin2}
    \int_0^T \int_\alpha^\beta \rho \theta (\partial_t \phi + b \partial_x \phi) dx dt =0, 
    \quad \text{for every $\phi \in C^\infty_c (]0, T[ \times ]\alpha, \beta[)$,}
\eq
 then by applying~\cite[Lemma 3.1]{CCS17} with $\Omega= ]\alpha, \beta[$, $d=1$ we get that there are unique functions $\mathrm{Tr}[b \rho \theta](\cdot, \alpha^+)$, $\mathrm{Tr}[b \rho \theta](\cdot, \beta^-)$ and 
$[\rho \theta]_0$ such that  
\be
\label{e:suppout}
    \begin{split}
    \int_0^T \! \! \int_\alpha^\beta \rho \theta (\partial_t \psi + b \partial_x \psi) dx dt & = 
    \int_0^T \! \!  \psi (t, \alpha) \mathrm{Tr} [b \rho \theta] (t, \alpha^+)  dt + \int_0^T \! \!   \psi (t, \beta) \mathrm{Tr} [b \rho \theta] (t, \beta^-)  dt \\
    & \quad -
    \int_\alpha^\beta [\rho \theta]_0 \psi(0, \cdot) dx , 
    \quad \text{for every $\psi \in C^\infty_c (]- \infty, T[ \times \R)$.}
\end{split}
\eq
Note that, if $\rho, b$ and $\theta$ are all smooth functions, then 
$$
   \mathrm{Tr} [b \rho \theta] (\cdot, \alpha^+) = - b \rho \theta (\cdot, \alpha), \qquad 
    \mathrm{Tr} [b \rho \theta] (\cdot, \beta^-) =  b \rho \theta (\cdot, \beta), \qquad  [\rho \theta]_0 = \rho \theta (0, \cdot) 
$$
\begin{remark} \label{r:carta}
Assume that $b \in L^\infty (]0, T[ \times ]\alpha, \beta[)$ is a nearly incompressible vector field with density $\rho$. Then $\theta \equiv 1$ satisfies~\eqref{e:suppin2} and hence by applying the above argument we define the functions $\mathrm{Tr} [b \rho] (\cdot, \alpha^+)$, 
   $ \mathrm{Tr} [b \rho ] (\cdot, \beta^-)$ and  $ [\rho]_0$. Note that they satisfy 
\be
\label{e:suppout2}
    \begin{split}
    \int_0^T \! \! \int_\alpha^\beta \rho (\partial_t \psi + b \partial_x \psi) dx dt & = 
    \int_0^T \! \!  \psi (t, \alpha) \mathrm{Tr} [b \rho] (t, \alpha^+)  dt + \int_0^T \! \!  \psi  (t, \beta) \mathrm{Tr} [b \rho ] (t, \beta^-)  dt \\
    & \quad -
    \int_\alpha^\beta [\rho]_0 \psi(0, \cdot) dx , 
    \quad \text{for every $\psi \in C^\infty_c (]- \infty, T[ \times \R)$.}
\end{split}
\eq
\end{remark}
\begin{remark}\label{r:blu}
Fix $x \in ]\alpha, \beta[$, then by applying~\cite[Lemma 3.1]{CCS17} to the open sets $\Omega= ]\alpha, x[$ and $]x, \beta[$ we recover the definition of the normal trace $\mathrm{Tr}[\rho b \theta](\cdot, x^-)$, $\mathrm{Tr}[\rho b \theta](\cdot, x^+)$, $\mathrm{Tr}[\rho b](\cdot, x^-)$ and $\mathrm{Tr}[\rho b](\cdot, x^+)$. By using the fact that the equations $\partial_t \rho + \partial_x [b \rho]=0$ and $\partial_t [ \rho \theta] + \partial_x [b \rho \theta]=0$ are satisfied on the whole set $]0, T[ \times ]\alpha, \beta[$ one can then  show that 
\be \label{e:blue}
     \mathrm{Tr}[\rho b \theta](\cdot, x^-)= \mathrm{Tr}[\rho b \theta](\cdot, x^+), 
     \qquad 
      \mathrm{Tr}[\rho b](\cdot, x^-)= \mathrm{Tr}[\rho b](\cdot, x^+)
     \qquad \text{a.e. on $]0, T[$}.
\eq
In the following and in the statement of Theorem~\ref{t:fanta} we denote the common values by $ \mathrm{Tr}[\rho b \theta](\cdot, x)$ and $\mathrm{Tr}[\rho b](\cdot, x)$. 
\end{remark}
The following definition is analogous to~\cite[Definition 3.3]{CCS17}. 
\begin{definition}
\label{d:ibvp}
Fix $\alpha< \beta \in \R$, $T>0$ and a nearly incompressible vector field $b \in L^\infty (]0, T[ \times ]\alpha, \beta[)$ with density $\rho \in L^\infty (]0, T[ \times ]\alpha, \beta[)$.  A distributional solution of~\eqref{IVPtheta2} is a function $\theta \in L^\infty (]0, T[ \times ]\alpha, \beta[)$ satisfying~\eqref{e:suppin2} and the equalities 
\begin{equation} \label{e:dataibvp2}
    \left.
    \begin{array}{ccc}
   \mathrm{Tr}  [b \rho \theta] (t, \alpha^+) =  
   \mathrm{Tr}  [b \rho ] (t, \alpha^+) \bar \theta (t) \quad  \text{for a.e. $t: \; \mathrm{Tr}[b \rho](t, \alpha^+)<0$,} \\ 
     \mathrm{Tr}  [b \rho \theta] (t, \beta^-) =  
   \mathrm{Tr}  [b \rho ] (t, \beta^-) \underline{\theta}(t) \quad \text{for a.e. $t: \; \mathrm{Tr}[b \rho](t, \beta^-)<0$}, \\
    {[\rho \theta]_0} (x) = [\rho]_0 (x) \theta_0 (x) \quad 
     \text{for a.e. $x \in ]\alpha, \beta[$}. 
\end{array}
\right. 
\end{equation}
A distributional solution of~\eqref{IVPtheta} is a function $\theta \in L^\infty (]0, T[ \times ]\alpha, \beta[)$ satisfying~\eqref{e:suppin2} and the equalities 
\be \label{e:dataibvp}
   \mathrm{Tr}  [b \rho \theta] (\cdot, \alpha^+) =  
   \mathrm{Tr}  [b \rho ] (\cdot, \alpha^+) \bar \theta \; \text{a.e. in $]0, T[$}, \qquad  
    [\rho \theta]_0 = [\rho]_0 \theta_0 \; \text{a.e. in $]\alpha, \beta[$}. 
\eq
\end{definition}
\subsection{Monotone envelopes}
Fix $T>0$ and let  $f: [0,T] \to \R$ be a Lipschitz continuous function. Fix $[a,b] \subseteq [0,T]$: we term 
$U^-(f,[a,b])$ the upper decreasing envelope of $f$ on $[a,b]$ defined by setting 
\begin{equation} \label{e:fragola1}
U^-(f,[a,b]):= \inf \{ g:[a,b]\to \R : g\ge f\llcorner [a,b], \mbox{ $g$ is monotone non-increasing} \}.
\end{equation}
Similarly we term $L^+(f,[a,b])$ the lower increasing envelope of $f$ on $[a,b]$ defined by setting 
\begin{equation}  \label{e:fragola2}
L^+(f,[a,b]):= \sup \{ g:[a,b]\to \R : g\le f\llcorner [a,b], \mbox{ $g$ is monotone non-decreasing} \}.
\end{equation}
The following properties are known and we report the proof for the sake of completeness. 
\begin{figure}
\centering 
\includegraphics[width=0.7\columnwidth]{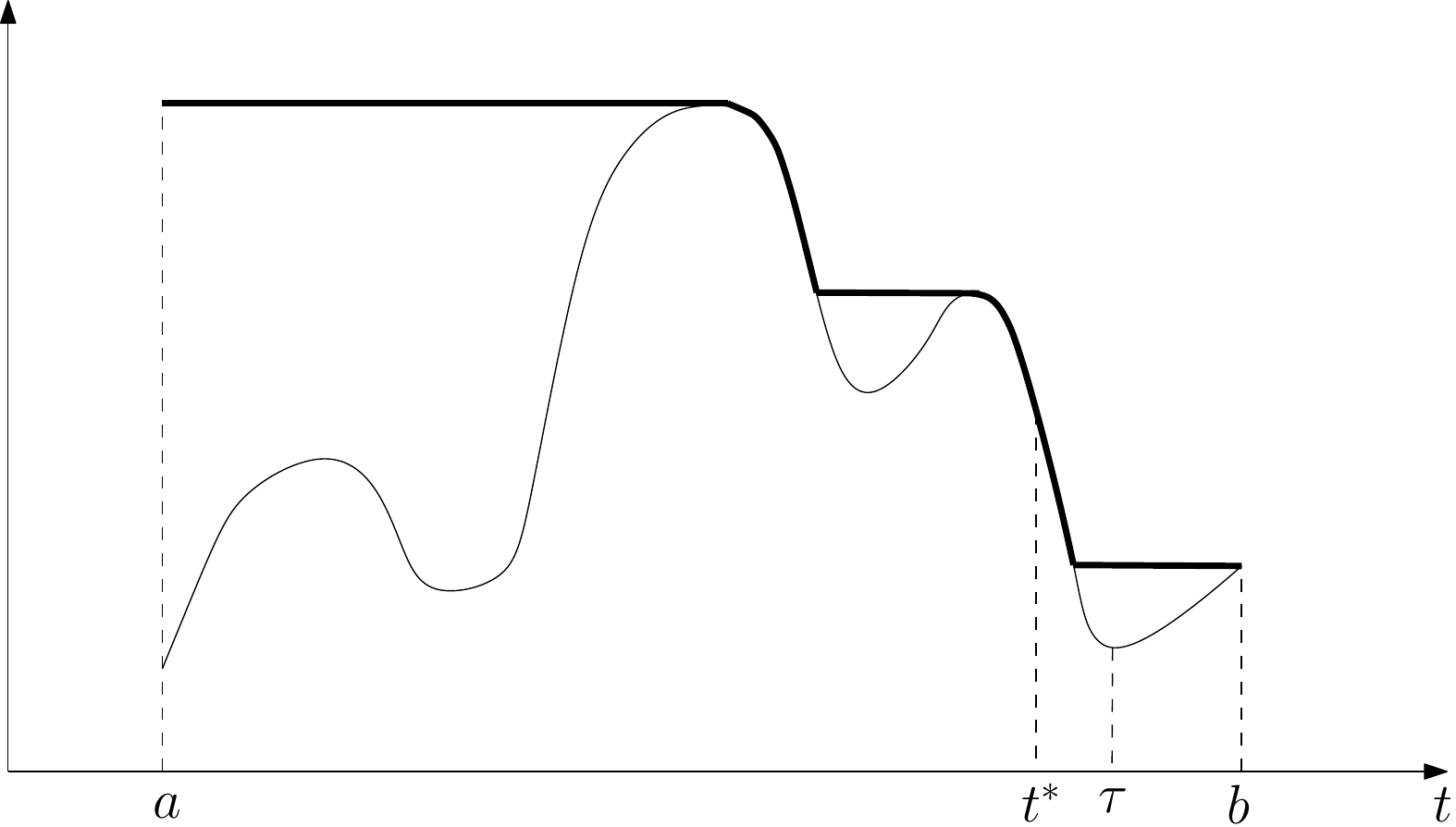}
\label{f:invilupp}
\caption{The upper decreasing envelope of the function $f$}
\end{figure}
\begin{lemma}\label{L_envelope}
Let $f: [0,T] \to \R$ be a Lipschitz continuous function  and let $U^-(f,[a,b])$ be as in~\eqref{e:fragola1} with $[a,b] \subseteq [0,T]$. Then $U^-(f,[a,b])$ is a monotone non-increasing and Lipschitz continuous function. More precisely, set
\begin{equation} \label{e:fragola3}
E^-_f ([a, b])  := \{ x \in [a,b]: f(x) = U^-(f,[a,b])(x)\},
\end{equation}
then  
\begin{equation}\label{E_derivatives_envelope}
\begin{split}
&\frac{d}{dx}U^-(f,[a,b])(x)=\frac{d}{dx}f(x) \quad \mbox{for a.e. }x \in E^-_f ([a, b]), \\
&\frac{d}{dx}U^-(f,[a,b])(x)=0 \quad \mbox{for a.e. }x \in [a,b]\setminus E^-_f([a, b]).
\end{split}
\end{equation}
Analogous statements hold for $L^+(f,[a,b])$.
\end{lemma}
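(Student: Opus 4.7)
The plan is to establish an explicit formula for $U^-(f,[a,b])$ and then to deduce the three stated properties from it by elementary arguments.

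The first step is to show the identity
\[
U^-(f,[a,b])(x) = \max \{f(y) : y \in [x,b] \}, \qquad x \in [a,b].
\]
The right-hand side is well-defined by continuity of $f$ and compactness of $[x,b]$, is monotone non-increasing, and lies above $f$; conversely every monotone non-increasing $g \ge f\llcorner[a,b]$ satisfies $g(x) \ge g(y) \ge f(y)$ for any $y \in [x,b]$, and thus dominates the right-hand side pointwise. This identifies $U^-(f,[a,b])$ with the ``right-running maximum'' of $f$. A direct estimate (comparing the point where the maximum over $[x_1,b]$ is attained either with itself on $[x_2,b]$, or with $x_2$ via the Lipschitz bound on $f$, according to whether it lies to the right or to the left of $x_2$) shows that this function is Lipschitz with constant $\mathrm{Lip}(f)$; monotonicity is immediate from the formula.

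Next I would prove~\eqref{E_derivatives_envelope} on the contact set. By Rademacher's theorem both $f$ and $U^-(f,[a,b])$ are differentiable at almost every point of $[a,b]$, so let $x_0 \in E^-_f([a,b])\,\cap\,]a,b[$ be a common point of differentiability (the two endpoints form a negligible set). The Lipschitz function $h := U^-(f,[a,b]) - f$ is everywhere nonnegative on $[a,b]$ and vanishes at $x_0$, so $x_0$ is a global minimum of $h$; the first-order optimality condition at the interior point $x_0$ then forces $h'(x_0)=0$, i.e.\ $(U^-)'(x_0)=f'(x_0)$.

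On the complement I would actually prove the stronger assertion that $U^-(f,[a,b])$ is locally constant at every $x_0 \in [a,b]\setminus E^-_f([a,b])$, so that its derivative vanishes everywhere it exists there. Set $M := U^-(f,[a,b])(x_0) > f(x_0)$ and choose $\delta>0$ with $f<M$ on $]x_0-\delta, x_0+\delta[$; by compactness the maximum defining $U^-(x_0)$ is attained at some $y^\ast \ge x_0+\delta$. For every $x \in \, ]x_0-\delta, x_0+\delta[\,\cap [a,b]$ we have $y^\ast \in [x,b]$, hence $U^-(x) \ge f(y^\ast) = M$; combined with the non-increasing monotonicity this forces $U^- \equiv M$ on this neighborhood. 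The corresponding statements for $L^+(f,[a,b])$ follow at once by applying the argument to $-f$ and using $L^+(f,[a,b]) = -\,U^-(-f,[a,b])$. The only moderately delicate step is the identification of $U^-(f,[a,b])$ with the right-running maximum; once this is in hand, everything reduces to first-order reasoning and Rademacher's theorem, so I do not expect any substantive obstacle.
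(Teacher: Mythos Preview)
Your approach is correct and takes a genuinely different route from the paper. You begin by identifying $U^-(f,[a,b])$ with the right-running maximum $x \mapsto \max_{[x,b]} f$, a formula the paper never makes explicit; from it all the claimed properties follow by direct inspection. The paper instead works throughout with the abstract definition as an infimum over competitors: Lipschitz continuity is obtained by exhibiting the competitor $g(x) = \max\{U^-(x_2),\, U^-(x_2) - L(x-x_2)\}$, and local constancy off $E^-_f$ by exhibiting a competitor flattened on a small interval. Your argument on $E^-_f$ via first-order optimality at an interior minimum of $U^- - f$ is also cleaner than the paper's, which restricts to accumulation points of $E^-_f$ and then observes that isolated points form a negligible set. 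The explicit formula buys you a concrete object to compute with; the competitor method is more agnostic about the structure of $f$, but here both work equally well.

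One small point to tighten: in your local-constancy step near $x_0 \notin E^-_f$, you establish $U^-(x) \ge M$ on the whole neighborhood and then appeal to ``non-increasing monotonicity'' to conclude $U^- \equiv M$. Monotonicity gives $U^-(x) \le M$ only for $x \ge x_0$; for $x \in\,]x_0-\delta, x_0[$ you need the additional (immediate) observation that the points added to $[x,b]$ as compared with $[x_0,b]$ all lie in the region where $f < M$, so $\max_{[x,b]} f = \max_{[x_0,b]} f = M$. This follows directly from your choice of $\delta$, but monotonicity alone does not cover it.
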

\begin{proof}
We only establish the claims concerning $U^-(f,[a,b])$, those for $L^+(f,[a,b])$ can be recovered by observing that $L^+(f,[a,b])= - U^-(-f, [a, b])$.
Also, to simplify the notation we write $E^-_f$ instead of $E^-_f ([a, b])$. 
 \\
{\sc Step 1:} the function $U^-(f,[a,b])$ is monotone non-increasing because it is the infimum of monotone non-increasing functions. We now establish the Lipschitz regularity of $U^-(f,[a,b])$. We denote by $L$ the Lipschitz constant of $f$ and we fix $x_1<x_2$ in $[a,b]$. Since  $f$ is a $L$-Lipschitz continuous function and $U^-(f,[a,b])\ge f$,
 then the function $g(x):= \max\{ U^-(f,[a,b])(x_2), U^-(f,[a,b])(x_2) - L (x-x_2)\}$ is monotone 
non-increasing and satisfies $g \ge f$. 
This yields $U^-(f,[a,b])(x) \le g(x)$ for every $x \in [a,b]$, and in particular $U^-(f,[a,b])(x_1) \le U^-(f,[a,b])(x_2) + L(x_2-x_1)$. 
Since $U^-(f,[a,b])$ is a monotone non-increasing function, this yields
\begin{equation*}
|U^-(f,[a,b])(x_2)-U^-(f,[a,b])(x_1)|\le L|x_2-x_1|
\end{equation*}
and shows that $U^-(f,[a,b])$ is a $L$-Lipschitz continuous function on $[a,b]$.

We now establish the first equation in~\eqref{E_derivatives_envelope}. We point out that the first equation in~\eqref{E_derivatives_envelope} is satisfied at every accumulation point of $E^-_f$ where both $f$ and  $U^-(f,[a,b])$ are differentiable. Since the set of points that are not accumulation points of $E^-_f$ is negligible, this establishes the first equation in~\eqref{E_derivatives_envelope}.  

We now establish the second equation in \eqref{E_derivatives_envelope}. More precisely, we
show that, if $]x_1,x_2[\subseteq [a,b]\setminus E^-_f$, then $U^-(f,[a,b])$ is constant on $]x_1,x_2[$.
It suffices to show that, for every given $x\in ]x_1,x_2[$, there is $\varepsilon>0$ such that $U^-(f,[a,b])$ is constant on $]x-\varepsilon,x+\varepsilon[$. To this end, we recall that $f$ and $U^-(f,[a,b])$ are both $L$-Lipschitz continous functions and set  
\begin{equation*}
\varepsilon := \frac{U^-(f,[a,b])(x)-f(x)}{2L},
\end{equation*}
then $ U^-(f,[a,b])(x+\varepsilon) \ge f(y)$ for every $y \in ]x -\varepsilon,x+\varepsilon[$. This implies that the function $g:[a, b] \to \R$ defined by setting
\begin{equation*}
g(y): = \begin{cases}
 U^-(f,[a,b])(x+\varepsilon) & \mbox{if }y \in  ]x -\varepsilon,x+\varepsilon[, \\
  U^-(f,[a,b])(y) & \mbox{elsewhere on $[a, b]$}
\end{cases}
\end{equation*}
is a monotone non-increasing function satisfying $g \ge f$ and hence $g\ge U^-(f,[a,b])$. Since $U^-(f,[a,b])$ is a monotone non-increasing function, then 
$U^-(f,[a,b])(y)=U^-(f,[a,b])(x+\varepsilon)$ for every $y \in ]x -\varepsilon,x+\varepsilon[$ and this concludes the proof.
\end{proof} 
\begin{lemma} \label{l:cialda}
Under the same assumptions as in Lemma~\ref{L_envelope}, fix $t_* \in E^-_f ([a, b])$, see Figure~\ref{f:invilupp}. Then for every $\tau \in [t_*, b]$ we have 
\be \label{e:cialda}
      U^-(f, [a, b]) (t) =  U^-(f, [a, \tau ]) (t), \quad \text{for every $t \in [a, t_*]$.} 
\eq 
\end{lemma}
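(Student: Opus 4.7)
My strategy is to replace $U^-(f,\cdot)$ by an explicit tail-supremum formula. I would first establish that, for every $[c,d]\subseteq [0,T]$ and every $t \in [c,d]$,
\[
U^-(f,[c,d])(t) = \sup_{s \in [t,d]} f(s).
\]
The inequality $\le$ is immediate: the right-hand side, viewed as a function of $t$, is monotone non-increasing (the sup ranges over a shrinking set) and pointwise $\ge f$ (take $s=t$), so it belongs to the family whose infimum defines $U^-(f,[c,d])$ in \eqref{e:fragola1}. For the reverse inequality, any monotone non-increasing $g:[c,d]\to\R$ with $g \ge f\llcorner [c,d]$ satisfies $g(t) \ge g(s) \ge f(s)$ for every $s \in [t,d]$, hence $g(t)\ge \sup_{s\in[t,d]} f(s)$; taking the infimum over such $g$ yields the claim.

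Next I would translate the hypothesis $t_* \in E^-_f([a,b])$ through this formula. Since $U^-(f,[a,b])(t_*)=f(t_*)$, the identity just proved gives
\[
\sup_{s \in [t_*,b]} f(s) = f(t_*),
\]
i.e. $f(s)\le f(t_*)$ for every $s \in [t_*,b]$.

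Finally, fix $\tau \in [t_*, b]$ and $t \in [a, t_*]$, so that $t \le t_* \le \tau \le b$. Splitting $[t,b]=[t,\tau]\cup[\tau,b]$,
\[
U^-(f,[a,b])(t)=\sup_{s\in[t,b]} f(s)
=\max\Bigl\{\sup_{s\in[t,\tau]} f(s),\; \sup_{s\in[\tau,b]} f(s)\Bigr\}.
\]
Since $[\tau,b]\subseteq [t_*,b]$, the second supremum is $\le f(t_*)$ by the previous step; since $t_* \in [t,\tau]$, the first supremum is $\ge f(t_*)$. Hence the maximum equals $\sup_{s\in[t,\tau]} f(s)=U^-(f,[a,\tau])(t)$, which is exactly \eqref{e:cialda}. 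I do not foresee any substantive obstacle: the whole argument reduces to the tail-supremum representation, after which the conclusion is an elementary comparison of suprema on nested intervals.
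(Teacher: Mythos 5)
Your proof is correct. The tail-supremum identity $U^-(f,[c,d])(t)=\sup_{s\in[t,d]}f(s)$ is established rigorously in both directions, the translation of $t_*\in E^-_f([a,b])$ into the pointwise bound $f\le f(t_*)$ on $[t_*,b]$ is right, and the final comparison of the two suprema is elementary and airtight.

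The route is genuinely different from the paper's. The paper never writes down a closed formula for the envelope: it first shows $t_*\in E^-_f([a,\tau])$ by squeezing ($f\le U^-(f,[a,\tau])\le U^-(f,[a,b])$ on $[a,\tau]$, then using $f(t_*)=U^-(f,[a,b])(t_*)$), and then constructs an auxiliary function $g$ by gluing $U^-(f,[a,\tau])$ on $[a,t_*]$ to $U^-(f,[a,b])$ on $]t_*,b]$ -- the contact at $t_*$ makes $g$ non-increasing and $\ge f$, whence $g\ge U^-(f,[a,b])$, which closes the squeeze. Your approach front-loads the work into the explicit representation of the envelope, after which everything reduces to comparing suprema on nested intervals; it is more concrete and arguably more transparent, and it makes manifest the (useful) fact that $U^-(f,[c,d])(t)$ depends only on $f\llcorner[t,d]$, not on the left endpoint $c$. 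The paper's argument stays entirely at the level of the variational definition \eqref{e:fragola1} and is perhaps more in the spirit of how envelopes are manipulated elsewhere in the paper (e.g.\ in the proof of Lemma~\ref{L_envelope}), but it requires spotting the gluing construction. Either proof is acceptable; yours has the side benefit of delivering the explicit formula, which could be reused.
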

\begin{proof}
We fix $\tau \in [t_\ast, b]$ and proceed according to the following steps.\\
{\sc Step 1:} we show that $t_\ast \in E^-_f ([a, \tau])$.  Since $U^-(f, [a, b])$ is a monotone 
non-increasing function such that $U^-(f, [a, b]) \ge f$ on $[a, \tau] \subseteq [a, b]$, then 
\be \label{e:cialda2}
    f \leq U^-(f, [a, \tau]) \leq U^-(f, [a, b])  \quad \text{on $[0, \tau]$.}
\eq Since $f(t_*) = U^-(f, [a, b])(t_*)$, then $f(t_*) = U^-(f, [a, \tau])(t_*)$. \\
{\sc Step 2:} we conclude the proof. We consider the function $g: [a, b] \to  \R$ defined by setting
$$
    g(x) : = \left\{ 
    \begin{array}{ll}
     U^-(f, [a, \tau])(t) & t \in [a, t_*] \\
     U^- (f, [a, b])(t) & t \in ]t_*, b ] \\
    \end{array}
    \right.
$$
Since $  U^-(f, [a, b])(t_\ast)=f(t_\ast) =   U^-(f, [a,\tau])(t_\ast)$, then $g$ is a monotone non-increasing function satisfying $g \ge f$ on $[a, b]$ and hence 
$g \ge  U^-(f, [a, b])$. Owing to~\eqref{e:cialda2}, this yields~\eqref{e:cialda}.  
\end{proof}
\subsection{An elementary lemma}
We need the following well-known result. We provide the proof for the sake of completeness. 
\begin{lemma}\label{l:pastiera}
Let $[a, b] \subseteq \R$ be a bounded interval. Assume that $\ell, f: [a, b] \to \R$ are two functions such that $f$ is continuous and 
\be \label{e:pastiera}
     |\ell (x_1) - \ell (x_2) | \leq L |f(x_1) - f(x_2)|, \quad \text{for every $x_1, x_2 \in [a, b]$} 
\eq
and for some constant $L>0$. Then there is a $L$-Lipschitz continuous function $g: f([a, b]) \to \R$ such that $\ell = g \circ f$. 
\end{lemma}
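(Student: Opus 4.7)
The plan is a direct factoring argument: I would first show that $\ell$ descends set-theoretically through $f$, and then transfer the hypothesis~\eqref{e:pastiera} to the factored map to get Lipschitz continuity.

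First I would note that whenever $f(x_1)=f(x_2)$, the hypothesis~\eqref{e:pastiera} forces $\ell(x_1)=\ell(x_2)$. Hence the relation $g(y):=\ell(x)$, for any $x\in[a,b]$ with $f(x)=y$, unambiguously defines a function $g:f([a,b])\to\R$, and by construction $\ell=g\circ f$ on $[a,b]$. Next, for any two points $y_1,y_2\in f([a,b])$, choose preimages $x_1,x_2\in[a,b]$ with $f(x_i)=y_i$; applying~\eqref{e:pastiera} gives
\be \label{e:lemma_plan_lip}
   |g(y_1)-g(y_2)| = |\ell(x_1)-\ell(x_2)|\leq L|f(x_1)-f(x_2)| = L|y_1-y_2|,
\eq
so $g$ is $L$-Lipschitz on $f([a,b])$, which is an interval since $f$ is continuous on the connected set $[a,b]$.

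There is really no main obstacle here: the heart of the argument is the observation that~\eqref{e:pastiera} with $f(x_1)=f(x_2)$ yields well-definedness, and the same inequality, read on the image side, yields the Lipschitz bound. The only minor point worth remarking on is that continuity of $f$ ensures $f([a,b])$ is a (possibly degenerate) interval, which is the natural domain on which to speak of the Lipschitz constant of $g$; if one cared to extend $g$ to all of $\R$ one could invoke Kirszbraun/McShane, but this is not needed for the statement.
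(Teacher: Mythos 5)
Your proof is correct and takes essentially the same route as the paper. The only cosmetic difference is that the paper sidesteps the explicit well-definedness step by choosing a canonical preimage, setting $g(y):=\ell\bigl(\min\{x\in[a,b]:f(x)=y\}\bigr)$ (the minimum exists by continuity of $f$ on the compact $[a,b]$), whereas you observe directly that~\eqref{e:pastiera} with $f(x_1)=f(x_2)$ forces $\ell(x_1)=\ell(x_2)$, so that any choice of preimage gives the same value; the Lipschitz estimate is then verified identically in both arguments.
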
 
\begin{proof}
We set 
\be \label{e:pastiera2}
      g(y) : = \ell \big( \min \{ x \in [a, b]: \; f(x) = y \} \big). 
\eq
To verify that $\ell = g \circ f$, we fix $\xi \in [a, b]$ and point out that 
$$
    g\circ f (\xi) \stackrel{\eqref{e:pastiera2}}{=}
     \ell \big( \min \{ x \in [a, b]: \; f(x) = f(\xi) \} \big) \stackrel{\eqref{e:pastiera}}{=}
    \ell (\xi).
$$
To establish the Lipschitz continuity, we fix $y_1, y_2  \in f([a, b])$ and set 
$$
    x_1 : = \min \{ x \in [a, b]: \; f(x) = y_1 \}, \qquad 
      x_2 : = \min \{ x \in [a, b]: \; f(x) = y_2 \}. 
$$
Then
\begin{equation*}
\begin{split}
    |g(y_1) - g(y_2)| & \stackrel{\eqref{e:pastiera2}}{=}
    \big| \ell(x_1) - \ell(x_2) \big| 
     \stackrel{\eqref{e:pastiera}}{\leq} L |f(x_1) - f(x_2)| = L |y_1 - y_2|. \qedhere
\end{split}
\end{equation*}
\end{proof}

\section{Proof of Theorem~\ref{t:genex}} \label{s:main}
\subsection{Proof roadmap and outline} \label{ss:roadmap}
Since the proof of Theorem~\ref{t:genex} is rather involved, we first provide some insights about the most important ideas underpinning the main argument. As we mentioned at the end of the introduction, the starting point is the intuition that $\theta$ should be transported, in some weak sense, along the level sets of the potential function $Q$ defined by~\eqref{e:verde}, which as a first approximation should act as ``characteristic curves". Since $\rho$ can attain the value $0$, in general the level sets of $Q$ are not curves and may have a more complicated structure. However, in~\eqref{e:vaniglia} we define the curve $\gamma_{t, x}$ passing through the point $(t, x)$ and this provides a weak notion of ``characteristic curve". We explain in Remark~\ref{r:vaniglia} the heuristic behind~\eqref{e:vaniglia}, here we just point out that, if $\rho$ is bounded away from $0$, then~\eqref{e:vaniglia} defines exactly the level sets of $Q$. Once we have introduced a weak notion of characteristic curve, we have to give a rigorous meaning to the intuition that $\theta$ is constant along the characteristic curves. Since the pointwise values of $\theta$ are not well-defined, we rather look for some more regular function 
which should be constant along our weak characteristic curves. Consider the Lipschitz continuous potential function $Q_\theta: [0, T] \times [\alpha, \beta] \to \R$ defined by setting 
\begin{equation}\label{E_def_Q}
\partial_x Q_\theta = \rho \theta, \qquad \partial_t Q_\theta = -\rho b \theta, \qquad \mbox{and} \qquad Q(0,\alpha)=0.
\end{equation}
In the smooth setting, $Q_\theta$ is constant along the characteristic curves with slope $(1,b)$, i.e. on the level sets of $Q$. One of the outcomes 
of our construction is that $Q_\theta$ is actually constant along the weak characteristic curves $\gamma_{t,x}$. To see this, we rely on a rather indirect argument: first, in \S\ref{ss:giallo} we construct a Lipschitz continuous function $\tilde Q$ which is i) transported along the weak characteristic curves and ii) satisfies the equality $\partial_x \tilde Q= \rho \theta$ at $t=0$ and the equality $\partial_t \tilde Q = - \rho b \theta$ on the part of the boundary where the boundary condition is assigned. Next, we 
recover from $\tilde Q$ a solution of the initial-boundary value problem~\eqref{IVPtheta2} and this establishes the existence part of Theorem~\ref{t:genex}, see Proposition~\ref{P_existence}. Finally, we show that, if $\theta$ is a solution of the initial-boundary value problem~\eqref{IVPtheta2} and $Q_\theta$ is defined by~\eqref{E_def_Q}, then $\tilde Q= Q_\theta$, see Proposition~\ref{P_formula}, and from this the uniqueness statement in Theorem~\ref{t:genex} quite easily follows, see Corollary~\ref{c:bigne}.  To conclude, let us briefly explain how we recover the function $\theta$. What we actually do is we prove that locally in space time $\tilde Q = g \circ Q$ for some suitable Lipschitz continuous function $g: I \subset \R \to \R$, see Lemma~\ref{L_tildeQ} for the exact statement. Next, we show that the function $g'\circ Q$ provides a solution of~\eqref{IVPtheta2}. To understand why this is true one should  keep in mind that, as a matter of fact, $\tilde Q= Q_\theta$ and hence by comparing~\eqref{e:verde} and~\eqref{E_def_Q} we get $\partial_x \tilde Q = \theta \partial_x Q$ and, owing to the equality $\tilde Q = g \circ Q$, this yields $\theta \partial_x Q= g' (Q) \partial_x Q$.

The exposition is organized as follows. In \S~\ref{ss:Q} we discuss some properties of the functions $Q$ and $Q_\theta$.  In~\S\ref{ss:marrone} we introduce our weak notion of characteristic curve and establish its main properties. In \S\ref{ss:giallo} we introduce the above mentioned function $\tilde Q$ and study its main properties, in~\S\ref{ss:indaco} we establish the existence part of Theorem~\ref{t:genex} and in \S\ref{ss:lavanda} the uniqueness. 

To conclude, we point out that to simplify some points of the argument in this section and in the following one we always assume
 \begin{equation}\label{E_ass}
T \le \bar T:=\frac{\beta - \alpha}{2\|b\|_{L^\infty}}.
\end{equation}
Up to repeating the argument on each time interval of the form $[k \bar T, (k+1)\bar T]$ with $k\in \mathbb{N}$, this is not restrictive.
\subsection{The potential function $Q$}
\label{ss:Q}
We fix $\theta \in  L^\infty (]0, T[ \times ]\alpha, \beta[)$ solving \eqref{e:suppin2} and we denote by 
$Q$ and $Q_\theta$  the Lipschitz continuous potential functions defined by~\eqref{e:verde} and~\eqref{E_def_Q}, respectively. Let $t^-, t^+ \in ]0, T[$ and consider a Lipschitz continuous curve $\gamma:[t^-,t^+] \to ]\alpha,\beta[$. We
now want to define the left and right normal traces 
$\mathrm{Tr}[(\rho\theta,\rho b \theta)](t,\gamma(t)^-)$ and $\mathrm{Tr}[(\rho\theta,\rho b \theta)](t,\gamma(t)^+)$. To this end, we fix $\bar x \in ]\alpha, \inf \gamma [$, we  set 
\be
\label{e:pistacchio}
       \Lambda: = \{ (t, x):~\; t \in ]t^-, t^+[, \; 
        \bar x < x < \gamma (t) \},
\eq
we apply Lemma~\ref{l:p32acm} with $\Omega= ]0, T[ \times ]\alpha, \beta[$, $C= (\rho \theta, b \rho \theta)$
and we denote the normal trace $\mathrm{Tr}[C, \partial \Lambda](t)$ by $\mathrm{Tr}[(\rho\theta,\rho b \theta)](t,\gamma(t)^-)$. To define $\mathrm{Tr}[(\rho\theta,\rho b \theta)](t,\gamma(t)^+)$ we fix $\underline x \in ] \sup \gamma, \beta[$ and we apply Lemma~\ref{l:p32acm} with  $\Lambda: = \{ (t, x):~\; t \in ]t^-, t^+[, \; \gamma (t) x < x < \underline x \}$.   
\begin{lemma}\label{L_1}
	Let $t^-, t^+ \in ]0, T[$ and $\gamma:[t^-,t^+] \to ]\alpha,\beta[$ as before.  
	Then 
	\begin{equation}\label{E_trace_Q_theta}
	\frac{d}{dt} Q_\theta(t,\gamma(t))= \pm \sqrt{1+\dot\gamma(t)^2} \mathrm{Tr}[(\rho\theta,\rho b \theta)](t,\gamma(t)^\pm), \;  \text{for a.e. $t \in ]t^-, t^+[$}.
	\end{equation}
\end{lemma}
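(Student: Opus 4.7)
My plan is to identify the distributional derivative of the Lipschitz function $t\mapsto Q_\theta(t,\gamma(t))$ by pairing it against an arbitrary test function $\eta\in C^\infty_c(]t^-,t^+[)$ and matching the result with the normal trace integral via~\eqref{e:acmnt}. First I would observe that since $\rho\theta$ and $b\rho\theta$ lie in $L^\infty$, the potential $Q_\theta$ defined by~\eqref{E_def_Q} is Lipschitz continuous on $[0,T]\times[\alpha,\beta]$; in particular $t\mapsto Q_\theta(t,\gamma(t))$ is Lipschitz and a.e.\ differentiable. Moreover,~\eqref{e:suppin2} says exactly that the vector field $C:=(\rho\theta,b\rho\theta)\in L^\infty(\Omega;\R^2)$, with $\Omega:=]0,T[\times]\alpha,\beta[$, is divergence-free, so Lemma~\ref{l:p32acm} (extended to Lipschitz domains via Remark~\ref{r:azzurro}) applies to the domain $\Lambda$ from~\eqref{e:pistacchio} and yields a well-defined normal trace whose restriction to the curve $(t,\gamma(t))$ is precisely $\mathrm{Tr}[(\rho\theta,\rho b\theta)](\cdot,\gamma(\cdot)^-)$.

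The core step is to apply~\eqref{e:acmnt} (which for $\mathrm{Div}\,C=0$ reduces to $\int_\Lambda C\cdot\nabla\psi\,dt\,dx=\int_{\partial\Lambda}\mathrm{Tr}[C,\partial\Lambda]\,\psi\,d\mathcal{H}^1$) to a product test function $\psi(t,x):=\eta(t)\chi(x)$, where $\chi\in C^\infty_c(]\bar x,\beta[)$ is chosen so that $\chi\equiv 1$ on an open neighborhood of the compact set $\gamma([t^-,t^+])$. With this choice $\psi$ vanishes on the horizontal pieces $\{t=t^\pm\}$ of $\partial\Lambda$ and on the left vertical piece $\{x=\bar x\}$, while $\psi(t,\gamma(t))=\eta(t)$ along the slanted piece; therefore the right-hand side of~\eqref{e:acmnt} collapses to $\int_{t^-}^{t^+}\sqrt{1+\dot\gamma(t)^2}\,\mathrm{Tr}[C](t,\gamma(t)^-)\,\eta(t)\,dt$. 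On the left-hand side, Fubini together with an integration by parts in $x$ using $\partial_x Q_\theta=\rho\theta$ would rewrite the first summand as $\int\dot\eta(t)\bigl[Q_\theta(t,\gamma(t))-F(t)\bigr]dt$ with $F(t):=\int_{\bar x}^{\gamma(t)}Q_\theta(t,x)\chi'(x)\,dx$, while the second summand equals $\int\eta(t)G(t)\,dt$ with $G(t):=\int_{\bar x}^{\gamma(t)}b\rho\theta(t,x)\chi'(x)\,dx$.

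The main obstacle, and the step that I expect to require the most care, is handling the spurious $F$ and $G$ contributions. The key observation is that, by the choice of $\chi$, the derivative $\chi'$ vanishes in an open neighborhood of $\gamma([t^-,t^+])$, so the integrands in $F(t)$ and $G(t)$ have supports contained in a fixed interval $[\bar x,a_1]$ that does \emph{not} depend on $t$. Consequently $F$ is Lipschitz in $t$ with $\dot F(t)=\int_{\bar x}^{a_1}\partial_t Q_\theta(t,x)\chi'(x)\,dx=-G(t)$, using $\partial_t Q_\theta=-b\rho\theta$; integrating by parts in the single variable $t$ (and exploiting that $\eta$ has compact support) gives $\int\dot\eta\,F\,dt=-\int\eta\,\dot F\,dt=\int\eta\,G\,dt$. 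Substituting this identity back, the left-hand side of~\eqref{e:acmnt} collapses to $\int\dot\eta(t)\,Q_\theta(t,\gamma(t))\,dt=-\int\eta(t)\tfrac{d}{dt}Q_\theta(t,\gamma(t))\,dt$.

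Matching the two sides and invoking the arbitrariness of $\eta$ then yields~\eqref{E_trace_Q_theta} with the $-$ sign and the left trace $\mathrm{Tr}[(\rho\theta,\rho b\theta)](t,\gamma(t)^-)$. The $+$ case follows by repeating the same argument on the mirror domain $\{(t,x):t\in]t^-,t^+[,\ \gamma(t)<x<\underline x\}$ on the opposite side of $\gamma$, whose outward unit normal along $\gamma$ points in the opposite direction and therefore flips the overall sign in~\eqref{E_trace_Q_theta}.
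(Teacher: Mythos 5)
Your proof is correct, and it takes a genuinely different route from the paper's. Both arguments apply the normal trace identity~\eqref{e:acmnt} with $C=(\rho\theta,b\rho\theta)$ on the slanted region $\Lambda$ of~\eqref{e:pistacchio}, but the test function is chosen differently. The paper takes $\psi\equiv 1$ on $\Lambda$, which kills the interior term $\int_\Lambda C\cdot\nabla\psi$ but forces one to evaluate the normal trace on \emph{every} piece of $\partial\Lambda$; this is handled by the trace identities~\eqref{e:tiramisu}, which only hold for a.e.\ choices of $t_1,t_2,\bar x$ (a Lebesgue-point/approximation argument), and similarly the pointwise derivative identities~\eqref{e:crostata} must be selected to hold simultaneously. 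You instead choose a product test function $\psi=\eta(t)\chi(x)$ with compact supports so that $\psi$ vanishes identically on the horizontal and left-vertical pieces of $\partial\Lambda$, making those traces irrelevant; the price is that the interior term $\int_\Lambda C\cdot\nabla\psi$ is now nontrivial, and you must cancel the spurious boundary contributions at $x$-level via the identity $\dot F=-G$ (which uses $\partial_t Q_\theta=-b\rho\theta$ a.e.\ under the integral, justified by dominated convergence on the difference quotients of the Lipschitz function $Q_\theta$) followed by an integration by parts in $t$. Your approach avoids the a.e.-selection of parameters entirely; the paper's approach avoids the integration-by-parts bookkeeping and the $F,G$ cancellation. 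Both are equally valid, and both deliver the $\pm$ sign correctly through the orientation of the outward unit normal along $\gamma$.
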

\begin{proof}
	We establish \eqref{E_trace_Q_theta} for $\mathrm{Tr}[(\rho\theta,\rho b \theta)](t,\gamma(t)^-)$, the proof for 
   $\mathrm{Tr}[(\rho\theta,\rho b \theta)](t,\gamma(t)^+)$ is analogous.
	We fix $t_1<t_2$ in $]t^-,t^+[$, $\bar x \in ]\alpha, \inf \gamma[$ and consider the set~\eqref{e:pistacchio} 
and the vector field $C: = (\rho \theta, b \rho \theta)$. By relying on a standard approximation argument one can show that, for a.e. $t_1, t_2 \in ]t^-, t^+[$ and 
$\bar x \in ]\alpha, \inf \gamma[$ we have the equalities 
\be \label{e:tiramisu}
     \left.\mathrm{Tr} [C, \partial \Lambda] \right|_{t=t_1} = - \rho \theta(t_1, \cdot), \left. \quad 
     \mathrm{Tr} [C, \partial \Lambda] \right|_{t=t_2} =  
     \rho \theta(t_2, \cdot),  \left. \quad 
     \mathrm{Tr} [C, \partial \Lambda] \right|_{x=\bar x} = - b  \rho \theta(\cdot, \bar x) 
\eq
Also, for a.e. $t_1, t_2 \in ]t^-, t^+[$ and a.e. $\bar x \in ]\alpha, \inf \gamma[$ we have  
\be \label{e:crostata}
       \partial_x Q_\theta (t_1,\cdot)= \rho\theta(t_1,\cdot),
      \; 
      \partial_x Q_\theta(t_2,\cdot)= \rho\theta(t_2,\cdot) \;
      \text{a.e. in $]\alpha, \beta[$ and}
    \; \partial_t Q_\theta (\cdot,\bar x)= -\rho b\theta (\cdot,\bar x)
    \; \text{a.e. in $]0, T[$.}
\eq
We now fix $t_1, t_2 \in ]t^-, t^+[$, $t_1<t_2$ and $\bar x \in ]\alpha, \inf \gamma[$ in such a way that~\eqref{e:tiramisu} and~\eqref{e:crostata} are both satisfied. We set 
$
    \Lambda : = \{ (t, x): \ t \in ]t_1, t_2[, \ \bar x < x< \gamma(t) \}
$
and we apply formula~\eqref{e:acmnt} with $C = (\rho \theta, b \rho \theta)$ and a test function $\psi$ such that $\psi \equiv 1$ on $\Lambda$. We recall that $\mathrm{Div} \ C =0$ owing to~\eqref{e:suppin2} and that the restriction of the Hausdorff measure $\mathcal H^1$ to the graph of $\gamma$ is defined by the formula 
$$
    \int_\gamma f ds = \int_{t_-}^{t^+} f (t, \gamma(t)) 
     \sqrt{1+\dot\gamma(t)^2} dt, \quad \text{for every $f: \R^2 \to \R$ bounded Borel function}. 
$$
We eventually arrive at 
	\begin{equation*}
	\begin{split}
	-\int_{t_1}^{t_2}&\mathrm{Tr}[(\rho\theta,\rho b\theta)](t,\gamma(t)^-)  \sqrt{1+\dot\gamma(t)^2}dt
  \\
  & \stackrel{\eqref{e:acmnt},\eqref{e:tiramisu}}{=} 
   \int_{\bar x}^{\gamma(t_2)} \rho\theta(t_2,x)dx - \int_{t_1}^{t_2} \rho b \theta(t,\bar x) dt  -  \int_{\bar x}^{\gamma(t_1)} \rho\theta(t_1,x)dx \\
	& \stackrel{\eqref{e:crostata}}{=}
    ~ \big( Q_\theta(t_2,\gamma(t_2)) - Q_\theta(t_2,\bar x)\big) + \big( Q_\theta (t_2,\bar x) - Q_\theta(t_1,\bar x) \big) + \big( Q_\theta(t_1,\bar x) - Q_\theta(t_1,\gamma(t_1))\big) \phantom{\int}\\
	&=~  Q_\theta(t_2,\gamma(t_2)) - Q_\theta(t_1,\gamma(t_1)) \phantom{\int}
	\end{split}
	\end{equation*}
	and by the arbitrariness of $t_1$ and $t_2$ this establishes \eqref{E_trace_Q_theta}.
\end{proof}
By relying on an argument similar to the one in the proof of Lemma~\ref{L_1} one can establish the following equalities. 
\begin{lemma} \label{l:crema}
     We have 
	\begin{equation}\label{E_trace_boundary}
	\partial_t Q_\theta(t,\alpha)= \mathrm{Tr}[\rho b \theta](t,\alpha^+) \qquad \mbox{and} \qquad
	\partial_t Q_\theta(t,\beta)= -\mathrm{Tr}[\rho b \theta](t,\beta^-),  \;  \text{for a.e. $t \in ]t^-, t^+[$}
	\end{equation}
	and 
	\begin{equation}\label{E_trace_initial}
	\partial_x Q_\theta (0,x) = [\rho\theta]_0(x)
      ,  \;  \text{for a.e. $x \in ]\alpha, \beta[$}.
	\end{equation}
\end{lemma}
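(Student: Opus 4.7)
The plan is to mimic the proof of Lemma~\ref{L_1} but replace the curve $\gamma$ by a suitable \emph{axis-parallel} boundary, so that the curve term becomes a boundary-trace term at $x=\alpha$ (or $x=\beta$, or $t=0$).

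For the identity $\partial_tQ_\theta(t,\alpha)=\mathrm{Tr}[\rho b\theta](t,\alpha^+)$, I fix $t_1<t_2$ in $]0,T[$ and $\bar x\in]\alpha,\beta[$, and consider the rectangle $\Lambda:=]t_1,t_2[\,\times\,]\alpha,\bar x[$. Set $C:=(\rho\theta,b\rho\theta)$; by \eqref{e:suppin2} one has $\mathrm{Div}\,C=0$. As in the proof of Lemma~\ref{L_1}, a standard approximation argument (invading $\Lambda$ with smoother rectangles) yields that for a.e.\ choice of $t_1,t_2,\bar x$ the normal traces of $C$ on the three interior sides reduce to the pointwise values $-\rho\theta(t_1,\cdot)$, $\rho\theta(t_2,\cdot)$, $b\rho\theta(\cdot,\bar x)$, while the normal trace on the remaining side $\{(t,\alpha):t\in]t_1,t_2[\}$ equals $\mathrm{Tr}[b\rho\theta](\cdot,\alpha^+)$ by the very construction recalled in \S\ref{s:distform}. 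Also, for a.e.\ such $t_1,t_2,\bar x$ the identities in \eqref{e:crostata} hold at $t=t_1$, $t=t_2$, and $x=\bar x$.

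Apply \eqref{e:acmnt} with a test function $\psi$ identically $1$ on $\Lambda$: since $\mathrm{Div}\,C=0$ and $\nabla\psi\equiv 0$ on $\Lambda$, the left-hand side of \eqref{e:acmnt} vanishes and I obtain
\begin{equation*}
0=\int_{t_1}^{t_2}\!\!\mathrm{Tr}[b\rho\theta](t,\alpha^+)\,dt+\int_{t_1}^{t_2}\!\! b\rho\theta(t,\bar x)\,dt-\int_\alpha^{\bar x}\rho\theta(t_1,x)\,dx+\int_\alpha^{\bar x}\rho\theta(t_2,x)\,dx.
\end{equation*}
Rewriting each of the last three integrals via \eqref{e:crostata} as a difference of values of $Q_\theta$ (namely $\int_\alpha^{\bar x}\rho\theta(t_i,\cdot)=Q_\theta(t_i,\bar x)-Q_\theta(t_i,\alpha)$ and $\int_{t_1}^{t_2}b\rho\theta(\cdot,\bar x)=Q_\theta(t_1,\bar x)-Q_\theta(t_2,\bar x)$), the $Q_\theta(\cdot,\bar x)$ contributions cancel and I am left with
\begin{equation*}
Q_\theta(t_2,\alpha)-Q_\theta(t_1,\alpha)=\int_{t_1}^{t_2}\mathrm{Tr}[b\rho\theta](t,\alpha^+)\,dt.
\end{equation*}
By the arbitrariness of $t_1<t_2$, this is equivalent to the first equality in \eqref{E_trace_boundary}. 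The analogous identity at $x=\beta$ follows verbatim by replacing $\Lambda$ with $]t_1,t_2[\,\times\,]\bar x,\beta[$; the sign in front of $\mathrm{Tr}[\rho b\theta](t,\beta^-)$ comes from the fact that the outer normal to this new rectangle at the right side points in the $+x$ direction, so that the trace relation \eqref{e:suppout} prescribes the opposite sign.

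For \eqref{E_trace_initial} I run the same scheme on a rectangle $\Lambda:=]0,t[\,\times\,]\underline x,\bar x[$ with $\underline x<\bar x$ in $]\alpha,\beta[$: three of the four boundary traces are again pointwise (for a.e.\ $t,\underline x,\bar x$), while the bottom trace equals $-[\rho\theta]_0$ by the construction in \S\ref{s:distform}. After using \eqref{e:crostata} to recognise the resulting integrals as differences of values of $Q_\theta$, the $Q_\theta(t,\cdot)$ contributions cancel and one obtains $Q_\theta(0,\bar x)-Q_\theta(0,\underline x)=\int_{\underline x}^{\bar x}[\rho\theta]_0(x)\,dx$; differentiating in $\bar x$ gives \eqref{E_trace_initial}. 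The only mildly delicate point throughout is the justification of the pointwise reduction of normal traces on the interior sides of each rectangle for a.e.\ choice of the parameters, but this is exactly the approximation step already invoked in Lemma~\ref{L_1} (see Remark~\ref{r:azzurro}) and requires no new idea here.
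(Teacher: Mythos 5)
Your proof is correct and is essentially the argument the paper has in mind: the paper only remarks that Lemma~\ref{l:crema} follows ``by relying on an argument similar to the one in the proof of Lemma~\ref{L_1}'', and your axis-parallel rectangles are precisely the natural implementation of that remark, with the curved side of the trapezoid in Lemma~\ref{L_1} replaced by the fixed segment $\{x=\alpha\}$ (resp.\ $\{x=\beta\}$, $\{t=0\}$), on which the outward normal trace is by definition $\mathrm{Tr}[b\rho\theta](\cdot,\alpha^+)$ (resp.\ $\mathrm{Tr}[b\rho\theta](\cdot,\beta^-)$, $-[\rho\theta]_0$) via~\eqref{e:suppout}.
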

\begin{remark}\label{R_potential}
By applying Lemma \ref{l:crema} we can express the boundary conditions \eqref{e:dataibvp2} in terms of  $Q_\theta$ and $Q$, that is 
\begin{equation} \label{e:dataibvp2pot}
    \left.
    \begin{array}{ccc}
   \partial_t Q_\theta(t,\alpha) =  
  \partial_t Q (t, \alpha) \bar \theta (t) \quad  \text{for a.e. $t$ such that $ \partial_t Q(t,\alpha)>0$,} \\ 
    \partial_t Q_\theta(t,\beta) =  
   \partial_t Q(t,\beta)  \underline{\theta}(t) \quad \text{for a.e. $t$ such that $ \partial_t Q(t,\beta)<0$}, \\
    \partial_x Q_\theta(0,x) = \partial_xQ(0,x) \theta_0 (x) \quad 
     \text{for a.e. $x \in ]\alpha, \beta[$}. 
\end{array}
\right. 
\end{equation}
\end{remark}
\subsection{Weak characteristic curves} \label{ss:marrone}
In this paragraph we introduce a monotone family of Lipschitz continuous curves along which the potential $Q$ is transported. By a slight abuse of notation, we term them ``weak characteristic curves" because, as pointed out in \S\ref{ss:roadmap}, we regard them as the nonsmooth counterpart of the classical characteristic curves defined in a smooth setting. In Remark~\ref{r:vaniglia} we comment about definition~\ref{e:vaniglia}. 
\begin{lemma}\label{L_2}
	Fix $(\bar t,\bar x)\in ]0,T[\times ]\alpha,\beta[$ and set $\bar h= Q(\bar t, \bar x)$. We define the \emph{characteristic curve}
	$\gamma_{\bar t,\bar x}:[0,T] \to [\alpha,\beta]$ by
     setting 
	\begin{equation} \label{e:vaniglia}
	\gamma_{\bar t, \bar x}(t) :
     = \max \Bigg\{ \alpha \ ;  \
     \min \Big\{ \beta \ ; \ 
                \inf \{ x \in ]\alpha,\beta[ :
                Q(t,x)>\bar h \} \ ; \ 
                \bar x - \|b\|_{L^\infty}(t-\bar t) 
     \Big\}
     \Bigg\},
	\end{equation}
	where we have used the convention $\inf \emptyset = +\infty$.	Moreover, we set
	\begin{equation} \label{e:cioccolato}
	\begin{split}
	t^*(\gamma_{\bar t, \bar x})&:=\sup\{ t \in [\bar t,T]: \gamma_{\bar t,\bar x}(s) \in ]\alpha,\beta[ \mbox{ for all }s \in [\bar t,t[\}, \\
	t_*(\gamma_{\bar t, \bar x})&:=\inf\{ t \in [0,\bar t]: \gamma_{\bar t,\bar x}(s) \in ]\alpha,\beta[ \mbox{ for all }s \in ]t,\bar t]\}.
	\end{split}
	\end{equation}
	Then for every $(\bar t,\bar x) \in ]0,T[\times ]\alpha,\beta[$ we have the following properties:
	\begin{enumerate}
		\item[\emph{(a)}] $\gamma_{\bar t,\bar x}$ is a $\|b\|_{L^\infty}$-Lipschitz continuous curve;
		\item[\emph{(b)}] $\gamma_{\bar t,\bar x}(\bar t)=\bar x$; 
		\item[\emph{(c)}] $t_*(\gamma_{\bar t,\bar x}) \in [0, \bar t[$ and  $t^*(\gamma_{\bar t,\bar x}) \in ]\bar t,T]$;
		\item[\emph{(d)}] if $t_*(\gamma_{\bar t,\bar x})>0$, then $\gamma_{\bar t,\bar x}(t_*(\gamma_{\bar t,\bar x}))\in \{\alpha, \beta\}$;
		\item[\emph{(e)}] if $t^*(\gamma_{\bar t,\bar x})<T$, then $\gamma_{\bar t,\bar x}(t^*(\gamma_{\bar t,\bar x}))\in \{\alpha, \beta\}$;
		\item[\emph{(f)}] $Q(t,\gamma_{\bar t,\bar x}(t))=\bar h$ for every $t \in [t_*(\gamma_{\bar t,\bar x}), t^*(\gamma_{\bar t,\bar x})]$;
		\item[\emph{(g)}] for a.e. $t \in [t_*(\gamma_{\bar t,\bar x}), t^*(\gamma_{\bar t,\bar x})]$ we have 
		\begin{equation*}
		\mathrm{Tr}[(\rho,\rho b)](t,\gamma_{\bar t,\bar x}(t)^-)=\mathrm{Tr}[(\rho,\rho b)](t,\gamma_{\bar t,\bar x}(t)^+)=0;
		\end{equation*}
		\item[\emph{(h)}] for every $\alpha <\bar x_1 \le \bar x_2 < \beta$ and every $ t \in [0,T]$ we have  $\gamma_{\bar t,\bar x_1}(t) \le \gamma_{\bar t,\bar x_2}(t)$.
	\end{enumerate}
\end{lemma}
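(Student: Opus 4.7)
The plan is to deduce all eight properties from two structural facts about the potential $Q$: that $Q(t,\cdot)$ is non-decreasing for every $t$ (since $\partial_x Q = \rho \geq 0$), and a cone-of-propagation inequality
\begin{equation*}
Q\big(t_2, y - \|b\|_{L^\infty}|t_2-t_1|\big) \leq Q(t_1, y) \leq Q\big(t_2, y + \|b\|_{L^\infty}|t_2-t_1|\big),
\end{equation*}
valid whenever both sides make sense. The second is obtained by integrating the a.e.\ identity $\partial_t Q + b\,\partial_x Q = 0$ along the two cone lines of slope $\pm \|b\|_{L^\infty}$ and using $\rho \geq 0$ to conclude that $Q$ is monotone along them; the Lipschitz continuity of $Q$ (from $\|\partial_x Q\|_{L^\infty}\leq \|\rho\|_{L^\infty}$ and $\|\partial_t Q\|_{L^\infty}\leq \|b\rho\|_{L^\infty}$) extends the inequality to every admissible point.

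Next I would analyze the generalized inverse $\sigma(t) := \inf\{x \in ]\alpha,\beta[ : Q(t,x) > \bar h\}$. Using the cone inequality, if $Q(t_1, y) > \bar h$ then $Q(t_2, y + \|b\|_{L^\infty}|t_2-t_1|) > \bar h$, whence $\sigma(t_2) \leq \sigma(t_1) + \|b\|_{L^\infty}|t_2-t_1|$ whenever the right-hand side is $\leq \beta$; the convention $\inf\emptyset = +\infty$ combined with the $\min$ against $\beta$ in \eqref{e:vaniglia} absorbs the degenerate case. The symmetric argument yields the matching lower bound, so $\min\{\beta, \sigma(t)\}$ is $\|b\|_{L^\infty}$-Lipschitz on $[0,T]$. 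Since the linear term $\bar x - \|b\|_{L^\infty}(t-\bar t)$ has the same Lipschitz constant, and outer $\max$ and inner $\min$ against the constants $\alpha, \beta$ preserve Lipschitz regularity with the same constant, property~(a) follows. For property~(b), the monotonicity of $Q(\bar t, \cdot)$ together with $\bar h = Q(\bar t, \bar x)$ yields $Q(\bar t, x) \leq \bar h$ for every $x \leq \bar x$, so $\sigma(\bar t) \geq \bar x$; since the linear term equals $\bar x$ at $t=\bar t$ and $\alpha < \bar x < \beta$, direct evaluation gives $\gamma_{\bar t, \bar x}(\bar t) = \bar x$. Properties (c), (d), (e) then follow from (a) and (b): because $\gamma_{\bar t, \bar x}$ is continuous and takes the value $\bar x \in ]\alpha,\beta[$ at $\bar t$, it stays strictly inside $]\alpha, \beta[$ on an open neighborhood of $\bar t$, forcing $t_* < \bar t$ and $t^* > \bar t$, and the supremum/infimum definitions together with continuity identify the boundary values at $t_*, t^*$ when these are interior to $[0,T]$. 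Property~(h) likewise reduces to monotonicity: $Q(\bar t, \bar x_1) \le Q(\bar t, \bar x_2)$ because $Q(\bar t, \cdot)$ is non-decreasing, whence $\sigma$ for the level $\bar h_1$ is pointwise dominated by $\sigma$ for $\bar h_2$, and the linear term is manifestly monotone in $\bar x$.

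The substantive step is property~(f), namely $Q(t, \gamma_{\bar t, \bar x}(t)) = \bar h$ on $[t_*, t^*]$. When the inner $\min$ selects $\sigma(t)$ with $\sigma(t) \in ]\alpha, \beta[$, the equality follows from continuity of $Q$ together with the defining property of the infimum: $Q(t, \sigma(t))$ cannot exceed $\bar h$ because $\sigma(t)$ is approached from the left by points where $Q \leq \bar h$, and it cannot be strictly smaller because $\sigma(t)$ is approached from the right by points where $Q > \bar h$. When the $\min$ selects the linear cap, I would apply the cone inequality at the base point $(\bar t, \bar x)$ (where $Q = \bar h$ by definition of $\bar h$) together with the monotonicity of $Q(t,\cdot)$ and the fact that $\sigma(t)$ always satisfies $\sigma(t) \leq \sigma(\bar t) + \|b\|_{L^\infty}|t-\bar t| \leq \bar x + \|b\|_{L^\infty}|t-\bar t|$, to sandwich $Q(t, \gamma(t))$ between $\bar h$ from above and $\bar h$ from below. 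The main obstacle I anticipate is making this pinching argument rigorous in the case where $Q(t, \cdot)$ has a plateau straddling the cap: one must track carefully how the plateaus of $Q$ evolve in time, and exploit the fact that $\partial_x Q = \rho$ so that plateaus coincide exactly with zero-sets of $\rho$, which by the continuity equation for $\rho$ propagate in a controlled way along $b$.

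Finally, property~(g) is a direct corollary of~(f) via Lemma~\ref{L_1} applied with $\theta \equiv 1$, so that $Q_\theta = Q$: since $Q(t, \gamma_{\bar t, \bar x}(t))$ is constant on $[t_*(\gamma_{\bar t, \bar x}), t^*(\gamma_{\bar t, \bar x})]$ its derivative vanishes a.e., and \eqref{E_trace_Q_theta} (with $\rho\theta = \rho$, $\rho b \theta = \rho b$) forces both one-sided normal traces $\mathrm{Tr}[(\rho, \rho b)](t, \gamma_{\bar t, \bar x}(t)^\pm)$ to be zero a.e.\ on that interval, giving~(g).
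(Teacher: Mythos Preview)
Your overall strategy coincides with the paper's: the cone-of-propagation inequality you isolate is precisely what the paper extracts via Lemma~\ref{L_1} applied to the extremal lines of slope $\pm\|b\|_{L^\infty}$, and properties (a)--(e), (g), (h) go through as you describe.

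The place where you diverge from the paper is your treatment of~(f) when the linear cap is active, and here you are overcomplicating matters. Your chain $\sigma(t)\le \sigma(\bar t)+\|b\|_{L^\infty}|t-\bar t|\le \bar x+\|b\|_{L^\infty}|t-\bar t|$ is wrong in the second inequality: you only know $\sigma(\bar t)\ge \bar x$, not $\le$. More importantly, the plateau worry you raise is unnecessary, and no tracking of how zero-sets of $\rho$ propagate is needed. The paper's argument (for $t\le\bar t$) is simply this: the very fact that the $\min$ in~\eqref{e:vaniglia} selects the cap means $\bar x-\|b\|_{L^\infty}(t-\bar t)\le \sigma(t)$, and by definition of $\sigma(t)$ this immediately yields $Q(t,\gamma_{\bar t,\bar x}(t))\le\bar h$. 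The reverse inequality $Q(t,\gamma_{\bar t,\bar x}(t))\ge\bar h$ comes from your cone inequality along the line of slope $-\|b\|_{L^\infty}$ through $(\bar t,\bar x)$, exactly as you say. So the two directions decouple cleanly---one from the structure of the $\min$, one from the cone---and the plateau case is absorbed automatically. The paper then declares the case $t\in[\bar t,t^*[$ ``analogous.''
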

\begin{remark}\label{r:vaniglia}
The basic idea underpinning definition~\eqref{e:vaniglia} is very loosely speaking the following.  
As pointed out in \S\ref{ss:roadmap}, as a very first approximation we would like to define $\gamma_{\bar t,\bar x}$ as a level set of $Q$, namely the set of points such that $Q= Q(\bar t, \bar x) = \bar h$. However, since $\rho$ can attain the value $0$, the level set of $Q$ is in general not a curve. To tackle this problem we introduce the term $\inf \{ x \in ]\alpha,\beta[ :
                Q(t,x)>\bar h \}$. The term 
$\bar x - \|b\|_{L^\infty}(t-\bar t) $ makes sure that $\gamma_{\bar t,\bar x}$ passes through the point $(\bar t, \bar x)$, whereas the remaining terms enforce the fact that $\gamma_{\bar t,\bar x}$ is confined between $\alpha$ and $\beta$. 
\end{remark}
\begin{proof}[Proof of Lemma~\ref{L_2}]
We first establish property {(a)}.  Since $t \mapsto \bar x - (t-\bar t)\|b\|_{L^\infty}$ is a $\|b\|_{L^\infty}$-Lipschitz continuous curve, it suffices to show that 
	\begin{equation}\label{E_tilde_gamma}
	\tilde \gamma_{\bar t, \bar x}(t) : = 
    \max \Bigg\{ \alpha \ ;  \
     \min \Big\{ \beta \ ; \ 
                \inf \{ x \in ]\alpha,\beta[ :
                Q(t,x)>\bar h \} 
     \Big\}
     \Bigg\} 
	\end{equation}
	is a $\|b\|_{L^\infty}$-Lipschitz continuous curve on  $[0,T]$.
	Assume by contradiction that there are $t_1, t_2 \in [0, T]$, $t_1< t_2$, such that 
	$\tilde \gamma_{\bar t,\bar x}(t_1) < \tilde \gamma_{\bar t,\bar x}(t_2) - \|b\|_{L^\infty}(t_2-t_1)$. Consider the curve 
	$\gamma(t)= \tilde \gamma_{\bar t,\bar x}(t_2) + \|b\|_{L^\infty}(t-t_2)$,  $t \in [t_1,t_2]$.
	Since $\dot \gamma(t)= \|b\|_{L^\infty}$ and 
	\begin{equation*}
	(\rho(t,x), \rho(t,x) b(t,x)) \cdot (1, \|b\|_{L^\infty})^\perp =  \rho b -\rho \|b\|_{L^\infty}   \leq 0 \qquad \mbox{for a.e. }(t,x)\in ]0,T[\times ]\alpha,\beta[,
	\end{equation*}
	then $\mathrm{Tr}[(\rho,\rho b)](t,\gamma(t)^+)\ge 0$.  By applying Lemma \ref{L_1} with $\theta \equiv 1$ we get 
	\begin{equation}\label{E_ineq_Q}
	\frac{d}{dt}Q(t,\gamma(t))\ge 0 \qquad \Longrightarrow \qquad Q(\gamma(t_2))\ge Q(\gamma(t_1)).
	\end{equation} 
	We now show that this contradicts the definition of $\tilde \gamma_{\bar t,\bar x}$.
Since by construction $\gamma(t_2) \in ]\alpha,\beta]$, then owing to the continuity of $Q$ we have $Q(t_2,\gamma(t_2))\le \bar h$ and hence the inequality~\eqref{E_ineq_Q} yields $Q(t_1,\gamma(t_1))\le \bar h$.
	Since $\tilde \gamma_{\bar t,\bar x}(t_1)<\gamma(t_1)$ this contradicts the minimality in \eqref{E_tilde_gamma}.
	
	Similarly, assume by contradiction that there is $t_1<t_2$ such that 
	$\tilde \gamma_{\bar t,\bar x}(t_1) > \tilde \gamma_{\bar t,\bar x}(t_2) + \|b\|_{L^\infty}(t_2-t_1)$ and consider the curve 
	$\gamma(t)= \tilde \gamma_{\bar t,\bar x}(t_2) + \|b\|_{L^\infty}(t_2-t)$ for $t \in [t_1,t_2]$.
	The same argument as before yields 
	$
	Q(t_2,\gamma(t_2))\le Q(t_1,\gamma(t_1))
	$ and, since 
	by construction $\gamma(t_2) \in [\alpha, \beta[$, then owing to the continuity of $Q$ we have $Q(t_2,\gamma(t_2))\ge \bar h$ and this in turn yields $Q(t_1,\gamma(t_1))\ge \bar h$.
	Since $\tilde \gamma_{\bar t,\bar x}(t_1)>\gamma(t_1)$ this contradicts the minimality in \eqref{E_tilde_gamma}.
	This completes the proof of property {(a)}. 

To establish property {(b)}, we recall that, since $\rho \ge 0$, then for every $t \in [0, T]$ the function $Q(t, \cdot)$ is monotone non-decreasing. This implies that $ \inf \{ x \in ]\alpha,\beta[ :
                Q(\bar t,x)>\bar h \} \ge \bar x$  and by using formula~\eqref{e:vaniglia} this yields property {(b)}. 

To establish property {(c)} we point out that, by definition, $t^*(\gamma_{\bar t,\bar x})\in [\bar t,T]$. Since $\gamma_{\bar t,\bar x}(\bar t)=\bar x \in ]\alpha,\beta[$ and 
	$\gamma_{\bar t,\bar x}$ is a Lipschitz continuous curve, then $t^*(\gamma_{\bar t,\bar x})> \bar t$. An analogous argument shows that $t_*(\gamma_{\bar t,\bar x})< \bar t$.

Properties {(d)} and {(e)} both follow from the definition of $t_*(\gamma_{\bar t,\bar x})$ and 
$t^*(\gamma_{\bar t,\bar x})$ and the Lipschitz regularity of  $\gamma_{\bar t,\bar x}$.
 	
	We now establish property {(f)}. Assume that $t \in [0, T]$ satisfies $\gamma_{\bar t,\bar x}(t)= \tilde \gamma_{\bar t,\bar x}(t)$, where $\tilde \gamma$ is the same curve as in~\eqref{E_tilde_gamma}. 
The continuity of $Q$ implies that, if  $t \in ]t_*(\gamma_{\bar t,\bar x}), t^*(\gamma_{\bar t,\bar x})[$, then $Q(t,\gamma_{\bar t,\bar x}(t))=\bar h$. We are thus left to show that $Q(t,\gamma_{\bar t,\bar x}(t))=\bar h$ for every 
	$t \in]t_*(\gamma_{\bar t,\bar x}), t^*(\gamma_{\bar t,\bar x})[$ such that $\gamma_{\bar t,\bar x}(t)= \bar x - \|b\|_{L^\infty}(t- \bar t)$.
	We prove it for $t \in ]t_*(\gamma_{\bar t,\bar x}),\bar t]$, the case $t \in [\bar t, t^*(\gamma_{\bar t,\bar x})[$ is analogous.
	The same argument as in the proof of property {(a)} yields 
	\begin{equation}\label{E_barh>}
	Q(t, \bar x - \|b\|_{L^\infty}(t-\bar t)) \ge Q(\bar t,\bar x) = \bar h.
	\end{equation}
	Owing to the definition of $\gamma_{\bar t,\bar x}$ and the monotonicity of $Q$ with respect to $x$, the condition $\gamma_{\bar t,\bar x}(t)= \bar x - \|b\|_{L^\infty}(t- \bar t)$ holds only if 
	\begin{equation}\label{E_barh<}
	Q(t, \bar x - \|b\|_{L^\infty}(t- \bar t)) \le \bar h.
	\end{equation}
	By combining \eqref{E_barh>} and \eqref{E_barh<} we complete the proof of property {(f)}.
	
	Property {(g)} follows from property {(f)} and Lemma \ref{L_1}. Finally, by combining definition~\eqref{e:vaniglia} and the monotonicity of $Q$ with respect to $x$ we establish property {(h)}.
\end{proof}
\subsection{The function $\tilde Q$}\label{ss:giallo}
\subsubsection{Definition of $\tilde Q$} \label{sss:tildeq}
We first provide the formal definition of $\tilde Q$ and then in Remark~\ref{r:glicine} discuss the basic ideas underpinning its construction. Let $Q$ be as before the potential function defined by~\eqref{e:verde}. Given $\bar t \in ]0,T[$, we set 
\begin{equation}\label{E_def_E^-}
\begin{split}
E^-_{\bar t}:=&~ \left\{ t \in [0,\bar t]: U^-(Q(\cdot,\alpha),[0,\bar t])(t)=Q(t,\alpha)\right\} \\
E^+_{\bar t}:= &~\left\{ t \in [0,\bar t]: L^+(Q(\cdot,\beta),[0,\bar t])(t)=Q(t,\beta)\right\}.
\end{split}
\end{equation}
and 
\begin{equation} \label{e:gelato}
x_\alpha(\bar t):=\inf\{x \in ]\alpha,\beta[: t_*(\gamma_{\bar t, x})=0\} \qquad \mbox{and} \qquad x_\beta (\bar t):=  \sup\{x \in ]\alpha,\beta[: t_*(\gamma_{\bar t, x})=0\},
\end{equation}
where $t_*$ and $t^*$ are the same as in~\eqref{e:cioccolato}. Note that $ \{x \in ]\alpha,\beta[: t_*(\gamma_{\bar t, x})=0\} \neq \emptyset$ owing to~\eqref{E_ass} and to the fact that $\gamma_{\bar t, x}$ is a $\| b \|_{L^\infty}$-Lipschitz continuous curve, see Lemma~\ref{L_2}. This yields $x_\alpha(\bar t) \in [\alpha,\beta[$, $x_\beta(\bar t) \in ]\alpha,\beta]$ and $x_\alpha (\bar t) \leq x_\beta (\bar t)$. 

We now fix $\theta_0 \in L^\infty (]\alpha, \beta[)$ and $\bar \theta, \underline\theta \in L^\infty (]0, T[)$ and define the function $\tilde Q:]0,T[ \times ]\alpha,\beta[ \to \R$. Given $(\bar t, \bar x) \in ]0, T[ \times ]\alpha, \beta[$,  we define the value $Q(\bar t, \bar x)$ by separately considering the following cases: \\
{\sc Case 1:} $x_\alpha(\bar t)< \bar x < x_\beta(\bar t)$. We set
\begin{equation} \label{e:sacher}
\tilde Q (\bar t,\bar x) = \int_\alpha^{\gamma_{\bar t,\bar x}(0)}[\rho]_0 (x)\theta_0(x)dx.
\end{equation}
{\sc Case 2:} $\alpha < \bar x< x_\alpha(\bar t)$. We set 
\begin{equation} \label{e:limone}
t_{\min}:=\min E^-_{\bar t} ,
\qquad \qquad \qquad 
\bar y :=\begin{cases}
\alpha & \text{if }t_{\min} =0\\
\inf_{\varepsilon>0}\gamma_{t_{\min},\alpha+ \varepsilon}(0) & \text{otherwise} 
\end{cases}
\end{equation}
and 
\begin{equation}\label{E_formula2}
\tilde Q(\bar t,\bar x)= \int_\alpha^{\bar y}[\rho]_0 (x)\theta_0(x)dx + \int_{E^-_{\bar t} \cap ]0,t_*(\gamma_{\bar t,\bar x})[} \mathrm{Tr}[\rho b](t,\alpha^+) \bar \theta(t)dt. 
\end{equation}
{\sc Case 3:} $x_\beta(\bar t) < \bar x< \beta$. We set 
\begin{equation*}
\underline t_{\min}:=\min E^+_{\bar t}, \qquad \qquad \qquad 
\underline y :=\begin{cases}
\beta & \text{if }\underline t_{\min} =0\\
\sup_{\varepsilon>0}\gamma_{\underline t_{\min},\beta- \varepsilon}(0) & \text{otherwise}
\end{cases}
\end{equation*}
and
\begin{equation}\label{E_formula3}
\tilde Q(\bar t,\bar x)= \int_{\alpha}^{\underline y }[\rho]_0 (x)\theta_0(x)dx + \int_{E^+_{\bar t} \cap ]0,t_*(\gamma_{\bar t,\bar x})[} -\mathrm{Tr}[\rho b](t,\beta^-) \underline \theta(t)dt.
\end{equation}
Note that $\tilde Q$ is not defined at the points $(t,x_\alpha(t))$ and $(t,x_\beta(t))$, however Lemma~\ref{L_tildeQ} yields that $\tilde Q$ is a Lipschitz continuous function and therefore we can extend it by continuity to the whole set 
$[0,T]\times [\alpha,\beta]$. 
\begin{remark}\label{r:glicine}
We now  provide an heuristic explanation of the definition of the function $\tilde Q$. First, we recall that we want to define $\tilde Q$ in such a way that i) it is constant along the connected components of the weak characteristic curves $\gamma_{t,x}$ inside $]0, T[ \times ]\alpha, \beta[$, ii) it satisfies the equality $\partial_x \tilde Q (0, x)= [\rho]_0 \theta_0$ and the equalities $\partial_t \tilde Q (t, \alpha) = \mathrm{Tr}[\rho b](t,\alpha^+) \bar \theta$ and $\partial_t \tilde Q (t, \beta) = - \mathrm{Tr}[\rho b](t,\beta^-) \underline \theta$ where $ \mathrm{Tr}[\rho b](t,\alpha^+) < 0$ and $\mathrm{Tr}[\rho b](t,\beta^-)<0$, respectively and iii) is uniquely determined by the initial data $[\rho]_0 \theta_0$ and by the assigned boundary data. With this in mind, let us fix a point $(\bar t, \bar x)$ and backwardly follow the weak characteristic line $\gamma_{\bar t, \bar x}$ to define the value $\tilde Q(\bar t, \bar x)$. If $x_\alpha (\bar t) < \bar x < x_\beta (\bar t)$, this means that the backward weak characteristic curve $\gamma_{\bar t, \bar x}$ survives up to time $t=0$ and by recalling i) and ii) above this yields~\eqref{e:sacher}. If $\bar x < x_\alpha (\bar t)$, then we follow the weak characteristic line $\gamma_{\bar t, \bar x}$ up to the time
$t_* (\gamma_{\bar t, \bar x})$, when it collides with the boundary. We then take into account the boundary contributions up to the time $t_{\min}$ and in the meanwhile gain the second term on the right hand side of~\eqref{E_formula2}, which only depends on the boundary values attained on the set where we effectively assign them. 
At the time $t_{\min}$, heuristically speaking (the situation is actually slightly more complicated as the definition of $\bar y$ shows) we follow the backward weak characteristic curve detatching from the boundary, which crosses the $t=0$ axis at the point $\bar y$, and here we gain the first term in~\eqref{E_formula2}. 
\end{remark}
We first establish a few preliminary results. 
\begin{lemma}
\label{l:strudel}
Fix $\bar t \in ]0, T[$ and $x \in ]\alpha, \beta[$ and assume that $t_* (\gamma_{\bar t, x})>0$. If $ \gamma_{\bar t, x}( t_* (\gamma_{\bar t, x}))= \alpha$, then  $t_* (\gamma_{\bar t, x}) \in E^-_{\bar t}$. If $ \gamma_{\bar t, x}( t_* (\gamma_{\bar t, x}))= \beta$, then  $t_* (\gamma_{\bar t, x}) \in E^+_{\bar t}$. 
\end{lemma}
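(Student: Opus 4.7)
The plan is to exploit, in the case $\gamma_{\bar t,x}(t_*(\gamma_{\bar t,x}))=\alpha$, the monotonicity of $Q(t,\cdot)$ in $x$ (which comes from $\rho\ge 0$) together with property (f) of Lemma~\ref{L_2}, in order to show that $Q(\cdot,\alpha)$ stays below the value $\bar h:=Q(\bar t,x)$ on the whole interval $[t_*(\gamma_{\bar t,x}),\bar t]$ while attaining $\bar h$ exactly at $t_*(\gamma_{\bar t,x})$. Membership $t_*(\gamma_{\bar t,x})\in E^-_{\bar t}$ then follows by exhibiting an explicit monotone non-increasing majorant of $Q(\cdot,\alpha)$ that saturates $Q(\cdot,\alpha)$ at $t_*(\gamma_{\bar t,x})$.

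More precisely, I would abbreviate $t_*:=t_*(\gamma_{\bar t,x})$ and $\bar h:=Q(\bar t,x)$. By property (c) of Lemma~\ref{L_2} together with the hypothesis $t_*>0$, we have $t_*\in\,]0,\bar t[$, so $t_*$ lies in the interval on which the envelope $U^-(Q(\cdot,\alpha),[0,\bar t])$ is defined. From property (f) of Lemma~\ref{L_2} and the assumption $\gamma_{\bar t,x}(t_*)=\alpha$ we get $Q(t_*,\alpha)=\bar h$. Moreover, since $\rho\ge 0$, the function $Q(t,\cdot)$ is monotone non-decreasing in $x$, so that property (f) yields
\begin{equation*}
Q(s,\alpha)\le Q(s,\gamma_{\bar t,x}(s))=\bar h\qquad\text{for every }s\in[t_*,\bar t].
\end{equation*}

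Next I would establish the crucial inequality $U^-(Q(\cdot,\alpha),[0,\bar t])(t_*)\le Q(t_*,\alpha)$, the opposite bound being immediate from~\eqref{e:fragola1}. Define $g:[0,\bar t]\to\R$ by $g(t):=\sup_{s\in[t,\bar t]}Q(s,\alpha)$. By construction $g$ is monotone non-increasing and satisfies $g\ge Q(\cdot,\alpha)$, and the previous display gives $g(t_*)\le\bar h=Q(t_*,\alpha)$; since trivially $g(t_*)\ge Q(t_*,\alpha)$, we have $g(t_*)=Q(t_*,\alpha)$. By the very definition of the upper decreasing envelope in~\eqref{e:fragola1}, $U^-(Q(\cdot,\alpha),[0,\bar t])\le g$ pointwise, so $U^-(Q(\cdot,\alpha),[0,\bar t])(t_*)\le Q(t_*,\alpha)$, and combined with the reverse inequality this gives $t_*\in E^-_{\bar t}$.

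The case $\gamma_{\bar t,x}(t_*)=\beta$ is dual: monotonicity of $Q(t,\cdot)$ now forces $Q(s,\beta)\ge Q(s,\gamma_{\bar t,x}(s))=\bar h$ on $[t_*,\bar t]$ with equality at $t_*$, and one replaces $g$ with the monotone non-decreasing minorant $\tilde g(t):=\inf_{s\in[t,\bar t]}Q(s,\beta)$, which satisfies $\tilde g(t_*)=Q(t_*,\beta)$; by the definition~\eqref{e:fragola2} of $L^+$ this yields $L^+(Q(\cdot,\beta),[0,\bar t])(t_*)=Q(t_*,\beta)$, i.e.\ $t_*\in E^+_{\bar t}$. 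I do not expect any substantial obstacle: the whole argument is an elementary combination of property (f), the monotonicity of $Q(t,\cdot)$ in $x$, and the construction of sharp one-sided monotone envelopes; the only point to check carefully is that $t_*$ belongs to the interval $[0,\bar t]$ on which the envelopes in~\eqref{E_def_E^-} are defined, which is guaranteed by property (c) of Lemma~\ref{L_2}.
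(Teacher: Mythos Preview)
Your proof is correct and follows essentially the same approach as the paper: both arguments use property~(f) of Lemma~\ref{L_2} together with the monotonicity of $Q(t,\cdot)$ to show that $Q(s,\alpha)\le Q(t_*,\alpha)$ on $[t_*,\bar t]$, and then exhibit an admissible non-increasing majorant that touches $Q(\cdot,\alpha)$ at $t_*$ to conclude $U^-(t_*)=Q(t_*,\alpha)$. The only cosmetic difference is the choice of majorant---you take $g(t)=\sup_{s\in[t,\bar t]}Q(s,\alpha)$, while the paper patches $U^-$ on $[0,t_*[$ with the constant $Q(t_*,\alpha)$ on $[t_*,\bar t]$---but both work for the same reason.
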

\begin{proof}
We only establish the implication $ \gamma_{\bar t, x}( t_* (\gamma_{\bar t , x}))= \alpha$ then  $t_* (\gamma_{\bar t, x}) \in E^-_{\bar t}$ , the proof of the other one is analogous. \\
{\sc Step 1:} we show that 
\be \label{e:ribes}
Q(s, \alpha) \leq Q(t_* (\gamma_{\bar t, x}), \alpha)
\quad \text{for every $s \in [t_* (\gamma_{\bar t, x}), \bar t].$}
\eq
 Assume by contradiction that there is $s \in [t_* (\gamma_{\bar t, x}), \bar t]$ such that $Q(s, \alpha) > Q(t_* (\gamma_{\bar t, x}, \alpha)$, then by definition of $t_* (\gamma_{\bar t, x})$ we have $\gamma_{\bar t, x} (s) \in ]\alpha, \beta[$ and owing to the fact that $Q(s, \cdot)$ is a monotone 
non-decreasing function we have $Q(s, \gamma_{\bar t, x} (s)) \ge Q(s, \alpha)  > Q(t_* (\gamma_{\bar t, x}), \alpha)$. On the other hand, owing to property (f) in the statement of Lemma~\ref{L_2} we have $Q(s, \gamma_{\bar t, x} (s)) =Q(t_* (\gamma_{\bar t, x}), \alpha)$ and this yields a contradiction. \\
{\sc Step 2:} we conclude the proof . We define the function $g: [0, \bar t] \to \R$ by setting
$$
    g(t) : = \left\{
    \begin{array}{ll}
               U^-(Q(\cdot,\alpha),[0,\bar t])(t) & t \in [0, t_* (\gamma_{\bar t, x})[ \\
                Q(t_* (\gamma_{\bar t, x}), \alpha) & t \in [t_* (\gamma_{\bar t, x}), \bar t],
    \end{array}
   \right.
$$
 then $g$ is a monotone non-increasing function and, owing to~\eqref{e:ribes}, $g \ge f$. This implies that $g\ge U^-(Q(\cdot,\alpha),[0,\bar t])$ and hence that $U^-(Q(\cdot,\alpha),[0,\bar t])( t_* (\gamma_{\bar t, x}))= Q(\alpha,  t_* (\gamma_{\bar t, x}))$.  
\end{proof}
\begin{lemma}\label{l:cipria} 
For every $x \in ]\alpha,\beta[$ the map
$t \mapsto \tilde Q(t,\gamma_{\bar t,x}(t))$ is constant on $]t_*(\gamma_{\bar t,x}), t^*(\gamma_{\bar t,x})[$.
\end{lemma}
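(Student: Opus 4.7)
The plan is as follows. Set $\gamma := \gamma_{\bar t, x}$ and $\bar h := Q(\bar t, x)$; by property (f) of Lemma~\ref{L_2} we have $Q(s, \gamma(s)) = \bar h$ for every $s \in [t_*(\gamma), t^*(\gamma)]$. I would fix $s_1 \le s_2$ in $]t_*(\gamma), t^*(\gamma)[$ and consider the auxiliary characteristic curves $\gamma^{s_i} := \gamma_{s_i, \gamma(s_i)}$, which by construction also correspond to the level $\bar h$. The goal is to prove $\tilde Q(s_1, \gamma(s_1)) = \tilde Q(s_2, \gamma(s_2))$ by comparing the values delivered by the formulas~\eqref{e:sacher}, \eqref{E_formula2} and~\eqref{E_formula3}.

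The first step is a \emph{coherent case assignment}: I would show that both $(s_1, \gamma(s_1))$ and $(s_2, \gamma(s_2))$ belong to the same one of the three cases in \S\ref{sss:tildeq}, and that this case is determined by $\gamma$ itself (namely, whether $\gamma$ reaches $\alpha$, $\beta$, or the axis $t=0$ at $t_*(\gamma)$). To do this, I would use the monotonicity property (h) of Lemma~\ref{L_2} together with a comparison of the cone constraints appearing in~\eqref{e:vaniglia}: the cone for $\gamma^{s_i}$ differs from the cone for $\gamma$ only because of the change of base point, and the $\|b\|_{L^\infty}$-Lipschitz regularity of $\gamma$ (property (a) of Lemma~\ref{L_2}) controls this difference. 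From this I would extract the equalities $t_*(\gamma^{s_1}) = t_*(\gamma^{s_2})$ and $\gamma^{s_1}(t_*(\gamma^{s_1})) = \gamma^{s_2}(t_*(\gamma^{s_2})) \in \{\alpha, \beta\}$ whenever $\gamma$ itself touches the boundary.

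Once the case is identified, I would evaluate the relevant formula. In Case 1, the fact that both $\gamma^{s_i}$ reach $t = 0$ in the interior, combined with $Q(0, \gamma^{s_i}(0)) = \bar h$ and the monotonicity of $Q(0, \cdot)$ in $x$ (a consequence of $\rho \ge 0$), should force $\gamma^{s_1}(0) = \gamma^{s_2}(0)$, so that~\eqref{e:sacher} delivers equal values. In Case 2, Lemma~\ref{l:strudel} gives $t_*(\gamma^{s_i}) \in E^-_{s_i}$, and then Lemma~\ref{l:cialda} implies that $E^-_{s_1}$ and $E^-_{s_2}$ agree on $[0, s_1]$. Combined with the already established identity $t_*(\gamma^{s_1}) = t_*(\gamma^{s_2})$, this yields equality of the boundary integrals in~\eqref{E_formula2}; moreover the auxiliary time $t_{\min} = \min E^-_{s_1} = \min E^-_{s_2}$ is then common, so the point $\bar y$ in~\eqref{e:limone} and the corresponding initial-datum integral match as well. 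Case 3 is completely symmetric.

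The principal obstacle I expect is the coherent case assignment itself: showing that $\gamma^{s_i}$ cannot detach from the boundary at a time strictly smaller than $t_*(\gamma)$, and that it must detach exactly at $t_*(\gamma)$ when $\gamma$ is pinned there. This I would handle by contradiction, combining the level-set characterization of the weak characteristic in~\eqref{e:vaniglia} with the monotonicity of $Q(t, \cdot)$ in $x$ and with an argument in the spirit of the proof of property (a) in Lemma~\ref{L_2}. A second, related technical point is that the limit defining $\bar y$ in~\eqref{e:limone} must be proved independent of the base time $s_i$; this I would again reduce to property (h) of Lemma~\ref{L_2}, by monotonically comparing the characteristics through $(t_{\min}, \alpha + \varepsilon)$ to those rooted at $(s_i, \gamma(s_i))$ and letting $\varepsilon \to 0^+$.
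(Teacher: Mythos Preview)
Your proposal is correct and follows essentially the same route as the paper: the heart of the argument is Lemma~\ref{l:strudel} together with Lemma~\ref{l:cialda}, which give $E^-_{\bar t}\cap[0,t_*(\gamma)]=E^-_{t_*(\gamma)}$ (and the analogous statement for $E^+$), so that $t_{\min}$, $\bar y$, $\underline t_{\min}$, $\underline y$ and the domain of the boundary integral in~\eqref{E_formula2}--\eqref{E_formula3} are independent of the base time $s\in]t_*(\gamma),t^*(\gamma)[$. The paper's proof is in fact much terser than yours---it simply records this identity and asserts the conclusion---whereas you correctly spell out the surrounding steps (coherent case assignment, constancy of $t_*(\gamma^{s_i})$, treatment of Case~1) that the paper leaves implicit.

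One small correction in your Case~1: monotonicity of $Q(0,\cdot)$ alone does not force $\gamma^{s_1}(0)=\gamma^{s_2}(0)$, since $Q(0,\cdot)$ is only non-decreasing and the level set $\{Q(0,\cdot)=\bar h\}$ may be a nondegenerate interval. What you actually need (and what suffices) is that $Q(0,\gamma^{s_1}(0))=Q(0,\gamma^{s_2}(0))=\bar h$ by property~(f) of Lemma~\ref{L_2}, so that $[\rho]_0=\partial_x Q(0,\cdot)$ vanishes a.e.\ on the interval between $\gamma^{s_1}(0)$ and $\gamma^{s_2}(0)$, and hence the two integrals in~\eqref{e:sacher} coincide even when the endpoints differ.
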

\begin{proof}
 We recall Lemma~\ref{l:strudel} and apply Lemma~\ref{l:cialda} with $\tau =t_*=t_*(\gamma_{\bar t,\bar x})$, $a=0$ and $b= \bar t$ to get 
\be \label{e:mirtillo}
E^-_{\bar t} \cap [0,t_*(\gamma_{\bar t,\bar x})]= E^-_{t_*(\gamma_{\bar t,\bar x})}.
\eq
This implies that $t_{\min}$, $\underline t_{\min}$ and $\bar y$, $\underline y$ do not 
change when $t$ varies in $]t_*(\gamma_{\bar t,x}), t^*(\gamma_{\bar t,x})[$ and hence implies that $t~\mapsto~\tilde Q(t,\gamma_{\bar t,x}(t))$ is constant .
\end{proof}
\begin{figure}
\centering 
\includegraphics[width=0.6\columnwidth]{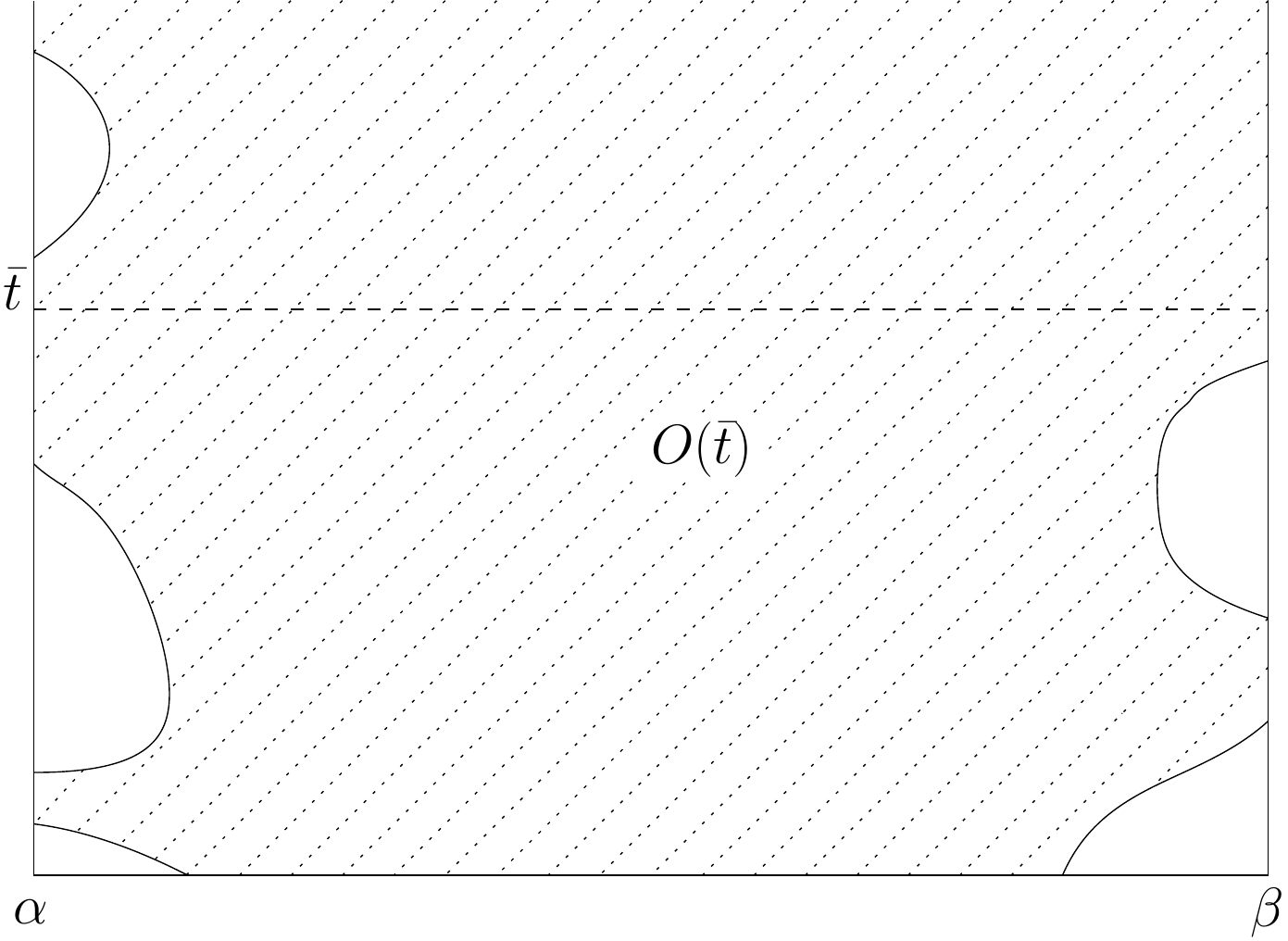}
\label{f:obart}
\caption{The set $O (\bar t)$ defined by~\eqref{e:obart}}
\end{figure}
\begin{lemma}\label{L_tildeQ}
For every $\bar t \in ]0, T[$ we set
\begin{equation} \label{e:obart}
O(\bar t):=\{(t,x) \in ]0,T[\times ]\alpha,\beta[: t_*(\gamma_{t,x})< \bar t < t^*(\gamma_{t,x})\},
\end{equation}
see Figure~\ref{f:obart}. Then there is a Lipschitz continuous function $g_{\bar t}:\R \to \R$ such that 
\begin{equation}\label{E_gbart}
\tilde Q (t,x)= g_{\bar t}(Q(t,x)) \qquad \mbox{ for every } (t,x) \in O(\bar t).
\end{equation}
Also, 
\begin{enumerate}
\item[1.] for a.e. $h \in Q(\bar t, ]\alpha,x_\alpha(\bar t)[)$ there is a unique $x \in ]\alpha,x_\alpha(\bar t)[$ such that 
$Q(\bar t,x)=h$ and 
\begin{equation}\label{E_dergalpha}
g'_{\bar t}(h)= \bar \theta (t_*(\gamma_{\bar t,x})).
\end{equation}
\item[2.] For a.e. $h \in Q(\bar t, ]x_\alpha(\bar t),x_\beta(\bar t)[)$ there is a unique $x \in ]x_\alpha(\bar t), x_\beta(\bar t)[$ such that 
$Q(\bar t,x)=h$ and 
\begin{equation}\label{E_derg_0}
g'_{\bar t}(h)= \theta_0 (\gamma_{\bar t,x}(0)).
\end{equation}
\item[3.] For a.e. $h \in Q(\bar t, ]x_\beta(\bar t), \beta[)$ there is a unique $x \in ]x_\beta(\bar t),\beta[$ such that 
$Q(\bar t,x)=h$ and 
\begin{equation}\label{E_dergbeta}
g'_{\bar t}(h)= \underline \theta (t_*(\gamma_{\bar t,x})).
\end{equation}
\end{enumerate}
\end{lemma}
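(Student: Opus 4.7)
The plan is to apply Lemma \ref{l:pastiera} with $\ell := \tilde Q(\bar t, \cdot)$ and $f := Q(\bar t, \cdot)$ on $]\alpha, \beta[$. Once I establish the bound
\begin{equation*}
|\tilde Q(\bar t, x_1) - \tilde Q(\bar t, x_2)| \le L \, |Q(\bar t, x_1) - Q(\bar t, x_2)|, \quad L := \max\{\|\theta_0\|_{L^\infty}, \|\bar\theta\|_{L^\infty}, \|\underline\theta\|_{L^\infty}\},
\end{equation*}
Lemma \ref{l:pastiera} yields a Lipschitz function $g_{\bar t}$ on $Q(\bar t, ]\alpha, \beta[)$ satisfying $\tilde Q(\bar t, \cdot) = g_{\bar t} \circ Q(\bar t, \cdot)$, and I extend $g_{\bar t}$ to all of $\R$ by constants. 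To promote \eqref{E_gbart} from the slice $\{\bar t\} \times ]\alpha, \beta[$ to the whole of $O(\bar t)$, for $(t, x) \in O(\bar t)$ I set $y := \gamma_{t, x}(\bar t) \in ]\alpha, \beta[$: property (f) of Lemma \ref{L_2} gives $Q(t, x) = Q(\bar t, y)$, Lemma \ref{l:cipria} gives $\tilde Q(t, x) = \tilde Q(\bar t, y)$, and hence $\tilde Q(t, x) = g_{\bar t}(Q(\bar t, y)) = g_{\bar t}(Q(t, x))$.

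I prove the Lipschitz-type inequality region by region on $]\alpha, x_\alpha(\bar t)[$, $]x_\alpha(\bar t), x_\beta(\bar t)[$, and $]x_\beta(\bar t), \beta[$. In the central region, \eqref{e:sacher} together with $Q(\bar t, x_i) = Q(0, \gamma_{\bar t, x_i}(0)) = \int_\alpha^{\gamma_{\bar t, x_i}(0)} [\rho]_0 \, dy$ (from property (f) of Lemma \ref{L_2} and Lemma \ref{l:crema}) gives
\begin{equation*}
|\tilde Q(\bar t, x_2) - \tilde Q(\bar t, x_1)| = \left| \int_{\gamma_{\bar t, x_1}(0)}^{\gamma_{\bar t, x_2}(0)} [\rho]_0 \theta_0 \, dy \right| \le \|\theta_0\|_{L^\infty} |Q(\bar t, x_2) - Q(\bar t, x_1)|.
\end{equation*}
In the left region, Lemma \ref{l:strudel} places each $t_*(\gamma_{\bar t, x_i})$ in $E^-_{\bar t}$, so $Q(\bar t, x_i) = U^-(Q(\cdot, \alpha), [0, \bar t])(t_*(\gamma_{\bar t, x_i}))$; integrating \eqref{E_derivatives_envelope} together with $\partial_t Q(\cdot, \alpha) = \mathrm{Tr}[\rho b](\cdot, \alpha^+)$ from Lemma \ref{l:crema} (applied with $\theta \equiv 1$) expresses the $Q$-difference as $\int_{E^-_{\bar t} \cap ]t_*(\gamma_{\bar t, x_2}), t_*(\gamma_{\bar t, x_1})[} \mathrm{Tr}[\rho b](t, \alpha^+) \, dt$, which is exactly the set of integration appearing in $\tilde Q(\bar t, x_1) - \tilde Q(\bar t, x_2)$ when read off \eqref{E_formula2} (note that $t_{\min}$ and $\bar y$ do not depend on $x_i$ and cancel). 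Since $\mathrm{Tr}[\rho b](t, \alpha^+) \le 0$ on $E^-_{\bar t}$ (it is the derivative of a non-increasing envelope), the bound with constant $\|\bar\theta\|_{L^\infty}$ follows; the right region is symmetric.

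The derivative identities \eqref{E_dergalpha}, \eqref{E_derg_0}, \eqref{E_dergbeta} then follow by differentiating $\tilde Q(\bar t, x) = g_{\bar t}(Q(\bar t, x))$ at a.e.\ $x$ and applying the chain rule: in the central region one reads from \eqref{e:sacher} and the above representation of $Q(\bar t, x)$ that $\partial_x \tilde Q(\bar t, x) = [\rho]_0(\gamma_{\bar t, x}(0)) \theta_0(\gamma_{\bar t, x}(0)) \partial_x \gamma_{\bar t, x}(0)$ and $\partial_x Q(\bar t, x) = [\rho]_0(\gamma_{\bar t, x}(0)) \partial_x \gamma_{\bar t, x}(0)$, whose quotient yields \eqref{E_derg_0} at every $x$ where $\rho(\bar t, x) > 0$; the left and right regions proceed identically with the parametrization $x \mapsto t_*(\gamma_{\bar t, x})$ in place of $x \mapsto \gamma_{\bar t, x}(0)$. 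Uniqueness of the preimage $x$ for a.e.\ $h$ comes from the monotonicity of $Q(\bar t, \cdot)$, whose flat intervals carry a Lebesgue-negligible image. The main obstacle I anticipate is the continuity of $\tilde Q(\bar t, \cdot)$ at the transition points $x_\alpha(\bar t)$ and $x_\beta(\bar t)$, which is required to patch the per-region estimates into a global one and is not transparent from the definition; it should follow from the specific choice of $\bar y$ in \eqref{e:limone}, engineered so that the backward limiting characteristic approaching $x_\alpha(\bar t)$ from the right detaches from the boundary precisely at $t = t_{\min}$, making the boundary integral in \eqref{E_formula2} vanish in the limit (since $E^-_{\bar t} \cap ]0, t_{\min}[ = \emptyset$ by the very definition of $t_{\min}$), but the verification requires a careful monotone convergence argument based on property (h) of Lemma \ref{L_2} and the structure of the envelope $U^-$.
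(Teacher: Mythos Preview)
Your overall architecture matches the paper's: establish the Lipschitz bound region by region via Lemma~\ref{l:pastiera}, patch at $x_\alpha(\bar t)$ and $x_\beta(\bar t)$, then propagate to $O(\bar t)$ using Lemma~\ref{l:cipria} and property~(f) of Lemma~\ref{L_2}. The per-region estimates you write are exactly those of the paper.

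However, your sketch of the continuity at $x_\alpha(\bar t)$ contains a genuine gap. Set $\tilde t_* := \lim_{\varepsilon\to 0^+} t_*(\gamma_{\bar t,\,x_\alpha(\bar t)-\varepsilon})$. The left limit $\tilde Q(\bar t, x_\alpha(\bar t)^-)$ carries the boundary integral over $E^-_{\bar t}\cap\,]0,\tilde t_*[$, not over $]0,t_{\min}[$. Since each $t_*(\gamma_{\bar t,x})\in E^-_{\bar t}$ and $E^-_{\bar t}$ is closed, one only knows $\tilde t_*\ge t_{\min}$; in general $\tilde t_*>t_{\min}$ is possible, so your argument ``$E^-_{\bar t}\cap\,]0,t_{\min}[=\emptyset$'' does not make the integral vanish. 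The paper handles this by proving the nontrivial equality $Q(\tilde t_*,\alpha)=Q(t_{\min},\alpha)$ via a contradiction argument (its {\sc Step~4}): this forces $U^-(Q(\cdot,\alpha),[0,\bar t])$ to be constant on $[t_{\min},\tilde t_*]$, whence $\mathrm{Tr}[\rho b](\cdot,\alpha^+)=\partial_t U^-=0$ a.e.\ on $E^-_{\bar t}\cap[t_{\min},\tilde t_*]$, and only then does the boundary integral vanish. The same equality is needed a second time to match the two initial-data integrals, since one must show $Q(0,\bar y)=Q(0,\tilde\gamma^+_\alpha(0))$ (hence $[\rho]_0=0$ between $\bar y$ and $\tilde\gamma^+_\alpha(0)$), and this again reduces to $Q(\tilde t_*,\alpha)=Q(t_{\min},\alpha)$.

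A smaller point: your derivation of \eqref{E_derg_0} via $\partial_x\tilde Q(\bar t,x)=[\rho]_0(\gamma_{\bar t,x}(0))\,\theta_0(\gamma_{\bar t,x}(0))\,\partial_x\gamma_{\bar t,x}(0)$ is not rigorous as written, since $[\rho]_0$ and $\theta_0$ are merely $L^\infty$ and $x\mapsto\gamma_{\bar t,x}(0)$ is only monotone. The paper avoids this by working directly in the $h$-variable: it singles out the full-measure set of $h$ with a unique preimage $z_h$ that is a Lebesgue point of $\theta_0$ with respect to $[\rho]_0\Leb^1$, and computes $g'_{\bar t}(h)$ as a limit of difference quotients at $z_h$. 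Your idea is morally the same but needs this Lebesgue-point refinement to be correct.
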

\begin{proof}
We separately define the function $g_{\bar t}$ on the sets $Q(\bar t, ]\alpha, x_\alpha(\bar t)[)$, $Q(\bar t, ]x_\alpha(\bar t), x_\beta (\bar t)[)$ and $Q(\bar t, ]x_\beta(\bar t), \beta[)$ and then patch together the various definitions. The proof is organized into several steps. \\
{\sc Step 1:} we define the function $g_{\bar t}$ on the set $Q(\bar t, ]x_\alpha(\bar t), x_\beta (\bar t)[)$. First, we point out that, if $x_1,x_2 \in ]x_\alpha(\bar t),x_\beta(\bar t)[$, then
\inputencoding{latin1}
\begin{equation*}
\begin{split}
|\tilde Q(\bar t,x_2) - \tilde Q(\bar t,x_1)| \stackrel{\eqref{e:sacher}}{=} &~  \left| \int_0^{\gamma_{\bar t, x_2}(0)} [\rho]_0 \theta_0 (x) dx -  \int_0^{\gamma_{\bar t, x_1}(0)} [\rho]_0 \theta_0 (x) dx \right| \le \int_{\gamma_{\bar t,x_1}(0)}^{\gamma_{\bar t,x_2}(0)} [\rho]_0 |\theta_0|(x) dx \\
\le &~ \|\theta_0\|_{L^\infty} \int_{\gamma_{\bar t,x_1}(0)}^{\gamma_{\bar t,x_2}(0)} [\rho]_0 (x) dx =  \|\theta_0\|_{L^\infty} | Q(\bar t,x_2) - Q(\bar t,x_1)|.
\end{split}
\end{equation*}
To establish the last inequality at the first line we have used the fact that, since $\rho \ge 0$, then $[\rho]_0 \ge0$.  By relying on Lemma~\ref{l:pastiera} we conclude that there is a Lipschitz continuous function $g_{\bar t,0}: Q(\bar t, ]x_\alpha(\bar t),x_\beta(\bar t)[)\to \R$ such that $\tilde Q(\bar t,x)= g_{\bar t,0}(Q(\bar t,x))$,  
for every $x \in ]x_\alpha(\bar t),x_\beta(\bar t)[$.

We now establish \eqref{E_derg_0}. First, we recall property (f) in Lemma~\ref{L_2} and the definition~\eqref{e:gelato} of $x_\alpha(\bar t)$ and $x_\beta (\bar t)$ and we conclude that $Q(\bar t,]x_\alpha(\bar t),x_\beta(\bar t)[) \subseteq Q(0, ]\alpha, \beta[)$.  We denote by $\mathcal R_0$ the set of values $h \in Q(\bar t,]x_\alpha(\bar t),x_\beta(\bar t)[)$ such that i) there is a unique $x_h \in ]x_\alpha(\bar t),x_\beta(\bar t)[$ such that $Q (\bar t, x_h)=h$; ii)
there is a unique $z_h \in ]\alpha,\beta[$ such that $Q(0,z_h)=h$ and furthermore iii) $z_h$ is a Lebesgue point of $\theta_0$ with respect to the measure $[\rho]_0 \Leb^1$, namely $z_h \in \mathrm{supp} \left([\rho]_0 \Leb^1\right)$ and 
\begin{equation}\label{E_Lebesgue_0}
\lim_{\varepsilon \to 0^+}\frac{\displaystyle{\int_{z_h-\varepsilon}^{z_h+\varepsilon}[\rho]_0|\theta_0(y)-\theta_0(z_h)|\,dy}}{\displaystyle{\int_{z_h-\varepsilon}^{z_h+\varepsilon}[\rho]_0(y)\,dy}}=0.
\end{equation}
Note that in the previous expression the integral at the denominator does not vanish since $z_h \in \mathrm{supp} \left([\rho]_0 \Leb^1\right)$. Note furthermore that, owing to the monotonicity of $Q$ with respect to $x$, if $x_h$ and $z_h$ are as in points i) and ii) then $z_h = \gamma_{\bar t, x_h} (0)$.
Assume for a moment that we have shown that $\Leb^1(Q(\bar t,]x_\alpha(\bar t),x_\beta(\bar t)[)\setminus \mathcal R_0)=0$.
Fix $h \in \mathcal R_0$ and the corresponding $z_h\in ]\alpha,\beta[$ such that $Q(0,z_h)=h$: owing to  \eqref{E_Lebesgue_0} we have 
\begin{equation*}
g_{\bar t,0}'(h) = \lim_{\varepsilon \to 0^+}\frac{\tilde Q(0,z_h +\varepsilon) -\tilde Q(0,z_h -\varepsilon) }{Q(0,z_h+\varepsilon) - Q(0,z_h-\varepsilon)} = \theta_0(z_h) = \theta_0 (\gamma_{\bar t, x_h} (0)),
\end{equation*}
that is~\eqref{E_derg_0}. 

We are left to show that $\Leb^1(Q(\bar t,]x_\alpha(\bar t),x_\beta(\bar t)[)\setminus \mathcal R_0)=0$. Note that 
$
   \mathcal R_0 = \mathcal R_1 \cap \mathcal R_2 \cap \mathcal R_3,
$
where $\mathcal R_1$ is the set of $h$ such that there is a unique $x \in ]x_\alpha(\bar t),x_\beta(\bar t)[$ satisfying $Q (\bar t, x)=h$, $\mathcal R_2$
 is the set of $h$ such that there is a unique $z \in ]x_\alpha(\bar t),x_\beta(\bar t)[$ satisfying $Q (0, z)=h$ and 
$$
    \mathcal R_3  := Q(\bar t; ]x_\alpha(\bar t),x_\beta(\bar t)[) \cap  
     Q(0, L), \quad \text{where} \;
     L:= \{ z \; \text{is a Lebesgue point for $[\rho]_0 \Leb^1$ on $ ]\alpha, \beta[$}  
     \}. 
$$
To show that $\Leb^1(Q(\bar t,]x_\alpha(\bar t),x_\beta(\bar t)[)\setminus \mathcal R_0)=0$ it suffices to show that $\Leb^1(Q(\bar t,]x_\alpha(\bar t),x_\beta(\bar t)[)\setminus \mathcal R_i)=0$ for $i=1, 2, 3$. Since $Q(\bar t, \cdot)$ and $Q(0, \cdot)$ are both monotone functions, then $\Leb^1(Q(\bar t,]x_\alpha(\bar t),x_\beta(\bar t)[)\setminus \mathcal R_1)=0$ and $\Leb^1(Q(\bar t,]x_\alpha(\bar t),x_\beta(\bar t)[)\setminus \mathcal R_2)=0$. To establish the last inequality, we point out that $L$ is a $\Leb^1$-measurable set and that, owing to the Lebesgue Differentiation Theorem, $[\rho]_0 \Leb^1 ( ]\alpha, \beta[ \setminus L)=0$. Owing to the Coarea Formula we get 
$$
    0= \int_{]\alpha, \beta[ \setminus L} [\rho]_0 (x) dx= \int_{Q(0, ]\alpha, \beta[ \setminus L)} \mathcal H^0 (\{x: \;  Q(0, x) =h \} ) d h ,
$$
which yields $\Leb^1 (Q(0, ]\alpha, \beta[ \setminus L))=0$. To conclude, we recall that $Q(\bar t, ]x_\alpha (\bar t), x_\beta (\bar t)[) \subseteq Q(0, ]\alpha, \beta[)$, which implies $Q(\bar t, ]x_\alpha (\bar t), x_\beta (\bar t)[) \setminus \mathcal R_3 \subseteq Q(0, ]\alpha, \beta[) \setminus Q(0, L) \subseteq Q(0, ]\alpha, \beta[ \setminus L)$. \\
{\sc Step 2:} we define the function $g_{\bar t}$ on the 
set $]\alpha,x_\alpha(\bar t)[$. As in {\sc Step 1} we want to rely on Lemma~\ref{l:pastiera}, so we fix $x_1,x_2 \in ]\alpha,x_\alpha(\bar t)[$, $x_1 < x_2$. 
By the monotonicity property at point (h) in Lemma \ref{L_2}, we have $t_*(\gamma_{\bar t,x_2})\le  t_*(\gamma_{\bar t,x_1})$. Also, the point $\bar y$ defined in~\eqref{e:limone}  is the same for both $x_1$ and $x_2$. This yields
\begin{equation}\label{E_x_12alpha}
\begin{split}
&|\tilde Q(\bar t,x_2) - \tilde Q(\bar t,x_1)| = ~ \left| \int_{E^-_{\bar t} \cap [t_*(\gamma_{\bar t,x_2}), t_*(\gamma_{\bar t,x_1})] } \mathrm{Tr}[\rho b](t,\alpha^+) \bar \theta(t) dt\right|\\
&\stackrel{\eqref{E_trace_boundary}}{=} \left|\int_{E^-_{\bar t}\cap [t_*(\gamma_{\bar t,x_2}), t_*(\gamma_{\bar t,x_1})]} \partial_t Q(t,\alpha) \bar \theta(t)  dt \right| \\
\stackrel{\eqref{E_derivatives_envelope}}{=} & \left|\int_{E^-_{\bar t}\cap [t_*(\gamma_{\bar t,x_2}), t_*(\gamma_{\bar t,x_1})]} \partial_t
U^-(t) \bar \theta(t) dt\right| \leq  \|\bar \theta\|_{L^\infty}
 \int_{E^-_{\bar t}\cap [t_*(\gamma_{\bar t,x_2}), t_*(\gamma_{\bar t,x_1})]} - \partial_t
U^-(t) dt \\
  & = 
 \|\bar \theta\|_{L^\infty} \left|U^-(t_*(\gamma_{\bar t,x_2}))- U^-(t_*(\gamma_{\bar t,x_1}))\right| 
\stackrel{\text{Lemma~\ref{l:strudel}}}{=} ~ \|\bar \theta\|_{L^\infty} | Q(\bar t,x_2) -  Q(\bar t,x_1)|.
\end{split}
\end{equation}
In the previous expression we have written $U^-$ instead of $U^-(Q(\alpha,\cdot),  [0, \bar t])$ to simplify the notation. Owing to Lemma~\ref{l:pastiera}, this implies that one can define
 a Lipschitz continuous function $g_{\bar t,\alpha}: Q(\bar t, ]\alpha,x_\alpha(\bar t)[)\to~\R$ such that 
for every $x \in ]\alpha, x_\alpha(\bar t)[$ we have $\tilde Q(\bar t,x)= g_{\bar t,\alpha}(Q(\bar t,x))$. To establish~\eqref{E_dergalpha} we first recall that, owing to property ({f}) in 
Lemma~\ref{L_2}, $Q(t_*(\gamma_{\bar t,x}), \alpha) = Q(\bar t, x)$ for every $x \in ]\alpha, x_\alpha (\bar t)[$. Owing to Lemma~\ref{l:strudel}, this implies that $Q(\bar t, ]\alpha, x_\alpha (\bar t)[) \subseteq U^- (Q(\cdot, \alpha), [0, \bar t]) ([0, \bar t] )$. We now term $\mathcal R_\alpha(\bar t)$ the set of values $h \in Q(\bar t,]\alpha, x_\alpha(\bar t)[)$ such that i) there is a unique $x_h \in ]\alpha, x_\alpha (\bar t)[$ satisfying $Q(\bar t, x_h) = h$; ii) there is a unique $t_h \in ]0, \bar t[$ satisfying $U^- (Q(\cdot, \alpha), [0, \bar t]) (t_h)=h$ and furthermore iii) $t_h$ is a Lebesgue point for the measure $ - d U^- (Q(\cdot, \alpha), [0, \bar t])/ dt \Leb^1$.
Note that if $t_h$ is as in ii), then owing to Lemma~\ref{L_envelope} we have $t_h \in E^-_{\bar t}$ and hence by~\eqref{E_trace_boundary} point iii) could be equivalently riformulated by requiring that $t_h$ is a Lebesgue point for the measure $- \mathrm{Tr}[b \rho] (\cdot, \alpha^+) \Leb^1 \llcorner E^-_{\bar t}$. Also, if $x_h$ and $t_h$ are in points i) and ii), then $t_h = t_\ast (\gamma_{\bar t, x_h})$. By arguing as in {\sc Step 1} one  can show that $\mathcal \Leb^1\left(  Q(\bar t,]\alpha, x_\alpha(\bar t)[) \setminus \mathcal R_\alpha \right)=0$ and that for every $h \in \mathcal R_\alpha$ we have
 \begin{equation*}
 g'_{\bar t,\alpha}(h) = \lim_{\varepsilon \to 0}\frac{\tilde Q(t_h+\varepsilon,\alpha) -\tilde Q(t_h-\varepsilon,\alpha) }{ Q(t_h+\varepsilon,\alpha) - Q(t_h-\varepsilon,\alpha)} =\bar \theta(t_h)  = \bar \theta(t_*(\gamma_{\bar t,x_h})).
 \end{equation*}
By relying on the same argument one can show that there is a Lipschitz continuous function $g_{\bar t,\beta}: Q(\bar t, ]x_\beta(\bar t),\beta[)\to \R$ such that 
$\tilde Q(\bar t,x)= g_{\bar t,\beta}(Q(t,x))$  
for every $x \in ]x_\beta(\bar t),\beta[$. Also, 
 \begin{equation*}
 g'_{\bar t,\beta}(h) = \underline \theta(t_*(\gamma_{\bar t,x})), \quad \text{for a.e. $h \in Q(\bar t, ]x_\beta(\bar t),\beta[)$},
 \end{equation*}
 provided $x \in ]x_\beta(\bar t),\beta[$ satisfies $\tilde Q(t,x)=h$. \\
{\sc Step 3:} we patch together the definitions of $g_{\bar t}$ given at the previous steps. First, we recall that the function $Q(\bar t, \cdot)$ is Lipschitz continuous and monotone non-increasing, which implies 
\be \label{e:frappe} \begin{split}
  &  \sup  Q(\bar t, ]\alpha, x_\alpha(\bar t)[) =
    \lim_{x \to x_\alpha(\bar t)^-}  Q(\bar t, x) =
   \lim_{x \to x_\alpha(\bar t)^+}  Q(\bar t, x)=
    \inf  Q(\bar t, ]x_\alpha(\bar t), x_\beta (\bar t)[), 
   \\ &
     \sup  Q(\bar t, ]x_\alpha(\bar t), x_\beta (\bar t)[)=
     \lim_{x \to x_\beta(\bar t)^-}  Q(\bar t, x) =
   \lim_{x \to x_\beta(\bar t)^+}  Q(\bar t, x)=
    \inf  Q(\bar t, ]x_\beta (\bar t), \beta[)
\end{split}
\eq  
 if both $x_\alpha (\bar t), x_\beta (\bar t) \in ]\alpha, \beta[$. 
We set
\begin{equation*}
g_{\bar t}(h) := 
\begin{cases}
 g_{\bar t,\alpha}(h) &\mbox{if } h \in \overline{ Q(\bar t, ]\alpha, x_\alpha(\bar t)[)},\\
 g_{\bar t,0}(h) &\mbox{if } h \in \overline{ Q(\bar t, ]x_\alpha(\bar t),x_\beta(\bar t)[)},\\
 g_{\bar t,\beta}(h) &\mbox{if } h \in \overline{ Q(\bar t,]x_\beta(\bar t),\beta[)}.
\end{cases}
\end{equation*}
Owing to~\eqref{e:frappe}, in order to prove that $g_{\bar t}$ is well-defined it suffices to show that, if $x_\alpha (\bar t), x_\beta (\bar t) \in ]\alpha, \beta[$ (the other cases are actually simpler), then $\tilde Q(\bar t,\cdot)$ is continuous at both $x_\alpha(\bar t)$ and $x_\beta(\bar t)$. We only establish the continuity at $x_\alpha(\bar t)$, the other proof is analogous. We assume that $x_\alpha(\bar t) \in ]\alpha,\beta[$ and define the two curves $\tilde \gamma_\alpha^\pm:[0,T] \to [\alpha,\beta]$ by setting
\begin{equation*}
\tilde\gamma^+_\alpha (t) : = \lim_{\varepsilon \to 0^+} \gamma_{\bar t,x_\alpha(\bar t)+\varepsilon}(t) \qquad \mbox{and} \qquad
\tilde\gamma^-_\alpha (t) : = \lim_{\varepsilon \to 0^+} \gamma_{\bar t,x_\alpha(\bar t)-\varepsilon}(t).
\end{equation*}
We also set 
\begin{equation} \label{e:bluenavy}
\tilde t_*:= \lim_{\varepsilon \to 0^+} t_*(\gamma_{\bar t,x_\alpha(\bar t)-\varepsilon}).
\end{equation}
Owing to Lemma~\ref{l:strudel}, $t_*(\gamma_{\bar t,x}) \in E^-_{\bar t}$ for every $x< x_\alpha (\bar t)$. Since by Lemma~\ref{L_envelope}  $E^-_t$ is a closed set, then  
$\tilde t_* \in E^-_{\bar t}$ and this implies  $\tilde t_* \ge t_{\min}$, where $t_{\min}$ is the same as in~\eqref{e:limone}. 

By definition of $\tilde Q$ we have
\begin{equation}\label{E_tildeQxalpha}
\begin{split}
\lim_{x \to x_\alpha(\bar t)^-}\tilde Q(\bar t, x)
= \tilde Q(\bar t, x_\alpha(\bar t)^-)=&~  \int_\alpha^{\bar y}[\rho]_0 (x)\theta_0(x)dx + \int_{E^-_{\bar t} \cap ]0,\tilde t_*[} \mathrm{Tr}[\rho b](t,\alpha^+) \bar \theta(t)dt, \\
\lim_{x \to x_\alpha(\bar t)^+} \tilde Q(\bar t, x) = \tilde Q(\bar t, x_\alpha(\bar t)^+)=&~\int_\alpha^{\tilde \gamma^+_\alpha(0)}[\rho]_0 (x)\theta_0(x)dx 
\end{split}
\end{equation}
In {\sc Step 4} below we show that 
$Q (\tilde t_*,\alpha) = Q(t_{\min},\alpha)$. 
Since $\tilde t_*, t_{\min} \in E^-_{\bar t}$, this yields 
$U^-(Q(\cdot, \alpha),[0,\bar t])(\tilde t_*)=U^-(Q(\cdot, \alpha),[0,\bar t])(t_{\min})$. We then have 
\begin{equation*}
\begin{split}
 \mathrm{Tr}[\rho b](\cdot, \alpha^+) \llcorner \big(E^-_{\bar t}\cap ]0,\tilde t_*[\big) \stackrel{\text{Lemma~\ref{l:crema}}}{=} &~ - \partial_t Q(t,\alpha) \llcorner \big(E^-_{\bar t}\cap ]0 , \tilde t_*[\big) \\
 \stackrel{\text{Lemma~\ref{L_envelope}}}{=} &~ - \partial_t U^-(Q(\cdot, \alpha),[0,\bar t]) \llcorner \big(E^-_{\bar t}\cap ]0,\tilde t_*[\big) = 0
 \end{split}
\end{equation*}
and this implies $\int_{E^-_{\bar t} \cap ]0,\tilde t_*[} \mathrm{Tr}[\rho b](t,\alpha^+) \bar \theta(t)dt=0$. Owing to~\eqref{E_tildeQxalpha}, in order to show that 
$\tilde Q(\bar t, x_\alpha(\bar t)^-)=\tilde Q(\bar t, x_\alpha(\bar t)^+)$ we are left to prove that 
$$ \int_\alpha^{\bar y}[\rho]_0 (x)\theta_0(x)dx = \int_\alpha^{\tilde \gamma^+_\alpha(0)}[\rho]_0 (x)\theta_0(x)dx.$$ 
It suffices to show that $[\rho]_0 =0$ a.e. on the interval with endpoints $\bar y$ and $\tilde  \gamma^+_\alpha(0)$, which is equivalent to prove that $Q(0, \bar y)=Q(0, \tilde \gamma^+_\alpha(0))$. To see this, we first show that 
\be \label{e:sorbetto}
Q(t,\alpha)< Q(t_{\min}, \alpha), \quad 
\text{for every $t \in ]0, t_{\min}[$}. 
\eq
Assume by contradiction that $t_{\min}>0$ and there is 
$\tau \in ]0, t_{\min}[$ such that $Q(\tau,\alpha)\ge Q(t_{\min}, \alpha)$. Since by definition $t_{\min}= \min E^-_{\bar t}$, then $U^-(Q(\cdot, \alpha),[0,\bar t]) (t) = U^-(Q(\cdot, \alpha),[0,\bar t])(t_{\min})= Q(t_{\min}, \alpha)$ for every $t \in ]0, t_{\min}[$. Since $U^-(Q(\cdot, \alpha),[0,\bar t]) \ge 
Q(\cdot,\alpha)$, this implies $U^-(Q(\cdot, \alpha),[0,\bar t]) (\tau)=
Q(\tau,\alpha)$, which contradicts the minimality of $ t_{\min}$ and hence establishes~\eqref{e:sorbetto}. 
We now recall property  {(f)} in Lemma~\ref{L_2} and that, for every $t \in [0, T]$ the function $Q(t, \cdot)$ is monotone non-decreasing. Owing to~\eqref{e:sorbetto}, this implies that $\gamma_{t_{\min},\alpha +\varepsilon} (t) >\alpha$ for every $\varepsilon>0$  and every $ t \in ]0,t_{\min}[$.
By Point {(f)} in Lemma \ref{L_2} this yields $Q(0,\gamma_{t_{\min},\alpha +\varepsilon} (0))=Q(t_{\min},\alpha+\varepsilon)$ and by letting
$\varepsilon \to 0$ we arrive at
\begin{equation}\label{E_equality1}
Q(0,\bar y)=Q(t_{\min},\alpha).
\end{equation}
On the other hand, $ Q(\bar t,x_\alpha(\bar t)) = Q(0,\tilde \gamma^+_\alpha(0))=Q(\tilde t_*,\alpha)$. Since 
$Q(\tilde t_*,\alpha) = Q(t_{\min},\alpha)$ by {\sc Step 4}, then owing to \eqref{E_equality1} we have $Q(0, \bar y)=Q(0, \tilde \gamma^+_\alpha(0))$.
This concludes the proof of the continuity of $\tilde Q(\bar t,\cdot)$ at the point $x_\alpha(\bar t)$.
\\
{\sc Step 4:} we establish the equality $Q(\tilde t_*,\alpha) = Q(t_{\min},\alpha)$. Since $t_{\min},\tilde t_* \in E^-_{\bar t}$ and $\tilde t_* \ge t_{\min}$, then 
\begin{equation*}
Q (\tilde t_*,\alpha) = U^-(Q(\cdot, \alpha),[0,\bar t])(\tilde t_*) \le  U^-(Q(\cdot, \alpha),[0,\bar t])(t_{\min})= Q(t_{\min},\alpha). 
\end{equation*}
Assume by contradiction that  $Q (\tilde t_*,\alpha) < Q(t_{\min},\alpha)$. 
Since $U^-(Q(\cdot, \alpha),[0,\bar t])$ is a monotone non-decreasing function, there is $h \in ]Q (\tilde t_*,\alpha), Q(t_{\min},\alpha)[$ such that 
there is a unique $t_h \in ]t_{\min}, \tilde t_*[$ satisfying $U^-(Q(\cdot, \alpha),[0,\bar t])(t_h)=h$. Note that $t_h \in E^-_{\bar t}$ and that for 
every $t \in ]t_h,\bar t[$ we have $Q(t,\alpha)\leq 
U^-(Q(\cdot, \alpha),[0,\bar t])(t) < U^-(Q(\cdot, \alpha),[0,\bar t])(t_h)= Q(t_h, \alpha)=h$. By using property {(f)} in Lemma~\ref{L_2} and the monotonicity of $Q(t, \cdot)$ this implies that,  for every $\varepsilon>0$ and every $t \in [t_h,\bar t]$, we have $\gamma_{t_{h},\alpha + \varepsilon}(t)>\alpha$ and this in turn implies 
$$
 Q(\bar t,\gamma_{t_h,\alpha+\varepsilon}(\bar t))= Q(t_h,\alpha+\varepsilon)\ge h >  Q (\tilde t_*, \alpha) =
Q(\bar t, x_\alpha(\bar t)).
$$
 By the monotonicity of $Q(\bar t,\cdot)$, this yields  $x_\alpha(\bar t)< x_h:=\lim_{\varepsilon \to 0^+}\gamma_{t_h,\alpha + \varepsilon}(\bar t)$.
Note that,  by the definition of $x_{\alpha}(\bar t)$, for every $ x \in ]x_\alpha(\bar t),x_h[$ we have $t_*(\gamma_{\bar t,x})=0$. On the other hand, by the monotonicity property  {(h)} in Lemma \ref{L_2}, for every  $ x \in ]x_\alpha(\bar t),x_h[$ and every $\varepsilon>0$ we have $\gamma_{\bar t,x} (\cdot) \leq  \gamma_{t_h,\alpha + \varepsilon} (\cdot)$ and this implies 
$t_*(\gamma_{\bar t,x})\ge t_h>0$. This yields a  contradiction and establishes the equality
$Q (\tilde t_*,\alpha) = Q(t_{\min},\alpha)$. \\
{\sc Step 5:} we are left to show that \eqref{E_gbart} holds for every $(t,x) \in O(\bar t)$. So far we have shown that \eqref{E_gbart} holds for every $(t,x) \in ]0,T[\times ]\alpha,\beta[$ with $t=\bar t$. To conclude we recall that $Q$ is constant along $\gamma_{\bar t,x}$ owing to property {(f)} in Lemma~\ref{L_2} and we recall Lemma~\ref{l:cipria}. 
\end{proof}
\begin{remark}
It follows immediately from the statement of Lemma \ref{L_tildeQ} that for every $t_1,t_2 \in ]0,T[$ it holds $g_{t_1}=g_{t_2}$ in $Q(O(t_1)) \cap Q(O(t_2))$.
\end{remark}
\subsection{Proof of Theorem~\ref{t:genex}: existence} \label{ss:indaco}
The following result establishes the existence of a solution of~\eqref{IVPtheta2}
by exhibiting an explicit formula. Note that by relying on~\eqref{e:golfino} below we conclude that the function  $\tilde Q$ constructed in \S\ref{ss:giallo} is a potential function for $(\rho \theta, b \rho \theta)$. 
\begin{proposition}\label{P_existence}
Let $\tilde Q$ be as in \S\ref{ss:giallo} and $g_{ t}$ be as in Lemma~\ref{L_tildeQ}. 
Then the function
\begin{equation}\label{E_def_theta}
\theta(t,x):= \begin{cases}
g'_{t} (Q(t,x)) &\mbox{if } g_{t} \mbox{ is differentiable at }Q(t,x), \\
0 & \mbox{otherwise}.
\end{cases}
\end{equation}
is a solution of the initial-boundary value problem~\eqref{IVPtheta2} in the sense of Definition~\ref{d:ibvp}.
Also, 
\begin{equation}\label{E_theta_characteristics}
\theta(t,x) = 
\begin{cases}
\bar \theta( t_*(\gamma_{t,x})) & \mbox{if } \alpha<x<x_{\alpha}(t) \\
\theta_0(\gamma_{t,x}(0)) & \mbox{if } x_\alpha(t)<x<x_{\beta}(t)\\
\underline \theta( t_*(\gamma_{t,x})) & \mbox{if } x_\beta(t)<x<\beta
\end{cases}
\end{equation} 
for $\rho \Leb^2$-a.e. $(t,x) \in ]0,T[ \times ]\alpha,\beta[$.
\end{proposition}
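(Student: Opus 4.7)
The strategy is to verify that $\tilde Q$ from \S\ref{ss:giallo} is the potential $Q_\theta$ from \eqref{E_def_Q} associated with the function $\theta$ defined by \eqref{E_def_theta}. Once this is established, \eqref{e:suppin2} follows from equality of mixed distributional derivatives, while \eqref{e:dataibvp2} is read off the values of $\tilde Q$ on the parabolic boundary via Lemma~\ref{l:crema}.

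The first step is to prove that $\partial_x \tilde Q = \rho\theta$ and $\partial_t \tilde Q = -b\rho\theta$ a.e.\ on $]0,T[\times]\alpha,\beta[$. Fix $(t_0,x_0)\in ]0,T[\times]\alpha,\beta[$: by Lemma~\ref{L_tildeQ} we have $\tilde Q = g_{t_0}\circ Q$ on an open neighbourhood of $(t_0,x_0)$ in $O(t_0)$. Since both $g_{t_0}$ and $Q$ are Lipschitz, the chain rule applies at every point where $g_{t_0}$ is differentiable at $Q(t,x)$, giving $\partial_x \tilde Q = g'_{t_0}(Q)\,\partial_x Q = g'_{t_0}(Q)\rho = \rho\theta$ and, analogously, $\partial_t \tilde Q = -b\rho\theta$. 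At the remaining points, the coarea formula applied to $Q$ shows that $\nabla Q = 0$ $\Leb^2$-a.e.\ on the preimage of the $\Leb^1$-null set on which $g_{t_0}$ fails to be differentiable, hence $\rho=0$ there and both equalities read $0=0$. This identifies $\tilde Q$ with $Q_\theta$: the two share the same gradient a.e., and $\tilde Q(0,\alpha) = 0 = Q_\theta(0,\alpha)$ (the former by the Case 1 formula \eqref{e:sacher} with $\gamma_{0,\alpha}(0)=\alpha$). Then \eqref{e:suppin2} is an immediate consequence of mixed-partial exchange: for every $\phi\in C^\infty_c(]0,T[\times]\alpha,\beta[)$,
\begin{equation*}
\iint \rho\theta\,(\partial_t\phi + b\,\partial_x\phi)\,dx\,dt = \iint \bigl(\partial_x \tilde Q\,\partial_t\phi - \partial_t \tilde Q\,\partial_x\phi\bigr)\,dx\,dt = -\iint \tilde Q\,(\partial_{xt}\phi - \partial_{tx}\phi)\,dx\,dt = 0.
\end{equation*}

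Next I would verify the initial and boundary conditions by exploiting the explicit formulas of \S\ref{sss:tildeq}. For the initial datum, $t_*(\gamma_{0,x})=0$ for every $x\in]\alpha,\beta[$, so $x_\alpha(0)=\alpha$, $x_\beta(0)=\beta$, and Case 1 gives $\tilde Q(0,x) = \int_\alpha^x [\rho]_0\theta_0\,dx'$; combined with \eqref{E_trace_initial} applied to $Q_\theta=\tilde Q$, this yields $[\rho\theta]_0 = [\rho]_0 \theta_0$. At $x=\alpha$, I would let $\bar x\to\alpha^+$ in \eqref{E_formula2}, using $t_*(\gamma_{\bar t,\alpha^+}) = \bar t$, to obtain
\begin{equation*}
\tilde Q(\bar t,\alpha) = \int_\alpha^{\bar y(\bar t)}[\rho]_0\theta_0\,dx + \int_{E^-_{\bar t}\cap\,]0,\bar t[}\mathrm{Tr}[\rho b](t,\alpha^+)\bar\theta(t)\,dt,
\end{equation*}
and then differentiate at a generic $\bar t$ where $\mathrm{Tr}[\rho b](\bar t,\alpha^+)=\partial_t Q(\bar t,\alpha)<0$. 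A combination of Lemma~\ref{L_envelope} (which makes the integrand vanish off $E^-_{\bar t}$) and Lemma~\ref{l:cialda} (which, for $\bar t$ with $\partial_t Q(\bar t,\alpha)<0$, forces $E^-_{\bar t'}\cap[0,\bar t] = E^-_{\bar t}$ for $\bar t'$ close to $\bar t$, so that $\bar y$ and $t_{\min}$ are locally constant in $\bar t$) shows that only the endpoint $\bar t$ contributes, whence $\partial_t \tilde Q(\bar t,\alpha) = \mathrm{Tr}[\rho b](\bar t,\alpha^+)\bar\theta(\bar t)$; by \eqref{E_trace_boundary} this is the first identity in \eqref{e:dataibvp2}. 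The symmetric argument based on Case 3 and $L^+$ handles $x=\beta$.

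Finally, \eqref{E_theta_characteristics} is a direct reading of \eqref{E_dergalpha}--\eqref{E_dergbeta}: at $\rho\Leb^2$-a.e.\ $(t,x)$ one has $\rho(t,x)>0$, so $Q(t,\cdot)$ is locally strictly increasing at $x$, the value $Q(t,x)$ lies in the full-$\Leb^1$-measure subset of the image singled out in Lemma~\ref{L_tildeQ}, and the corresponding $g'_t(Q(t,x))$ is exactly $\bar\theta(t_*(\gamma_{t,x}))$, $\theta_0(\gamma_{t,x}(0))$, or $\underline\theta(t_*(\gamma_{t,x}))$ according to which of the three regions $x$ belongs to. I anticipate that the genuine technical obstacle is the boundary-trace step above: the envelope $U^-(Q(\cdot,\alpha),[0,\bar t])$ depends on $\bar t$ in a non-smooth way, and one has to check rigorously that both the moving integration domain $E^-_{\bar t}\cap]0,\bar t[$ and the auxiliary quantity $\bar y(\bar t)$ can only vary on pieces where the integrand $\mathrm{Tr}[\rho b](\cdot,\alpha^+)$ vanishes, so that they do not contribute spurious terms to $\partial_t \tilde Q(\cdot,\alpha)$.
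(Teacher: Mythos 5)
The overall strategy is the same as the paper's: verify that the Lipschitz function $\tilde Q$ from \S\ref{ss:giallo} is the potential associated with the function $\theta$ of \eqref{E_def_theta} (i.e., $\partial_x \tilde Q = \rho\theta$, $\partial_t \tilde Q = -b\rho\theta$ a.e., via the chain rule of Lemma~\ref{L_tildeQ}), then read off the interior equation, the initial datum, and the boundary traces. Your Step~1 (interior equation via anticommuting mixed partials) matches the paper's {\sc Step~1}, and your final paragraph for \eqref{E_theta_characteristics} is close in spirit to the paper's {\sc Step~4} via the coarea formula, although the phrase ``at $\rho\Leb^2$-a.e.\ $(t,x)$ one has $\rho(t,x)>0$, so $Q(t,\cdot)$ is locally strictly increasing at $x$'' is not literally correct (pointwise positivity of an $L^\infty$ function at a single point does not force local strict monotonicity of $Q(t,\cdot)$); the coarea argument as in the paper is what actually makes this work.

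The genuine gap is exactly the one you flag yourself: the boundary-trace step. You propose taking $\bar x \to \alpha^+$ in \eqref{E_formula2} and then differentiating $\tilde Q(\cdot,\alpha)$ in $\bar t$, invoking Lemma~\ref{l:cialda} to claim that $E^-_{\bar t'}\cap[0,\bar t]=E^-_{\bar t}$ and that $\bar y$, $t_{\min}$ are locally constant near $\bar t$ whenever $\partial_t Q(\bar t,\alpha)<0$. But Lemma~\ref{l:cialda} requires $\bar t \in E^-_{\bar t'}$, which is not automatic for nearby $\bar t'$, and more importantly the domain $E^-_{\bar t}\cap\,]0,t_*(\gamma_{\bar t,\bar x})[$ depends on $\bar t$ both through the envelope $E^-_{\bar t}$ and through the upper limit $t_*(\gamma_{\bar t,\bar x})$; controlling the $\bar t$-derivative of this integral is not a soft matter. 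The paper circumvents the direct differentiation entirely ({\sc Step~3} of the proof): it uses the exact identity $\tilde Q(t_1,\alpha)-\tilde Q(t_2,\alpha)=\int_{t_2}^{t_1}\mathrm{Tr}[\rho b](t,\alpha^+)\bar\theta(t)\,dt$ valid whenever $t_i = t_*(\gamma_{\bar t,x_i})$ for some $\bar t>t_i$ and $x_i\in\,]\alpha,x_\alpha(\bar t)[$, and then shows (a) for a.e.\ $\tau$ in $F=\{\mathrm{Tr}[\rho b](\cdot,\alpha^+)<0\}$ there is a rational $\bar t>\tau$ and an $x$ with $t_*(\gamma_{\bar t,x})=\tau$, and (b) such $\tau$ is a density-$1$ point of $E^-_{\bar t}\cap F$, by writing $F$ (up to a null set) as the countable union $\bigcup_{\bar t\in\mathbb{Q}\cap]0,T[}E^-_{\bar t}\cap F$. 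Without this density/covering argument (or an equivalent one), the boundary condition \eqref{e:dataibvp2} at $\alpha$ is not established, and your proof is incomplete at exactly the point you identify.

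Two smaller remarks. First, your claim that $t_*(\gamma_{\bar t,\alpha^+})=\bar t$ holds only at $\bar t$ where $Q(\cdot,\alpha)$ genuinely decreases; for a generic $\bar t$ (e.g.\ where $\rho$ vanishes in a strip near $\alpha$) this limit can be strictly less than $\bar t$, so even the starting point of the $\bar x\to\alpha^+$ argument needs the restriction to a.e.\ $t$ with $\mathrm{Tr}[\rho b](t,\alpha^+)<0$. Second, for the initial datum, the formula $\tilde Q(0,x)=\int_\alpha^x[\rho]_0\theta_0$ follows from taking a limit $\bar t\to 0^+$ (since $\tilde Q$ is defined on $]0,T[\times]\alpha,\beta[$ and extended by continuity); the paper's {\sc Step~2} does this via $\lim_{\varepsilon\to 0^+}(\tilde Q(\varepsilon,x_2)-\tilde Q(\varepsilon,x_1))$, which you should replicate rather than asserting $x_\alpha(0)=\alpha$, $x_\beta(0)=\beta$ directly.
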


\begin{proof} We organize the proof in four steps. \\
{\sc Step 1:} we establish~\eqref{e:suppin2}, namely we show that the function $\theta$ defined by~\eqref{E_def_theta} is a distributional solution of the equation on $]0, T[ \times ]\alpha, \beta[.$
By a standard argument involving partitions of unity, it suffices to show that, for every $(\bar t,\bar x) \in ]0,T[\times ]\alpha,\beta[$, the equation $\partial_t [\rho \theta] + \partial_x [b \rho \theta]=0$ holds in the sense of distributions in a neighborhood of $(\bar t,\bar x)$. To this end, fix $(\bar t,\bar x) \in ]0,T[\times ]\alpha,\beta[$ and $r>0$ such that the ball $B_r(\bar t,\bar x) \subseteq O(\bar t)$. Owing to Lemma \ref{L_tildeQ} we have $\tilde Q (t,x)= g_{\bar t} \circ Q(t,x)$ for every
$(t,x) \in  B_r(\bar t,\bar x)$. In particular, by the chain-rule for Lipschitz functions and definition~\eqref{E_def_theta} of $\theta$, we have
\begin{equation} \label{e:golfino}
\partial_x\tilde Q = \theta \partial_x Q= \rho \theta, \qquad  \mbox{and} \qquad \partial_t \tilde Q = \theta \partial_t Q= - \rho b \theta.
\end{equation}
and this yields $\partial_t [\rho \theta] + \partial_x [\rho b \theta] =0$ on $B_r(\bar t,\bar x)$. \\
{\sc Step 2:} we show that $[\rho \theta]_0=[\rho]_0 \theta_0$. First, we point out that~\eqref{e:golfino} means that $\tilde Q$ is a potential function for $(\rho \theta, b\rho \theta)$ and we recall~\eqref{E_trace_initial}. We conclude that establishing the equality 
 $[\rho \theta]_0=[\rho]_0 \theta_0$ amounts to show that $\partial_x \tilde Q(0,\cdot)=[\rho]_0 \theta_0$. To this end, we fix $x_1<x_2$ in $]\alpha,\beta[$, then  $x_1,x_2 \in ]x_\alpha(\varepsilon),x_\beta(\varepsilon)[$ provided $\ee>0$ is sufficiently small. This yields 
\begin{equation*}
\begin{split}
\tilde Q(0,x_2)-\tilde Q(0,x_1) = &~ \lim_{\varepsilon \to 0^+} \tilde Q(\varepsilon,x_2)-\tilde Q(\varepsilon,x_1)\\
= & \lim_{\varepsilon \to 0^+} \int_0^{\gamma_{\varepsilon,x_2}(0)} [\rho]_0 \theta_0(x)dx -  \int_0^{\gamma_{\varepsilon,x_1}(0)} [\rho]_0\theta_0(x)dx
= \int_{x_1}^{x_2}[\rho]_0 \theta_0(x)dx
\end{split}
\end{equation*}
and concludes {\sc Step 2.} \\
{\sc Step 3:} we show that  $\mathrm{Tr}[\rho b \theta] (t,\alpha^+) = \mathrm{Tr}[\rho b](t,\alpha^+)\bar \theta(t)$ for a.e. $t \in ]0,T[$ such that $\mathrm{Tr}[\rho b](t,\alpha^+)<0$.  We combine~\eqref{e:golfino} with~\eqref{E_trace_initial} and we conclude that 
$\mathrm{Tr}[\rho b \theta] (t,\alpha^+) = \partial_t \tilde Q(t,\alpha)$.
The computation in \eqref{E_x_12alpha} shows that, if $t_1,t_2$ are such that there are $\bar t> t_1,t_2$ and $x_1,x_2 \in ]\alpha,x_\alpha(\bar t)[$ satisfying
$t_1= t_*(\gamma_{\bar t,x_1})$ and $t_2= t_*(\gamma_{\bar t,x_2})$, then 
\begin{equation*}
\tilde Q(t_1,\alpha)-\tilde Q(t_2,\alpha)= \int_{t_2}^{t_1}\mathrm{Tr}[\rho b](t,\alpha^+)\bar \theta(t) dt.
\end{equation*} 
We set $F: = \{ t  \in ]0, T[: \; \mathrm{Tr}[\rho b](\tau,\alpha^+)<0 \}$ and we point out that, owing to the above equality, in order to complete the proof it suffices to show that i) for a.e. $ \tau \in F$  there are $\bar t>\tau$ and $x \in ]\alpha,x_\alpha(\bar t)[$ such that $t_*(\gamma_{\bar t,x})=\tau$ and ii)  the set $E^-_{\bar t} \cap F$
has density 1 at $\tau$.
We first prove that for a.e. $\tau \in F$ there is $\bar t \in \mathbb{Q} \cap ]0,T[$ as in i):
indeed for a.e. $\tau \in F$, the function $Q(\cdot, \alpha)$ is differentiable at $\tau$ and furthermore $\partial_t Q(\tau, \alpha) <0$.
In particular there is $\ee >0$ such that $Q(t,\alpha)< Q(\tau,\alpha)$ for a.e. $t \in ]\tau, \tau + \ee[$.
Let $\bar t \in \mathbb{Q}\cap ]\tau, \tau + \ee[$ and $x : = \lim_{\nu \to \alpha^+} \gamma_{\tau, \nu} (\bar t)$. We are left to show that $\tau = t_\ast (\gamma_{\bar t, x})$. By the monotonicity property stated in point (h) of Lemma~\ref{L_2} we have $ t_\ast (\gamma_{\bar t, x}) \ge \tau$. On the other hand, owing to property (f) in Lemma~\ref{L_2}, we have $Q (t, \gamma_{\bar t , x} (t))= Q(\tau, \alpha)$. Since $Q(t, \alpha) < Q(\tau, \alpha)$ for every $t \in ]\tau, \bar t]$, this implies that $ t_\ast (\gamma_{\bar t, x}) \leq \tau$ and concludes the proof of i).
Since a.e. $\tau \in F$ belongs to the countable union $\bigcup_{\bar t \in \mathbb{Q}\cap]0,T[}E^-_{\bar t} \cap F$, then a.e. $\tau \in F$ is a point of density 1 of $E^-_{\bar t}$ for at least a $\bar t \in \mathbb{Q}\cap]0,T[$ and this ends the proof of ii). \\
{\sc Step 4:} we establish \eqref{E_theta_characteristics}. We recall~\eqref{E_dergalpha}, \eqref{E_derg_0} and~\eqref{E_dergbeta}. We also fix $\bar t \in ]0, T[$ and recall the definition of the set $\mathcal R_0$ given in {\sc Step 1} of the proof of Lemma~\ref{L_tildeQ}. We term 
$\mathcal R_0^c : Q(\bar t, ]x_\alpha(\bar t), x_\beta (\bar t)[) \setminus \mathcal R_0$ and we point out that, owing to the coarea formula, 
$$
   \rho \Leb^1 
   \Big( ]x_\alpha(\bar t), x_\beta (\bar t)[ \cap Q(\bar t, \cdot)^{-1}
    (\mathcal R_0^c)  \Big)    =0.
$$
This implies that, for  $ \rho \Leb^1$-a.e. $x \in ]x_\alpha (\bar t), x_\beta (\bar t)[$, equation~\eqref{E_theta_characteristics} holds true at $t= \bar t$. By relying on analogous arguments, one can conclude the proof of~\eqref{E_theta_characteristics}. 
\end{proof}
%
\subsection{Proof of Theorem~\ref{t:genex}: uniqueness} \label{ss:lavanda}
In this paragraph we establish the uniqueness result in the statement of Theorem~\ref{t:genex}. 
To this end, we fix a solution $\theta$ of \eqref{IVPtheta2} and we recall the definition~\eqref{E_def_Q} of the potential function $Q_\theta$. 
The final goal is to establish Proposition \ref{P_formula}, which dictates that $Q_\theta=\tilde Q$, where $\tilde Q$ is the same as in \S\ref{ss:giallo}. Corollary~\ref{c:bigne} is then an easy consequence of Proposition \ref{P_formula} and gives the uniqueness result in Theorem~\ref{t:genex}. We first establish a preliminary result, which uses an argument in~\cite{Panov}.
\begin{lemma}\label{L_3}
	Let $\theta \in L^\infty (]0, T[ \times ]\alpha, \beta[)$ be a solution, in the sense of Definition~\ref{d:ibvp}, of the initial-boundary value problem~\eqref{IVPtheta2}. Fix $(\bar t,\bar x)\in ]0,T[\times ]\alpha,\beta[$, then there are $r>0$ and a Lipschitz continuous function $g_\theta: \R \to \R$ such that 
	\begin{equation}\label{E_g_theta}
	Q_\theta(t,x) = g_\theta(Q(t,x)) \qquad \forall (t,x) \in B_r(\bar t,\bar x).
	\end{equation}
	Also, 
	\begin{equation}\label{E_g_theta_curve}
	Q_\theta(t,\gamma_{\bar t,\bar x}(t))=g_\theta(Q(\bar t,\bar x)), \qquad \forall t \in [t_*(\gamma_{\bar t,\bar x}),t^*(\gamma_{\bar t,\bar x})]
	\end{equation}
	and
	\begin{equation}\label{E_trace_null}
	\mathrm{Tr}[(\rho\theta,\rho b \theta)](t,\gamma_{\bar t,\bar x}(t)^-)=\mathrm{Tr}[(\rho \theta,\rho b\theta)](t,\gamma_{\bar t,\bar x}(t)^+)=0,
      \quad \text{for a.e. $t \in [t_*(\gamma_{\bar t,\bar x}),t^*(\gamma_{\bar t,\bar x})]$.}
	\end{equation}
\end{lemma}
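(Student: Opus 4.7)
\textbf{Proof plan for Lemma~\ref{L_3}.} The strategy is to first obtain the local representation \eqref{E_g_theta} via a Panov-type Lipschitz comparison between the potentials $Q_\theta$ and $Q$, and then to deduce \eqref{E_g_theta_curve} and \eqref{E_trace_null} as straightforward consequences.

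\textbf{Step 1 (Panov-type Lipschitz comparison).} Set $M := \|\theta\|_{L^\infty}$. The central observation, following \cite{Panov}, is that the densities $\rho^\pm := \rho(M\pm\theta)$ are both nonnegative (since $|\theta|\leq M$) and, by linearity of the continuity equation and the assumption that $\theta$ solves \eqref{IVPtheta2}, both satisfy $\partial_t\rho^\pm + \partial_x[b\rho^\pm] = 0$ distributionally. Consequently the Lipschitz potentials $Q^\pm := MQ \pm Q_\theta$ obey $\partial_x Q^\pm = \rho^\pm \geq 0$, and in particular both are monotone non-decreasing in $x$. Following the argument of \cite{Panov}, one shows that there exists $r>0$ such that, for a.e.\ $(t_1,x_1),(t_2,x_2) \in B_r(\bar t,\bar x)$,
\begin{equation*}
|Q_\theta(t_1,x_1) - Q_\theta(t_2,x_2)| \leq M\,|Q(t_1,x_1) - Q(t_2,x_2)|.
\end{equation*}
Along horizontal segments ($t_1=t_2$, $x_1\leq x_2$) this is immediate: writing $A := Q^+(t,x_2) - Q^+(t,x_1) \geq 0$ and $B := Q^-(t,x_2) - Q^-(t,x_1) \geq 0$, the identities $Q_\theta = (Q^+ - Q^-)/2$ and $MQ = (Q^+ + Q^-)/2$ give $|A-B| \leq A+B$, which is the desired bound. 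The subtle part is the control along non-axis-aligned paths, which is achieved by exploiting the joint monotonicity of $Q^+$ and $Q^-$ together with the finite-speed-of-propagation assumption \eqref{E_ass} and the sign control of the normal traces of $(\rho^\pm, b\rho^\pm)$ along curves of slope $\pm\|b\|_{L^\infty}$ (as used in the proof of Lemma~\ref{L_2}).

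\textbf{Step 2 (Existence of $g_\theta$).} The Lipschitz estimate of Step~1, combined with a straightforward two-variable version of Lemma~\ref{l:pastiera} (whose proof is identical), yields a Lipschitz continuous function $g_\theta^0: Q(B_r(\bar t,\bar x))\to\R$ such that $Q_\theta(t,x) = g_\theta^0(Q(t,x))$ for every $(t,x) \in B_r(\bar t,\bar x)$. Extending $g_\theta^0$ to a Lipschitz $g_\theta:\R\to\R$ via the McShane formula yields \eqref{E_g_theta}.

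\textbf{Step 3 (Constancy along $\gamma_{\bar t,\bar x}$, equation \eqref{E_g_theta_curve}).} By property~{(f)} of Lemma~\ref{L_2}, $Q(t,\gamma_{\bar t,\bar x}(t)) = Q(\bar t,\bar x) =: \bar h$ for every $t\in[t_*(\gamma_{\bar t,\bar x}), t^*(\gamma_{\bar t,\bar x})]$. Combined with Step~2, this gives $Q_\theta(t,\gamma_{\bar t,\bar x}(t)) = g_\theta(\bar h)$ for every $t$ such that $(t,\gamma_{\bar t,\bar x}(t)) \in B_r(\bar t,\bar x)$. To extend the identity to the entire interval, fix $\varepsilon>0$ small and cover the compact curve segment $\{(t,\gamma_{\bar t,\bar x}(t)):t\in[t_*+\varepsilon,t^*-\varepsilon]\}\subset ]0,T[\times]\alpha,\beta[$ by finitely many balls, on each of which Steps~1--2 produce a local Lipschitz representation. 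Since each such local function, evaluated at $\bar h$, must agree with $Q_\theta$ at a curve point, and since the curve is connected (so adjacent balls overlap), these local values all coincide, giving $Q_\theta(t,\gamma_{\bar t,\bar x}(t)) = g_\theta(\bar h)$ on $[t_*+\varepsilon,t^*-\varepsilon]$. Letting $\varepsilon\to 0$ and invoking the continuity of $Q_\theta$ along the Lipschitz curve $\gamma_{\bar t,\bar x}$ proves \eqref{E_g_theta_curve}.

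\textbf{Step 4 (Vanishing of the traces, equation \eqref{E_trace_null}).} Since $t\mapsto Q_\theta(t,\gamma_{\bar t,\bar x}(t))$ is constant on $[t_*,t^*]$ by Step~3, its derivative vanishes identically. By Lemma~\ref{L_1}, this derivative equals $\pm\sqrt{1+\dot\gamma_{\bar t,\bar x}(t)^2}\,\mathrm{Tr}[(\rho\theta,\rho b\theta)](t,\gamma_{\bar t,\bar x}(t)^\pm)$ a.e., so both normal traces vanish a.e.\ on $[t_*,t^*]$.

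\textbf{Main obstacle.} The main technical difficulty lies in Step~1. While the pointwise bounds $|\rho\theta|\leq M\rho$ and $|\rho b\theta| \leq M|\rho b|$ hold trivially, they do not translate directly into a bound on differences of the potentials along diagonal paths: as soon as $b$ changes sign near $(\bar t,\bar x)$, a naive horizontal-then-vertical integration fails to produce the correct estimate because signed contributions from the time component need not have a definite sign. The Panov argument circumvents this by handling both auxiliary potentials $Q^\pm$ simultaneously, using their joint $x$-monotonicity together with the cone-of-dependence structure ensured by \eqref{E_ass} and Lemma~\ref{L_2}(a).
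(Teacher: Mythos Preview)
Your Steps 3 and 4 are correct and match the paper's argument. The genuine gap is in Step 1: the two-point Lipschitz bound $|Q_\theta(t_1,x_1)-Q_\theta(t_2,x_2)|\le M\,|Q(t_1,x_1)-Q(t_2,x_2)|$ for \emph{arbitrary} pairs in $B_r(\bar t,\bar x)$ is essentially equivalent to the conclusion $Q_\theta=g_\theta\circ Q$ you are trying to prove, and your sketched justification does not close it. The observations you list (joint $x$-monotonicity of $Q^\pm$, sign of the normal traces along lines of slope $\pm\|b\|_{L^\infty}$) give, for each $(t,x)$ in the trapezoid, the confinement $Q^\pm(\bar t-r,y_1)\le Q^\pm(t,x)\le Q^\pm(\bar t-r,y_2)$ with $y_1,y_2$ the feet of the two cone lines through $(t,x)$; but they do \emph{not} force $Q^+(t_2,x_2)-Q^+(t_1,x_1)$ and $Q^-(t_2,x_2)-Q^-(t_1,x_1)$ to have the same sign when the cone-feet intervals of the two points overlap, which is exactly what $|A-B|\le|A+B|$ requires. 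In particular, if $Q(t_1,x_1)=Q(t_2,x_2)$ with $t_1\neq t_2$, your argument would have to show directly that $Q_\theta(t_1,x_1)=Q_\theta(t_2,x_2)$, i.e.\ that $Q_\theta$ is constant along level sets of $Q$---but this is precisely \eqref{E_g_theta_curve}, which you only derive \emph{after} Step 1.

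The paper's Panov-type argument avoids this circularity by not attempting the full two-dimensional Lipschitz bound at all. It uses the horizontal estimate only on the single slice $t=\bar t-r$ (your easy case) to define $g_\theta$ on $Q(\bar t-r,I)$ via Lemma~\ref{l:pastiera}; it then checks that the image $Q(E_{\bar t,\bar x}(r))$ is contained in $Q(\bar t-r,I)$ (using exactly the cone-monotonicity you cite), so that $g_\theta\circ Q$ is defined on the whole trapezoid; finally it sets $H:=(Q_\theta-g_\theta\circ Q)^2$, verifies that $\rho H$ solves the continuity equation, and uses the divergence theorem on the trapezoid together with $H(\bar t-r,\cdot)=0$ and the sign of the lateral traces to get $\rho H\equiv 0$, whence $\partial_t H^2=0$ and $H\equiv 0$. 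In short: the missing idea is to propagate the one-slice identity by a PDE/energy argument rather than to establish a two-dimensional Lipschitz comparison directly.
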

\begin{proof}
	We fix $(\bar t,\bar x)\in ]0,T[\times ]\alpha,\beta[$ and $r>0$ such that
	$]\bar t - r, \bar t + r[\subseteq ]0,T[$ and $I:=]\bar x - r - 2r\|b\|_{L^\infty}, \bar x + r + 2r\|b\|_{L^\infty}[ \subseteq ]\alpha,\beta[$. The rest of the proof is organized 
   into some steps. \\
   {\sc Step 1:}  we define the function $g_\theta$ on the set $Q(\bar t-r, I)$. 
   We point out that 
   \begin{equation*}
   \begin{split}
       |Q_\theta(\bar t-r, x_1) - Q_\theta(\bar t-r, x_2) | &
       \stackrel{\eqref{E_def_Q}}{=}
      \left|  \int_{x_1}^{x_2} \rho \theta (\bar t-r, x) dx  \right|
      \leq \| \theta \|_{L^\infty} 
        \left|  \int_{x_1}^{x_2} \rho  (\bar t-r, x) dx  \right|\\
     &  =  \| \theta \|_{L^\infty} 
       |Q(\bar t-r, x_1) - Q (\bar t-r, x_2) |
   \end{split}
   \end{equation*}
   and owing to Lemma~\ref{l:pastiera} this implies that there is a Lipschitz continuous function $g_\theta$ satisfying~\eqref{E_g_theta} for every $(\bar t-r, x)$ with $x \in I$. \\
      {\sc Step 2:} we prove that~\eqref{E_g_theta} holds for every $(t,x)$ in the trapezoidal region
	\begin{equation}\label{E_def_trapezio}
	E_{\bar t,\bar x}(r):= \{ (t,x)\in ]\bar t-r,\bar t +r[ \times ]\alpha,\beta[: x \in ]\gamma_l(t),\gamma_r(t)[\},
	\end{equation}
	where 
	\begin{equation*}
	\gamma_l(t):= \bar x - r - 2r\|b\|_{L^\infty} + \|b\|_{L^\infty}(t-\bar t +r)\qquad \mbox{and} \qquad\gamma_r(t):= \bar x + r + 2r\|b\|_{L^\infty} - \|b\|_{L^\infty}(t-\bar t +r).
	\end{equation*}
	Since $B_r(\bar t,\bar x) \subseteq E_{\bar t,\bar x}(r) $ this completes the proof of \eqref{E_g_theta}. \\
    {\sc Step 2A:} we show that $g_\theta (h)$ is defined for every $h =Q(t, x)$ with 
    $(t, x) \in E_{\bar t,\bar x}(r)$. To this end, we recall the proof of point {(a)} of Lemma \ref{L_2} and conclude that, since $\dot\gamma_l(t)=\|b\|_{L^\infty}$ and 
	$\dot\gamma_r(t)=-\|b\|_{L^\infty}$, then 
	\begin{equation*}
	\frac{d}{dt}Q(t,\gamma_l(t))\ge 0 \qquad \mbox{and} \qquad \frac{d}{dt}Q(t,\gamma_r(t))\le 0.
	\end{equation*}
     Since for every $t \in ]0, T[$ the function $Q(t, \cdot)$ is monotone non-decreasing, the above inequalities imply that the image of $E_{\bar t,\bar x}(r)$ through $Q$ is contained in the set $Q(\bar t - r, I)$. \\
     {\sc Step 2B:} we consider any Lipschitz continuous extension of $g_\theta$ to 
	the whole $\R$ and set $H := (Q_\theta - g_\theta \circ Q)^2$. We want to show \begin{equation} \label{e:sacher}
	\rho H (t, x) = 0 \qquad \mbox{for }\mathcal L^1\mbox{-a.e. }x \in ]\gamma_l(t),\gamma_r(t)[.
	\end{equation}
      To this end, we first show that $H$ is a distributional solution of the equation 
      \begin{equation}\label{E_eq_tildeQ}
	\partial_t (\rho H) + \partial_x(\rho b H) = 0 \qquad
     \text{on $]0, T[ \times ]\alpha, \beta[$}.
	\end{equation} 
       Fix a test function $\varphi \in C^\infty_c(]0,T[\times ]\alpha,\beta[)$: since $H$ is a Lipschitz continuous function, then we can use $H \varphi$ as a test function for the equation  $\partial_t \rho  + \partial_x(\rho b) = 0$. We obtain 
       \inputencoding{latin1}
	\begin{equation*}
	\begin{split}
	0 = &~\iint_{]0,T[ \times ]\alpha,\beta[}\rho (H \varphi)_t + \rho b (H \varphi)_x dtdx 
	= \iint_{]0,T[ \times ]\alpha,\beta[} H \left( \rho \varphi_t + \rho b \varphi_x\right) + \varphi \left(\rho H_t + \rho b H_x\right)dt dx \\
	= &~ \iint_{]0,T[ \times ]\alpha,\beta[} H \left( \rho \varphi_t + \rho b \varphi_x\right) dt dx \\
	&~ + \iint_{]0,T[ \times ]\alpha,\beta[} 2\varphi (Q_\theta - g_\theta \circ Q) \left( \rho(Q_\theta)_t  +\rho b (Q_\theta)_x - g'_\theta \circ Q(\rho (Q)_t  +\rho b (Q)_x)  \right) dt dx \\
	= &~ \iint_{]0,T[ \times ]\alpha,\beta[} H \left( \rho \varphi_t + \rho b \varphi_x\right) dt dx,
	\end{split}
	\end{equation*}
      that is~\eqref{E_eq_tildeQ}. Note that in the last equality we used that $(Q_\theta)_t = - b  (Q_\theta)_x$ and $ Q_t = - b  Q_x$.

      We now fix $\tau \in ]\bar t-r, \bar t+r[$ and apply Lemma~\ref{l:p32acm} with $C= (\rho H, \rho b H)$, $\Lambda = \{ (t, x): \; t \in ]\bar t-r, \tau[, \; x  \in ]\gamma_l (t), \gamma_r (t)[\}$ and we choose a test function $\psi$ such that $\psi \equiv 1$ on $\Lambda$. Owing to~\eqref{E_eq_tildeQ} we arrive at 
	\begin{equation}\label{E_balance_tildeQ}
	\begin{split}
	\int_{\gamma_l(\tau)}^{\gamma_r(\tau)}\rho H (\tau,x) dx = &~ \int_{\gamma_l(\bar t - r)}^{\gamma_r(\bar t-r)}\rho H (\bar t - r,x) dx -	\int_{\gamma_l} \mathrm{Tr}[(\rho H, \rho b H)] d\mathcal H^1  -
	\int_{\gamma_r} \mathrm{Tr}[(\rho H, \rho b H)] d\mathcal H^1.
	\end{split}
	\end{equation}
	Since $\rho H \ge 0$, $\dot \gamma_l=-\|b\|_{L^\infty}$, and $\dot \gamma_r=\|b\|_{L^\infty}$, then 
	\begin{equation}\label{E_sign_traces}
	 \mathrm{Tr}[(\rho H, \rho b H)] \ge 0 \quad 
     \text{on $\gamma_l$} \qquad \mbox{and} \qquad \mathrm{Tr}[(\rho H, \rho b H)] 
      \ge 0 \quad 
     \text{on $\gamma_r$}.
	\end{equation}
     By plugging~ \eqref{E_sign_traces} and the equality 
	 $H(\bar t - r,x)=0$ for every $x \in ]\gamma_l(\bar t - r),\gamma_r(\bar t - r)[$    
     into  	\eqref{E_balance_tildeQ} we arrive at the equality $H(\tau, x) =0$ 
     for a.e. $x \in ]\gamma_l(\tau),\gamma_r(\tau)[$ and by the 
     arbitrariness of $\tau$ this  yields~\eqref{e:sacher}. \\
	{\sc Step 2C:} we show that  $H^2 \equiv 0$ in $E_{\bar t,\bar x}(r)$.
	Since  $\rho H=0$ a.e. in $E_{\bar t,\bar x}(r)$, then
	\begin{equation*}
	\partial_t H^2=2H \partial_t H= - 4\rho H (Q_\theta - g_\theta\circ Q) ( b \theta -  b g'_\theta \circ Q)=0 \quad \text{a.e. on $E_{\bar t,\bar x}(r)$}. 
	\end{equation*}	
   Since $H (t-\bar r, x)=0$ for every $x \in  ]\gamma_l(\bar t - r),\gamma_r(\bar t - r)[$,
	then $H^2 \equiv 0$ in $E_{\bar t,\bar x}(r)$, i.e. $Q_\theta = g_\theta \circ Q$ in $E_{\bar t,\bar x}(r)$.
	This completes the proof of \eqref{E_g_theta}. \\
{\sc Step 3:} we complete the proof of the lemma. The identity \eqref{E_g_theta_curve} follows from \eqref{E_g_theta} by a standard covering argument. To establish~\eqref{E_trace_null}, we point out that, owing to~\eqref{E_g_theta} and to point (f) in Lemma~\ref{L_2}, the function $\tilde Q$ is constant along $\gamma_{\bar t, \bar x}$. By relying on Lemma \ref{L_1} we then arrive at~\eqref{E_trace_null}. 
\end{proof}
We are now ready to state the main result of this paragraph.
\begin{proposition}\label{P_formula}
Let $\theta$ be a solution of \eqref{IVPtheta2} in the sense of Definition~\ref{d:ibvp}, $\tilde Q$ as in \S\ref{ss:giallo} and $Q_\theta$ as in~\eqref{E_def_Q}.
Then
\begin{equation}\label{E_Qtheta}
\tilde Q = Q_\theta.
\end{equation}
\end{proposition}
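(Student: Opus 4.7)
The plan is to show the pointwise equality $Q_\theta(\bar t, \bar x) = \tilde Q(\bar t, \bar x)$ for every $(\bar t, \bar x) \in ]0,T[ \times ]\alpha, \beta[$ by transporting $Q_\theta$ back along the characteristic curve $\gamma_{\bar t, \bar x}$ via Lemma~\ref{L_3} and matching the accumulated contribution with the piecewise formulas defining $\tilde Q$.

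As a preliminary, integrating \eqref{E_trace_initial} and using the last identity in \eqref{e:dataibvp2} together with the normalization $Q_\theta(0, \alpha) = 0$ from \eqref{E_def_Q} gives $Q_\theta(0, x) = \int_\alpha^x [\rho]_0 \theta_0 \, dy$ for every $x \in [\alpha, \beta]$. I then split into the three cases according to the position of $\bar x$ relative to $x_\alpha(\bar t)$ and $x_\beta(\bar t)$. If $x_\alpha(\bar t) < \bar x < x_\beta(\bar t)$, then $t_\ast(\gamma_{\bar t, \bar x}) = 0$ by \eqref{e:gelato}, so \eqref{E_g_theta_curve} gives $Q_\theta(\bar t, \bar x) = Q_\theta(0, \gamma_{\bar t, \bar x}(0)) = \int_\alpha^{\gamma_{\bar t, \bar x}(0)} [\rho]_0 \theta_0 \, dy$, which matches Case~1 of the definition of $\tilde Q$. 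In the remaining two cases, property~(d) of Lemma~\ref{L_2} yields $\gamma_{\bar t, \bar x}(t_\ast) \in \{\alpha, \beta\}$, so \eqref{E_g_theta_curve} combined with \eqref{E_trace_boundary} reduces the problem to identifying $\int_0^{t_\ast} \mathrm{Tr}[\rho b \theta](s, \alpha^+) \, ds$ (respectively the analogous integral at $\beta$) with the right-hand side of \eqref{E_formula2} (respectively \eqref{E_formula3}); by symmetry I focus on the case $\alpha < \bar x < x_\alpha(\bar t)$.

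The boundary integral is decomposed as $[0, t_\ast] = [0, t_{\min}] \cup (E^-_{\bar t} \cap (t_{\min}, t_\ast)) \cup ((t_{\min}, t_\ast) \setminus E^-_{\bar t})$. On $E^-_{\bar t}$, Lemma~\ref{L_envelope} combined with \eqref{E_trace_boundary} forces $\mathrm{Tr}[\rho b](\cdot, \alpha^+) \le 0$ a.e.\ (since $U^-$ is monotone non-increasing), so the boundary condition in \eqref{e:dataibvp2} yields $\mathrm{Tr}[\rho b \theta](s, \alpha^+) = \mathrm{Tr}[\rho b](s, \alpha^+) \bar\theta(s)$ a.e.\ on $E^-_{\bar t}$, where the pointwise trace estimate $|\mathrm{Tr}[\rho b \theta]| \le \|\theta\|_{L^\infty} |\mathrm{Tr}[\rho b]|$ handles the zero set of $\mathrm{Tr}[\rho b]$; this produces the boundary integral in \eqref{E_formula2}. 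For each maximal component $(a_i, b_i)$ of the complement of $E^-_{\bar t}$ in $(t_{\min}, t_\ast)$, $U^-$ is constant on $[a_i, b_i]$ with $Q(a_i, \alpha) = Q(b_i, \alpha)$; for $\varepsilon > 0$ small enough we have $[a_i, b_i] \subseteq [t_\ast(\gamma_{a_i, \alpha + \varepsilon}), t^\ast(\gamma_{a_i, \alpha + \varepsilon})]$, and the limit curve $\gamma^0_{a_i} := \lim_{\varepsilon \to 0^+} \gamma_{a_i, \alpha + \varepsilon}$ satisfies $Q(b_i, \gamma^0_{a_i}(b_i)) = Q(a_i, \alpha) = Q(b_i, \alpha)$ by property~(f) of Lemma~\ref{L_2}. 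Monotonicity of $Q(b_i, \cdot)$ combined with the identity $\partial_x Q = \rho$ forces $\rho(b_i, \cdot) = 0$ a.e.\ on $[\alpha, \gamma^0_{a_i}(b_i)]$, so using $\partial_x Q_\theta = \rho\theta$ together with the weak-$\ast$ time continuity of $\rho\theta$ (which makes the integrated identity valid at every fixed time) yields $Q_\theta(b_i, \gamma^0_{a_i}(b_i)) = Q_\theta(b_i, \alpha)$. Passing \eqref{E_g_theta_curve} to the limit $\varepsilon \to 0^+$ then gives $Q_\theta(a_i, \alpha) = Q_\theta(b_i, \gamma^0_{a_i}(b_i)) = Q_\theta(b_i, \alpha)$, that is $\int_{a_i}^{b_i} \mathrm{Tr}[\rho b \theta](s, \alpha^+) \, ds = 0$. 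Finally, as shown in Step~4 of the proof of Lemma~\ref{L_tildeQ}, $t_\ast(\gamma_{t_{\min}, \alpha + \varepsilon}) = 0$ for every $\varepsilon > 0$, hence passing \eqref{E_g_theta_curve} to the limit yields $Q_\theta(t_{\min}, \alpha) = Q_\theta(0, \bar y) = \int_\alpha^{\bar y} [\rho]_0 \theta_0 \, dy$, recovering the first term of \eqref{E_formula2}. The case $x_\beta(\bar t) < \bar x < \beta$ is completely symmetric with $E^+_{\bar t}$ in place of $E^-_{\bar t}$ and $\beta$ in place of $\alpha$.

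The main obstacle is the analysis of the gap intervals $(a_i, b_i)$: establishing $Q_\theta(a_i, \alpha) = Q_\theta(b_i, \alpha)$ requires the simultaneous use of (i) the characteristic transport identity \eqref{E_g_theta_curve} in its $\varepsilon \to 0^+$ limit form, (ii) the monotonicity and continuity of $Q$ together with the weak-$\ast$ continuity of $\rho$ in time to deduce that $\rho$ vanishes on the full interval $[\alpha, \gamma^0_{a_i}(b_i)]$ precisely at time $t = b_i$, and (iii) the absolute continuity of $Q_\theta(b_i, \cdot)$ to translate the vanishing of $\rho\theta$ into the equality between $Q_\theta(b_i, \alpha)$ and $Q_\theta(b_i, \gamma^0_{a_i}(b_i))$. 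The secondary technical point of controlling $\mathrm{Tr}[\rho b \theta]$ on the zero set of $\mathrm{Tr}[\rho b]$ is addressed by the pointwise trace estimate invoked above.
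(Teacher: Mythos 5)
Your proof is essentially correct, but it diverges from the paper's in one meaningful way, and one justification needs tightening.

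Where you diverge: to dispose of the gap intervals $(a_i,b_i)$ in $[t_{\min},t_\ast]\setminus E^-_{\bar t}$, you run a geometric argument for each component --- trace the limiting characteristics $\gamma^0_{a_i}$, show $Q(b_i,\cdot)$ is flat on $[\alpha,\gamma^0_{a_i}(b_i)]$, conclude $\rho(b_i,\cdot)$ vanishes there, pass that through $\partial_x Q_\theta=\rho\theta$, and finally push \eqref{E_g_theta_curve} to the limit to get $Q_\theta(a_i,\alpha)=Q_\theta(b_i,\alpha)$. The paper instead introduces the auxiliary function $H_\theta:=g_\alpha\circ U^-(Q(\cdot,\alpha),[0,t_\ast])$, which by Lemma~\ref{L_3} agrees with $Q_\theta(\cdot,\alpha)$ on $E^-$; since $dU^-/dt=0$ a.e.\ off $E^-$ (Lemma~\ref{L_envelope}) and $g_\alpha$ is Lipschitz, $dH_\theta/dt$ vanishes on the gaps \emph{automatically}, and one integrates $H_\theta$ (not $Q_\theta$) from $t_{\min}$ to $t_\ast$, using only that the two functions agree at the endpoints. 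Your argument proves the stronger fact that $Q_\theta(\cdot,\alpha)$ itself is constant on each closed gap, but at the cost of a delicate limit-of-characteristics construction for every component; the paper's composition trick sidesteps this entirely. Both are correct; yours is more concrete, the paper's is more economical.

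One gap to close: you invoke ``the pointwise trace estimate $|\mathrm{Tr}[\rho b \theta]|\le\|\theta\|_{L^\infty}|\mathrm{Tr}[\rho b]|$'' to handle the zero set of $\mathrm{Tr}[\rho b]$ inside $E^-_{\bar t}$. If you mean Lemma~\ref{L_estimate_traces}, that is circular: its proof (Case~2) uses $Q_\theta=\tilde Q$, which is precisely the statement you are proving. The fix is the one the paper uses implicitly: on $E^-$, $Q_\theta(\cdot,\alpha)$ agrees with a Lipschitz function of $Q(\cdot,\alpha)$ (again by Lemma~\ref{L_3}, with Lipschitz constant $\|\theta\|_{L^\infty}$), so $|\partial_t Q_\theta(\cdot,\alpha)|\le\|\theta\|_{L^\infty}|\partial_t Q(\cdot,\alpha)|$ a.e.\ on $E^-$; combined with \eqref{E_trace_boundary} this gives the needed estimate on $E^-$ without touching Lemma~\ref{L_estimate_traces}. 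With that substitution your proof is complete.
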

\begin{proof}
We fix $\bar t\in ]0,T[$ and we distinguish between three cases. \\
{\sc Case 1:} $\bar x \in ]x_\alpha(\bar t),x_\beta(\bar t)[$. Owing to~\eqref{E_g_theta_curve}, $Q_\theta(\bar t,\bar x)= Q_\theta(0,\gamma_{\bar t,\bar x}(0))$. Since $Q_\theta(0,\alpha)=0$ and
$\partial_xQ_\theta= \rho\theta$, then 
\begin{equation*}
Q_\theta(\bar t,\bar x)= Q_\theta(\gamma_{\bar t,\bar x}(0))= \int_0^{\gamma_{\bar t,\bar x}(0)}[\rho\theta]_0 (x)dx
\stackrel{\eqref{e:dataibvp2}}{=}  \int_0^{\gamma_{\bar t,\bar x}(0)}[\rho]_0\theta_0 (x)dx 
\end{equation*}
and this yields~\eqref{E_Qtheta}. \\
{\sc Case 2:} $\bar x \in ]\alpha, x_\alpha(\bar t)[$. Owing to~\eqref{E_g_theta_curve},
$
Q_\theta(\bar t,\bar x)= Q_\theta(t_*(\gamma_{\bar t,\bar x}),\alpha). 
$ We consider the curve $\gamma_{t_{\min},\alpha +\varepsilon}$, where $t_{\min}$ is the same as in~\eqref{e:limone}: by combining Lemma~\ref{l:strudel} and the minimality property of $t_{\min}$ we conclude that $\gamma_{t_{\min},\alpha +\varepsilon}(t) > \alpha$ for every $t \in [0, t_{\min}]$. Since $\gamma_{t_{\min},\alpha +\varepsilon}$ is a Lipschitz continuous curve,  then owing to~\eqref{E_ass} $\gamma_{t_{\min},\alpha +\varepsilon}(t) < \beta $ for every $t \in [0, t_{\min}]$ and $\varepsilon>0$ sufficiently small. Summing up,  $\gamma_{t_{\min},\alpha +\varepsilon}(t) \in ]\alpha,  \beta[ $ for every $t \in [0, t_{\min}]$.  Owing to~\eqref{E_g_theta_curve}, 
$
Q_\theta(t_{\min},\alpha)=Q_\theta(0,\bar y). $
Since 
\begin{equation*}
Q_\theta(0,\bar y)= \int_0^{\bar y} [\rho]_0 (x)\theta_0(x)dx,
\end{equation*}
by recalling~\eqref{e:mirtillo} we conclude that to prove \eqref{E_Qtheta} it suffices to show that 
\begin{equation}\label{E_claim_case2}
 Q_\theta(t_*(\gamma_{\bar t,\bar x}),\alpha) - Q_\theta(t_{\min},\alpha)= \int_{E^-_{t_*(\gamma_{\bar t,\bar x})}}  \mathrm{Tr}[\rho b](t,\alpha^+) \bar \theta(t)dt.
\end{equation}
%
Owing to Lemma~\ref{L_3} there is a Lipschitz continuous function $g_\alpha:\R \to \R$ such that 
$
H_\theta:= g_\alpha \circ U^-(Q(\cdot,\alpha),[0,t_*(\gamma_{\bar t,\bar x})])
$
coincides with $Q_\theta(\cdot,\alpha)$ on $E^-_{t_*(\gamma_{\bar t,\bar x})}$.
Since $Q = U^-(Q(\cdot,\alpha),[0,t_*(\gamma_{\bar t,\bar x})])$ on $E^-_{t_*(\gamma_{\bar t,\bar x})}$,
then 
\begin{equation*}
\mathrm{Tr}[\rho b](t,\alpha^+)\stackrel{\eqref{E_trace_boundary}}{=} \partial_t Q(t,\alpha) \stackrel{\eqref{E_derivatives_envelope}}{=} \frac{d}{dt} \left[U^-(Q(\cdot,\alpha),[0,t_*(\gamma_{\bar t,\bar x})])\right](t) \le 0 
\quad \text{a.e. on $E^-_{t_*(\gamma_{\bar t,\bar x})}$}.
\end{equation*}
Since $g_\alpha$ is a Lipschitz continuous function, then 
\be \label{e:mora}
     \frac{d }{dt}H_\theta(t)=0 \quad \text{for a.e. $t \in E^-_{t_*(\gamma_{\bar t,\bar x})}$ such that
 $\mathrm{Tr}[\rho b](t,\alpha^+)= 0$.}
\eq
 Since $H_\theta$ coincides with $Q_\theta(\cdot,\alpha)$ on $E^-_{t_*(\gamma_{\bar t,\bar x})}$, then 
 \begin{equation}\label{E_trace_H}
\mathrm{Tr}[\rho b \theta](\cdot ,\alpha^+) = \partial_tQ_\theta(\cdot,\alpha)= \frac{d}{dt}H_\theta
\quad \text{a.e. on $\in E^-_{t_*(\gamma_{\bar t,\bar x})}$}.
 \end{equation}
 Owing to~\eqref{e:dataibvp2}, $\mathrm{Tr}[\rho b \theta](t,\alpha^+) = \mathrm{Tr}[\rho b](t,\alpha^+) \bar \theta(t)$ for  a.e. $t\in ]0,T[$ such that
 $\mathrm{Tr}[\rho b](t,\alpha^+)< 0$ and hence by combining \eqref{E_trace_H} and~\eqref{e:mora} we get 
 \begin{equation}\label{E_derH}
 \frac{d}{dt}H_\theta(t) = \mathrm{Tr}[\rho b](t,\alpha^+) \bar \theta(t) \qquad \text{a.e on $ E^-_{t_*(\gamma_{\bar t,\bar x})}$}.
 \end{equation}
 Moreover, since the derivative of $U^-(Q(\cdot,\alpha),[0,t_*(\gamma_{\bar t,\bar x})])$ vanishes a.e. on $[0,t_*(\gamma_{\bar t,\bar x})] \setminus E^-_{t_*(\gamma_{\bar t,\bar x})}$, then the same holds for $H_\theta$.
 We finally observe that by recalling Lemma~\ref{l:strudel} and~\eqref{e:mirtillo} we have $t_{\min}, t_*(\gamma_{\bar t,\bar x})\in  E^-_{t_*(\gamma_{\bar t,\bar x})}$ and therefore 
 \begin{equation*}
 \begin{split}
  Q_\theta(t_*(\gamma_{\bar t,\bar x}),\alpha) - Q_\theta(t_{\min},\alpha)= &~  H_\theta(t_*(\gamma_{\bar t,\bar x})) - H_\theta(t_{\min}) = \int_{t_{\min}}^{t_*(\gamma_{\bar t,\bar x})} \frac{d}{dt} H_\theta(t) dt \\
 =&~  \int_{E^-_{t_*(\gamma_{\bar t,\bar x})}} \frac{d}{dt} H_\theta(t) dt \stackrel{\eqref{E_derH}}{=} \int_{E^-_{t_*(\gamma_{\bar t,\bar x})}}  \mathrm{Tr}[\rho b](t,\alpha^+) \bar \theta(t) dt.
 \end{split}
 \end{equation*} 
 This establishes \eqref{E_claim_case2} and therefore concludes the analysis of {\sc Case 2.} \\
{\sc Case 3:} $\bar x \in ]x_\beta(\bar t),\beta[$. The argument is analogous to the one in {\sc Case 2} and is therefore omitted.
\end{proof}
\begin{corol}
\label{c:bigne} Under the same assumptions as in the statement of Theorem~\ref{t:genex}, assume that $\theta_1 \in L^\infty(]0, T[ \times ]\alpha, \beta[)$ and $\theta_2 \in L^\infty(]0, T[ \times ]\alpha, \beta[)$ are two solutions of~\eqref{IVPtheta2}, in the sense of Definition~\ref{d:ibvp}. Then $\rho \theta_1= \rho \theta_2$ a.e. on $]0, T[ \times ]\alpha, \beta[$. 
\end{corol}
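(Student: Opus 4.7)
The plan is to deduce Corollary~\ref{c:bigne} directly from Proposition~\ref{P_formula}, which is the heart of the uniqueness argument; the corollary itself is essentially a one-line consequence once that proposition is in hand.

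First, I would apply Proposition~\ref{P_formula} to each of the two solutions $\theta_1$ and $\theta_2$ separately. For $i=1,2$, let $Q_{\theta_i}$ be the Lipschitz potential associated to $\theta_i$ via~\eqref{E_def_Q}, i.e.
\begin{equation*}
\partial_x Q_{\theta_i} = \rho \theta_i, \qquad \partial_t Q_{\theta_i} = -\rho b \theta_i, \qquad Q_{\theta_i}(0,\alpha)=0.
\end{equation*}
The crucial point is that the function $\tilde Q$ constructed in \S\ref{ss:giallo} depends only on the data $b, \rho, \theta_0, \bar\theta, \underline\theta$ and not on the particular solution, so Proposition~\ref{P_formula} gives $Q_{\theta_1} = \tilde Q = Q_{\theta_2}$ on the whole of $[0,T]\times[\alpha,\beta]$.

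Next, I would take distributional $x$-derivatives of both sides of the identity $Q_{\theta_1} = Q_{\theta_2}$. Since both potentials are Lipschitz continuous, their weak $x$-derivatives agree a.e. on $]0,T[\times]\alpha,\beta[$, and by~\eqref{E_def_Q} these derivatives are $\rho\theta_1$ and $\rho\theta_2$, respectively. This yields $\rho\theta_1 = \rho\theta_2$ a.e., which is the desired conclusion.

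There is no real obstacle at this stage: all the heavy lifting has been performed in Lemma~\ref{L_3} and Proposition~\ref{P_formula}. The only minor point to be careful about is that the assumption \eqref{E_ass} on the size of $T$ was imposed for convenience throughout \S\ref{s:main}; to lift it one repeats the argument on successive time slabs $[k\bar T,(k+1)\bar T]$, using the equality $\rho\theta_1=\rho\theta_2$ obtained on the previous slab as the initial datum (in the sense of~\eqref{E_trace_initial}) for the next one. This is the standard bootstrap already anticipated at the end of \S\ref{ss:roadmap}, so the full statement of Corollary~\ref{c:bigne} follows immediately.
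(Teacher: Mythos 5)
Your argument is identical to the paper's: apply Proposition~\ref{P_formula} to each solution to get $Q_{\theta_1}=\tilde Q=Q_{\theta_2}$, then differentiate in $x$ using~\eqref{E_def_Q} to conclude $\rho\theta_1=\rho\theta_2$ a.e. The extra remark about lifting the restriction~\eqref{E_ass} by iterating over time slabs is consistent with what the paper announces in~\S\ref{ss:roadmap} and is a harmless (indeed welcome) addition.
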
 
\begin{proof}
Let $\tilde Q$ be defined in \S\ref{ss:giallo}. Then, by Proposition \ref{P_formula}, we have
\begin{equation*}
Q_{\theta_1} = \tilde Q = Q_{\theta_2},
\end{equation*}
where $Q_{\theta_1}$ and $Q_{\theta_2}$ are the Lipschitz potentials associated to $\theta_1$ and $\theta_2$ respectively as in \eqref{E_def_Q}.
In particular 
\begin{equation*}
\rho \theta_1 = \partial_x Q_{\theta_1} = \partial_x Q_{\theta_2} = \rho \theta_2 \qquad \mbox{a.e. in }]0,T[\times ]\alpha,\beta[. \qedhere
 \end{equation*}
\end{proof}
\section{Proofs of Corollary~\ref{c:comparison} and of the trace renormalization property} \label{s:main2}
In this section we apply the analysis at the previous section and establish Corollary~\ref{c:comparison} and the trace renormalization property given by Theorem~\ref{t:fumetti}
\subsection{Proof of Corollary~\ref{c:comparison}}
By the uniqueness result in the statement of Theorem~\ref{t:genex}, every $\theta \in L^\infty (]0,T[ \times ]\alpha, \beta[)$ which solves~\eqref{IVPtheta2} in the sense of Definition~\ref{d:ibvp} coincides $\rho \Leb^2$-a.e. with the function defined by 
~\eqref{E_theta_characteristics}. This yields the comparison principle. 
\subsection{Trace renormalization property}\label{Ss_traces}
We first establish a preliminary result. 
\begin{lemma}\label{L_estimate_traces} Assume that $b \in L^\infty (]0, T[ \times ]\alpha, \beta[)$ is a nearly incompressible vector field with density $\rho$, and that $\theta\in L^\infty(]0,T[\times ]\alpha,\beta[)$ 
is a solution of~\eqref{IVPtheta2}, in the sense of Definition~\ref{d:ibvp}. Then 
\begin{equation}\label{E_estimate_traces}
\left.
\begin{array}{ll}
|\mathrm{Tr}[b\rho \theta](t,\alpha^+)|\le & \|\theta\|_{L^\infty}|\mathrm{Tr}[b\rho](t,\alpha^+)|, \\
|\mathrm{Tr}[b\rho \theta](t,\beta^-)|\le &\|\theta\|_{L^\infty}|\mathrm{Tr}[b\rho](t,\beta^-)|
\end{array}
\right.
\qquad \text{for a.e. $t \in ]0,T[$.}
\end{equation}
\end{lemma}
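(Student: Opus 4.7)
The strategy is to introduce the auxiliary densities $\mu_\pm := \rho(\|\theta\|_{L^\infty} \pm \theta)$. Since $|\theta| \leq \|\theta\|_{L^\infty}$ a.e.\ and $\rho \geq 0$, both $\mu_\pm$ are nonnegative $L^\infty$ functions, and being linear combinations of $\rho$ and $\rho\theta$ they are distributional solutions of the continuity equation $\partial_t \mu_\pm + \partial_x(b\mu_\pm) = 0$. Applying Remark~\ref{r:carta} to each, the normal traces $\mathrm{Tr}[b\mu_\pm](\cdot, \alpha^+)$ exist, and the trace construction in \S\ref{s:distform} is linear, so
\begin{equation*}
\mathrm{Tr}[b\mu_\pm](\cdot, \alpha^+) = \|\theta\|_{L^\infty}\, \mathrm{Tr}[b\rho](\cdot, \alpha^+) \pm \mathrm{Tr}[b\rho\theta](\cdot, \alpha^+).
\end{equation*}
The core of the plan is to establish the \emph{sign consistency} that $\mathrm{Tr}[b\mu_+](t,\alpha^+)$ and $\mathrm{Tr}[b\mu_-](t,\alpha^+)$ share the sign of $\mathrm{Tr}[b\rho](t,\alpha^+)$ for a.e.\ $t \in ]0, T[$. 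Once this is granted, the elementary inequality $|a - c| \leq |a + c|$ for $a,c$ of the same sign, applied to $a = \mathrm{Tr}[b\mu_+](t,\alpha^+)$ and $c = \mathrm{Tr}[b\mu_-](t,\alpha^+)$, yields $2|\mathrm{Tr}[b\rho\theta](t,\alpha^+)| \leq 2\|\theta\|_{L^\infty}|\mathrm{Tr}[b\rho](t,\alpha^+)|$, which is the bound at $\alpha^+$ in~\eqref{E_estimate_traces}; the bound at $\beta^-$ is obtained symmetrically.

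To prove the sign consistency, I would split $]0, T[$ into the inflow set $F:=\{t: \mathrm{Tr}[b\rho](t,\alpha^+) < 0\}$ and its complement. On $F$ the boundary condition in Definition~\ref{d:ibvp} gives $\mathrm{Tr}[b\rho\theta](t,\alpha^+) = \bar\theta(t)\mathrm{Tr}[b\rho](t,\alpha^+)$, so that $\mathrm{Tr}[b\mu_\pm](t,\alpha^+) = (\|\theta\|_{L^\infty} \pm \bar\theta(t))\mathrm{Tr}[b\rho](t,\alpha^+)$. It therefore suffices to show that $|\bar\theta(t)| \leq \|\theta\|_{L^\infty}$ for a.e.\ $t \in F$. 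This follows from the representation~\eqref{E_theta_characteristics} in Proposition~\ref{P_existence}, by arguing as in {\sc Step 3} of its proof that for a.e.\ such $t$ one can choose $\bar t > t$ arbitrarily close to $t$ and $x \in \,]\alpha, x_\alpha(\bar t)[$ with $t_\ast(\gamma_{\bar t,x}) = t$, whence $\theta(\bar t, x) = \bar\theta(t)$ and $|\bar\theta(t)| \leq \|\theta\|_{L^\infty}$; both prefactors $\|\theta\|_{L^\infty} \pm \bar\theta(t)$ are then nonnegative, giving sign consistency on~$F$.

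The outflow complement $]0,T[\setminus F$, where no boundary condition is imposed, is the main technical obstacle. The plan here is to invoke Proposition~\ref{P_formula}, identifying $Q_\theta = \tilde Q$, so that by Lemma~\ref{l:crema} we have $\mathrm{Tr}[b\rho\theta](\cdot,\alpha^+) = \partial_t\tilde Q(\cdot,\alpha)$ and $\mathrm{Tr}[b\rho](\cdot,\alpha^+) = \partial_t Q(\cdot,\alpha)$. The goal is to show that, locally on the outflow set, $\tilde Q(\cdot, \alpha) = g_\alpha(Q(\cdot, \alpha))$ for a Lipschitz function $g_\alpha$ with $\|g_\alpha'\|_{L^\infty} \leq \|\theta\|_{L^\infty}$; the chain rule will then yield $|\partial_t\tilde Q(t,\alpha)| \leq \|\theta\|_{L^\infty}|\partial_t Q(t,\alpha)|$, which is the sign consistency (and in fact the target estimate) on this set. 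By Lemma~\ref{l:pastiera}, the existence of such $g_\alpha$ reduces to the Lipschitz bound
\begin{equation*}
|\tilde Q(t_2, \alpha) - \tilde Q(t_1, \alpha)| \leq \|\theta\|_{L^\infty} |Q(t_2, \alpha) - Q(t_1, \alpha)|
\end{equation*}
for $t_1 < t_2$ in a suitable outflow interval. This is the heart of the matter: it requires a careful analysis of how $\tilde Q(\bar t, \alpha)$, given via Case~2 of the formula~\eqref{E_formula2} evaluated as $\bar x \to \alpha^+$, depends on $\bar t$ through both the envelope set $E^-_{\bar t}$ and the auxiliary point $\bar y(\bar t)$ from~\eqref{e:limone}. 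The envelope stability property of Lemma~\ref{l:cialda} and the monotonicity of the characteristic curves (Lemma~\ref{L_2}(h)) should enable one to decompose the variation of $\tilde Q(\cdot, \alpha)$ into boundary and initial-datum contributions, each bounded by $\|\theta\|_{L^\infty}$ times the corresponding variation of $Q(\cdot, \alpha)$, in analogy with the spatial computation~\eqref{E_x_12alpha} in the proof of Lemma~\ref{L_tildeQ}. The argument at $\beta^-$ is symmetric, relying on Case~3 of the $\tilde Q$ construction and the datum $\underline\theta$.
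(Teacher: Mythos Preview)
Your introduction of $\mu_\pm = \rho(\|\theta\|_{L^\infty} \pm \theta)$ is only a reformulation: the sign consistency you aim for is \emph{equivalent} to~\eqref{E_estimate_traces} itself, so the substance of your proposal lies entirely in the inflow/outflow analysis that follows, and both parts are incomplete.

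On the inflow set $F$ you need $|\bar\theta(t)| \le \|\theta\|_{L^\infty}$ for a.e.\ $t\in F$. Your argument invokes~\eqref{E_theta_characteristics} at a single point $(\bar t, x)$, but that identity holds only $\rho\Leb^2$-a.e.\ and describes the \emph{constructed} solution of Proposition~\ref{P_existence}; the given $\theta$ in the lemma coincides with it only on $\{\rho>0\}$, and may have a strictly smaller $L^\infty$ norm. You cannot read off $\theta(\bar t, x) = \bar\theta(t)$ at one point without first showing that the fibre $\{(\bar t,x): t_\ast(\gamma_{\bar t,x}) = t\}$ meets $\{\rho>0\}$ in positive measure. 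On the outflow complement you correctly identify the key step as the Lipschitz estimate $|\tilde Q(t_2,\alpha)-\tilde Q(t_1,\alpha)| \le \|\theta\|_{L^\infty}|Q(t_2,\alpha)-Q(t_1,\alpha)|$, but you only sketch how~\eqref{E_formula2}, Lemma~\ref{l:cialda}, and Lemma~\ref{L_2}(h) should combine to give it; the dependence of both $E^-_{\bar t}$ and $\bar y$ on $\bar t$ makes this genuinely delicate, and it is not carried out.

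The paper's proof is quite different and more direct. It avoids the inflow/outflow split and~\eqref{E_theta_characteristics} altogether. For each $\bar t$ it takes the weak characteristics $\gamma_n := \gamma_{\bar t,\alpha+\delta_n}$ and applies the divergence theorem to both $(\rho,b\rho)$ and $(\rho\theta,b\rho\theta)$ over the thin region between $x=\alpha$ and $\gamma_n$ on a short time interval $[\bar t-\varepsilon_n,\bar t+\varepsilon_n]$. The normal traces along $\gamma_n$ vanish (Lemma~\ref{L_2}(g) and~\eqref{E_trace_null}), and since $\gamma_n$ touches $\alpha$ at one of the two endpoints $\bar t\pm\varepsilon_n$, exactly one of the two spatial integrals survives on each side; the elementary bound $|\rho\theta| \le \|\theta\|_{L^\infty}\rho$ on that integral then gives an averaged version of~\eqref{E_estimate_traces}, and a Lebesgue-point argument concludes.
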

\begin{proof}
We only establish the first inequality in \eqref{E_estimate_traces}, the proof of the second one is analogous. What we actually show is that for every $\bar t \in ]0,T[$ there is a sequence $\varepsilon_n\to 0$ such that  
\begin{equation}\label{E_claim}
\left|  \int_{\bar t-\varepsilon_n}^{\bar t + \varepsilon_n} \mathrm{Tr}[(\rho \theta, \rho b \theta)](t,\alpha^+) dt  \right| \le \|\theta \|_{ L^\infty} 
 \int_{\bar t-\varepsilon_n}^{\bar t + \varepsilon_n} \big|\mathrm{Tr}[(\rho , \rho b )](t,\alpha^+)\big| dt \quad \text{for every $n\in \mathbb{N}$}.
\end{equation}
Then the first inequality in \eqref{E_estimate_traces} follows by the Lebesgue Differentiation Theorem.
Fix $\bar t \in ]0,T[$, let $\delta_n \to 0^+$ be a monotone decreasing sequence and consider the curve $\gamma_n:=\gamma_{\bar t, \alpha + \delta_n}$. We distinguish between two cases:
\begin{equation}\label{E_two_cases}
\begin{split}
&\liminf_{n\to \infty} \min \{t^*(\gamma_n) - \bar t, \bar t - t_*(\gamma_n)\} = 0, \\
&\liminf_{n\to \infty} \min \{t^*(\gamma_n) - \bar t, \bar t - t_*(\gamma_n)\} > 0.
\end{split}
\end{equation}
By possibly passing to suitable subsequences, we can assume that the two inferior limits in \eqref{E_two_cases} are actually limits. \\
{\sc Case 1:} the first condition in \eqref{E_two_cases} holds true. We set $\varepsilon_n:=  \min \{t^*(\gamma_n) - \bar t, \bar t - t_*(\gamma_n)\}$
and we apply Lemma~\ref{l:p32acm} with $C=(\rho,\rho b)$ and $C=(\rho \theta, \rho b \theta)$: note that in both cases $ \mathrm{Div} C=0$.  We apply~\eqref{e:acmnt} with $\Lambda: = E_n(t)$, where $E_n(t)$ is the region defined by setting 
\begin{equation*}
E_n(t):=\{ (t,x)\in ]\bar t-\varepsilon_n, \bar t + \varepsilon_n[ \times ]\alpha,\beta[ : x < \gamma_n(t)  \},
\end{equation*}
and with a test function $\psi$ such that $\psi \equiv 1$ on $E_n(t)$. 
Since $Q$ and $Q_\theta$ are constant along $\gamma_n$, then owing to Lemma~\ref{L_1} the normal traces of both $(\rho, \rho b)$ and $(\rho \theta, \rho b \theta)$ along $\gamma_n$ vanish. This yields 
\begin{equation}\label{E_rho}
\int_{\bar t- \varepsilon_n}^{\bar t + \varepsilon_n} \mathrm{Tr}[b\rho](t,\alpha^+)dt+\int_\alpha^{\gamma_n(\bar t + \varepsilon_n)} \rho(\bar t + \varepsilon_n,x)dx - 
\int_\alpha^{\gamma_n(\bar t - \varepsilon_n)} \rho(\bar t - \varepsilon_n,x)dx=0,
\end{equation}
and
\begin{equation}\label{E_rho_theta}
\int_{\bar t- \varepsilon_n}^{\bar t + \varepsilon_n} \mathrm{Tr}[b\rho\theta](t,\alpha^+)dt+\int_\alpha^{\gamma_n(\bar t + \varepsilon_n)} \rho\theta(\bar t + \varepsilon_n,x)dx - 
\int_\alpha^{\gamma_n(\bar t - \varepsilon_n)} \rho\theta(\bar t - \varepsilon_n,x)dx=0.
\end{equation}
Owing to property (e) in Lemma \ref{L_2}, either 
$\gamma_n(\bar t + \varepsilon_n)\in \{\alpha,\beta\}$ or $\gamma_n(\bar t - \varepsilon_n)\in \{\alpha,\beta\}$ (or both). Since $\gamma_n$ are $\|b\|_{L^\infty}$-Lipschitz continuous curves and $\varepsilon_n$ is converging to $0$ by assumption, for $n$ sufficiently large neither $\gamma_n(\bar t + \varepsilon_n)=\beta$ nor $\gamma_n(\bar t - \varepsilon_n) =\beta$ can be satisfied and hence either $\gamma_n(\bar t + \varepsilon_n)=\alpha$ or $\gamma_n(\bar t - \varepsilon_n) =\alpha$ (or both).
In particular at least one among the second and the third integral vanishes in both \eqref{E_rho} and \eqref{E_rho_theta}.  Owing to the inequality $\rho \ge 0$ we then have  
\begin{equation}\label{E_control}
\begin{split}
\left| \int_{\alpha}^{\gamma_n(\bar t + \varepsilon_n)} \rho\theta(\bar t + \varepsilon_n,x)dx -\right. &
\left. \int_{\alpha}^{\gamma_n(\bar t - \varepsilon_n)} \rho\theta(\bar t - \varepsilon_n,x)dx  \right|  \\
& \le ~
\|\theta\|_{L^\infty} \left| \int_{\alpha}^{\gamma_n(\bar t + \varepsilon_n)} \rho(\bar t + \varepsilon_n,x)dx -
\int_{\alpha}^{\gamma_n(\bar t - \varepsilon_n)} \rho(\bar t - \varepsilon_n,x)dx \right|.
\end{split}
\end{equation}
By combining \eqref{E_rho}, \eqref{E_rho_theta} and \eqref{E_control} we get
\begin{equation*}
\left| \int_{\bar t- \varepsilon_n}^{\bar t + \varepsilon_n} \mathrm{Tr}[b\rho\theta](t,\alpha^+)dt \right| \le \|\theta\|_{L^\infty} \left| \int_{\bar t- \varepsilon_n}^{\bar t + \varepsilon_n} \mathrm{Tr}[b\rho](t,\alpha^+)dt \right|,
\end{equation*}
which yields~\eqref{E_claim}. \\
{\sc Case 2:} we assume that $\liminf_{n\to \infty} \min \{t^*(\gamma_n) - \bar t, \bar t - t_*(\gamma_n)\} > 0$.
We denote by
\begin{equation}\label{E_def_bareps}
\bar \varepsilon:=\min \left\{ \frac{\beta-\alpha}{2\|b\|_{L^\infty}}, \liminf_{n\to \infty} \min \{t^*(\gamma_n) - \bar t, \bar t - t_*(\gamma_n)\} \right\}
\end{equation}
and assume without loss of generality that $\delta_n < (\beta-\alpha)/2$ for every $n\in \mathbb{N}$. Owing to the definition of $\bar \varepsilon$ in
\eqref{E_def_bareps} and to the fact that all the $\gamma_n$ are $\|b\|_{L^\infty}$-Lipschitz continuous curves, we conclude that $\gamma_n(t) \in ]\alpha,\beta[$ for every $t \in ]\bar t - \bar \varepsilon,\bar t + \bar \varepsilon[$. Owing to the Dini Theorem, $\gamma_n$ converges uniformly on $[\bar t - \bar \varepsilon,\bar t + \bar \varepsilon]$ to some Lipschitz continuous curve 
$\bar \gamma$, which satisfies $\bar \gamma(\bar t)=\alpha$.
We claim that for every $n\in \mathbb{N}$ there are $ h_n,  h_{\theta,n} \in \R$ such that 
\begin{equation}\label{E_h_n}
Q(t,\gamma_n(t))= h_n \qquad \mbox{and} \qquad Q_\theta(t,\gamma_n(t))= h_{\theta, n}, 
\quad 
\text{a.e. $t \in ]\bar t - \bar \varepsilon,\bar t + \bar \varepsilon[$}. 
\end{equation}
To establish the first property in~\eqref{E_h_n} we recall property (f) in Lemma~\ref{L_2}. To establish the second property  in~\eqref{E_h_n} we first recall that, by construction, $\gamma_n (t) \in O(\bar t)$ for every $t \in ]\bar t - \bar \varepsilon,\bar t + \bar \varepsilon[$, where the set $O(\bar t)$ is the same as in~\eqref{e:obart}. Next, we recall that $Q_\theta = \tilde Q$ owing to~\eqref{E_Qtheta}. 
Up to subsequences,  $h_n$ and $h_{\theta,n}$ converge to some
 $\bar h, \bar h_\theta\in \R$: since the curves $\gamma_n$ uniformly converge to $\bar \gamma$ and $Q$ and $Q_\theta$ are both continuous functions, the equalities   \eqref{E_h_n} imply that  $Q(t,\bar\gamma(t))=\bar h$ and $Q_\theta(t,\bar \gamma(t))=\bar h_\theta$ for every $t \in ]\bar t - \bar \varepsilon, \bar t + \bar \varepsilon[$. For every $\varepsilon\in ]0,\bar \varepsilon[$ we set 
\begin{equation*}
t_1(\varepsilon):=\inf\{t \in [\bar t- \varepsilon, \bar t]: \bar \gamma(t)=\alpha\}, \qquad \mbox{and} \qquad t_2(\varepsilon):=\sup\{t \in [\bar t, \bar t+ \varepsilon]: \bar \gamma(t)=\alpha\}.
\end{equation*}
Since $\bar \gamma (\bar t) = \alpha$, then 
\begin{equation*}
Q(t_2(\varepsilon),\alpha)=\bar h=  Q(\bar t, \alpha) =Q(t_1(\varepsilon),\alpha) \qquad \mbox{and} \qquad Q_\theta(t_2(\varepsilon),\alpha)=\bar h_\theta=Q(\bar t, \alpha) = Q_\theta(t_1(\varepsilon),\alpha),
\end{equation*}
and hence  
\begin{equation}\label{E_int=0}
\int_{\bar t}^{t_2(\varepsilon)}\mathrm{Tr}[\rho b](t,\alpha^+) dt = 0 = \int_{\bar t}^{t_2(\varepsilon)}\mathrm{Tr}[\rho b \theta](t,\alpha^+) dt, \quad 
\int_{t_1(\varepsilon)}^{\bar t} \mathrm{Tr}[\rho b](t,\alpha^+) dt = 0 = \int_{t_1(\varepsilon)}^{\bar t} \mathrm{Tr}[\rho b \theta](t,\alpha^+) dt. 
\end{equation}
We now apply the same argument as in {\sc Case 1} with $E_n (\bar t)$ replaced by 
\begin{equation*}
E_+(\bar t, \varepsilon):=\{ (t,x)\in ]t_2(\varepsilon), \bar t +  \varepsilon[ \times ]\alpha,\beta[ : x < \bar \gamma(t) \},
\end{equation*}
and get
\begin{equation}\label{E_estimate+}
\left| \int_{t_2(\varepsilon)}^{\bar t + \varepsilon}\mathrm{Tr}[\rho b \theta](t,\alpha^+) dt \right| \le \|\theta\|_{L^\infty} \left| \int_{t_2(\varepsilon)}^{\bar t + \varepsilon}\mathrm{Tr}[\rho b](t,\alpha^+) dt\right|,
\end{equation}
which owing to \eqref{E_int=0} yields 
\begin{equation*}
\left| \int_{\bar t}^{\bar t + \varepsilon}\mathrm{Tr}[\rho b \theta](t,\alpha^+) dt \right| \le \|\theta\|_{L^\infty} \left| \int_{\bar t}^{\bar t + \varepsilon}\mathrm{Tr}[\rho b](t,\alpha^+) dt\right|.
\end{equation*}
We repeat the same argument as in {\sc Case 1} with $E_n (\bar t)$ replaced by 
\begin{equation*}
E_-(\bar t, \varepsilon):=\{ (t,x)\in ]\bar t - \varepsilon, t_1(\varepsilon)[ \times ]\alpha,\beta[ : x < \bar \gamma(t)  \}, 
\end{equation*}
and by relying on \eqref{E_int=0} we get
\begin{equation}\label{E_estimate-}
\left| \int_{\bar t-\varepsilon}^{\bar t}\mathrm{Tr}[\rho b \theta](t,\alpha^+) dt \right| \le \|\theta\|_{L^\infty} \left| \int_{\bar t-\varepsilon}^{\bar t}\mathrm{Tr}[\rho b](t,\alpha^+) dt\right|.
\end{equation}
By combining~\eqref{E_int=0}, \eqref{E_estimate+}, and \eqref{E_estimate-} we get 
\begin{equation*}
\left|\int_{\bar t - \varepsilon}^{\bar t + \varepsilon}\mathrm{Tr}[\rho b \theta](t,\alpha^+) dt \right| \le \|\theta\|_{L^\infty}  \int_{\bar t-\varepsilon}^{\bar t+\varepsilon}\left|\mathrm{Tr}[\rho b](t,\alpha^+)\right| dt, 
\quad \text{for every $\varepsilon\in ]0,\bar \varepsilon[$}
\end{equation*}
and this concludes the proof of the lemma.
\end{proof}
Here is our main result yielding the trace renormalization property
\begin{theorem} \label{t:fumetti}
Assume that $b \in L^\infty (]0, T[ \times ]\alpha, \beta[)$ is a nearly incompressible vector field with density $\rho$, and that $\theta\in L^\infty(]0,T[\times ]\alpha,\beta[)$ 
is a solution of~\eqref{IVPtheta2}, in the sense of Definition~\ref{d:ibvp}. Then for every $q \in C^0(\R)$ we have 
\begin{equation}\label{E_trace}
\mathrm{Tr}[\rho b q(\theta)] (t, \alpha^+) =  
\begin{cases}
\mathrm{Tr}[\rho b] (t, \alpha^+)q\left(\displaystyle{\frac{\mathrm{Tr}[\rho b \theta] (t, \alpha^+)}{\mathrm{Tr}[\rho b ] (t, \alpha^+)}}\right) & \mbox{if }\mathrm{Tr}[\rho b] (t, \alpha^+) \ne 0, \\
0 &\mbox{otherwise}
\end{cases} 
\qquad \text{for a.e. $t \in ]0,T[$}
\end{equation}
An analogous statement holds for the trace $\mathrm{Tr}[\rho b q(\theta)] (\cdot, \beta^-) $.
\end{theorem}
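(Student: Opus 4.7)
The plan is to decompose $]0,T[$ according to the sign of $\mathrm{Tr}[\rho b](t,\alpha^+)$ and handle each piece separately. The cornerstone is the renormalization observation that $q(\theta)$ is itself a solution, in the sense of Definition~\ref{d:ibvp}, of~\eqref{IVPtheta2} with data $q(\theta_0)$, $q(\bar\theta)$, $q(\underline\theta)$. This follows from the explicit representation~\eqref{E_theta_characteristics} in Proposition~\ref{P_existence}: composing the piecewise formula with $q$ yields exactly the function built by that proposition for the modified data, and since both the equation and the boundary conditions depend on $\theta$ only through $\rho\theta$, the property that $\rho q(\theta)$ coincides $\Leb^2$-a.e. with $\rho$ times a solution suffices to make $q(\theta)$ itself a solution.

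Given this, the case $\mathrm{Tr}[\rho b](t,\alpha^+)=0$ is immediate: Lemma~\ref{L_estimate_traces} applied to $q(\theta)$ gives $|\mathrm{Tr}[\rho b q(\theta)](t,\alpha^+)|\le\|q(\theta)\|_{L^\infty}|\mathrm{Tr}[\rho b](t,\alpha^+)|=0$, matching the second branch of~\eqref{E_trace}. On the inflow set $\{\mathrm{Tr}[\rho b](t,\alpha^+)<0\}$, the boundary conditions~\eqref{e:dataibvp2} applied to both $\theta$ and $q(\theta)$ give $\tilde\theta(t):=\mathrm{Tr}[\rho b\theta]/\mathrm{Tr}[\rho b]=\bar\theta(t)$ and $\mathrm{Tr}[\rho b q(\theta)]=\mathrm{Tr}[\rho b]\,q(\bar\theta)=\mathrm{Tr}[\rho b]\,q(\tilde\theta)$, as required.

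The main obstacle is the outflow case $\{\mathrm{Tr}[\rho b](t,\alpha^+)>0\}$, where no boundary condition is prescribed. Here I would adapt the argument in the proof of Lemma~\ref{L_estimate_traces}: at a Lebesgue point $\bar t$ of this set take $\delta_n\to 0^+$, set $\gamma_n:=\gamma_{\bar t,\alpha+\delta_n}$, and let $\epsilon_n:=t^*(\gamma_n)-\bar t$, which in outflow tends to $0^+$ and satisfies $\gamma_n(\bar t+\epsilon_n)=\alpha$ while $\gamma_n(\bar t-\epsilon_n)\in\,]\alpha,\beta[$. Since $q(\theta)$ is a solution, Lemma~\ref{L_3} gives that the normal trace of $(\rho q(\theta),\rho b q(\theta))$ along $\gamma_n$ vanishes, and the divergence identity of Lemma~\ref{l:p32acm} applied on the region $\{\bar t-\epsilon_n<t<\bar t+\epsilon_n,\ \alpha<x<\gamma_n(t)\}$ yields, simultaneously for each $\eta\in\{1,\theta,q(\theta)\}$,
\begin{equation*}
\int_{\bar t-\epsilon_n}^{\bar t+\epsilon_n}\mathrm{Tr}[\rho b\eta](t,\alpha^+)\,dt=\int_\alpha^{\gamma_n(\bar t-\epsilon_n)}\rho\eta(\bar t-\epsilon_n,x)\,dx.
\end{equation*}

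To conclude, I would use~\eqref{E_theta_characteristics} to express $\theta(\bar t-\epsilon_n,x)$ and $q(\theta)(\bar t-\epsilon_n,x)$ as compositions of $\theta_0$ or $\bar\theta$ (respectively $q(\theta_0)$ or $q(\bar\theta)$) with the backward characteristic endpoint of $(\bar t-\epsilon_n,x)$; the consistency with $q$-composition follows from the first paragraph. A change of variables to the characteristic endpoint, using $Q(\bar t-\epsilon_n,x)=Q(0,\gamma_{\bar t-\epsilon_n,x}(0))$ or $Q(\bar t-\epsilon_n,x)=Q(t_*(\gamma_{\bar t-\epsilon_n,x}),\alpha)$ from Lemma~\ref{L_2}(f), transforms $\rho(\bar t-\epsilon_n,x)\,dx$ into either $[\rho]_0\,dy$ or $|\mathrm{Tr}[\rho b](\cdot,\alpha^+)|\,d\tau$ via $\partial_x Q=\rho$, so each spatial integral becomes a weighted average of $\theta_0$ and $\bar\theta$ (respectively of $q(\theta_0)$ and $q(\bar\theta)$), with weight and averaging region identical across the three choices of $\eta$. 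Dividing the identity for $\eta=\theta$ (resp.\ $\eta=q(\theta)$) by the one for $\eta=1$ and passing to the limit $n\to\infty$, Lebesgue differentiation applied at Lebesgue points of $\mathrm{Tr}[\rho b\theta]$, $\mathrm{Tr}[\rho b q(\theta)]$, $\mathrm{Tr}[\rho b]$ and of $\theta_0$, $\bar\theta$ (in the appropriate weighted sense) shows that both $\tilde\theta(\bar t)$ and $\widetilde{q(\theta)}(\bar t):=\mathrm{Tr}[\rho b q(\theta)](\bar t,\alpha^+)/\mathrm{Tr}[\rho b](\bar t,\alpha^+)$ are point-evaluations of $\theta_0$ or $\bar\theta$, respectively of their $q$-images, at the same point; by continuity of $q$, $\widetilde{q(\theta)}(\bar t)=q(\tilde\theta(\bar t))$. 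The delicate part is precisely this last step: correctly partitioning $]\alpha,\gamma_n(\bar t-\epsilon_n)[$ into those characteristics reaching $t=0$ and those hitting $\alpha$ (both can occur when $\bar t$ is preceded by an inflow interval), executing the change of variables when $\rho$ or $[\rho]_0$ may vanish, and verifying that the limit commutes with $q$ at a.e. $\bar t$ in the outflow set. The analogous identity at $\beta$ follows by the symmetric argument.
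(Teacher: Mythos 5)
Your handling of the inflow set and the zero set matches the paper exactly: on $F_\alpha^0 := \{\mathrm{Tr}[\rho b](\cdot,\alpha^+)=0\}$ the paper too applies Lemma~\ref{L_estimate_traces} to $q(\theta)$, and on $F_\alpha^- := \{\mathrm{Tr}[\rho b](\cdot,\alpha^+)<0\}$ it uses the same observation you make at the outset, namely that $q(\theta)$ is itself a solution with data $q(\theta_0), q(\bar\theta), q(\underline\theta)$ by Proposition~\ref{P_existence}, so both boundary conditions in~\eqref{e:dataibvp2} can be compared directly.

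The real discrepancy is on the outflow set $F_\alpha^+:=\{\mathrm{Tr}[\rho b](\cdot,\alpha^+)>0\}$, and here you have missed a simple structural trick. The paper does not re-run a weak-characteristics/Lebesgue-differentiation argument there. Instead it sets
$\theta'(t,x):=\theta(T-t,x)$, $\rho'(t,x):=\rho(T-t,x)$, $b'(t,x):=-b(T-t,x)$,
checks that $\theta'$ solves~\eqref{IVPtheta2} for $(\rho',b')$ with the (well-defined, by Lemma~\ref{L_estimate_traces}) boundary data
$\bar\theta' = \mathrm{Tr}[\rho'b'\theta'](\cdot,\alpha^+)/\mathrm{Tr}[\rho'b'](\cdot,\alpha^+)$ on $\{\mathrm{Tr}[\rho'b'](\cdot,\alpha^+)\ne 0\}$
and $\bar\theta'=0$ otherwise, and then observes that $\mathrm{Tr}[\rho'b'](t,\alpha^+)=-\mathrm{Tr}[\rho b](T-t,\alpha^+)$, so the outflow set for $(\rho,b)$ becomes the inflow set for $(\rho',b')$. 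Step~1 applied to $\theta'$ then yields~\eqref{E_trace} on $F_\alpha^+$ with no further estimates.

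By contrast, your plan for $F_\alpha^+$ reproduces the machinery of Lemma~\ref{L_estimate_traces} (weak characteristics $\gamma_n$, the set $\Lambda=\{\bar t-\epsilon_n<t<\bar t+\epsilon_n,\alpha<x<\gamma_n(t)\}$, coarea change of variables through $Q$). This is not wrong in spirit, but it is substantially harder than what you need, and you have not actually closed it: the bulleted concerns at the end of your last paragraph — partitioning $]\alpha,\gamma_n(\bar t-\epsilon_n)[$ according to whether characteristics hit $\{t=0\}$ or $\{x=\alpha\}$, executing the change of variables on the degenerate set where $\rho$ or $[\rho]_0$ vanish, and exchanging the limit with $q$ — are precisely the content that would have to be proved, and they are nontrivial (in particular, even at a Lebesgue point $\bar t$ of $\mathrm{Tr}[\rho b](\cdot,\alpha^+)$ you need a Lebesgue-point property of the composed boundary/initial data with respect to the weighted measures $[\rho]_0\Leb^1$ and $-\mathrm{Tr}[\rho b](\cdot,\alpha^+)\Leb^1\llcorner E^-$, which requires an argument in the style of Step~1 of Lemma~\ref{L_tildeQ}). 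As written, the outflow case is a sketch with acknowledged gaps rather than a proof. Replacing that paragraph with the time-reversal reduction fixes this cleanly and avoids all the degeneracy issues you flagged.
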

The above result should be compared to~\cite[Theorem 4.2]{AmbrosioCrippaManiglia}: we have much weaker regularity assumptions on $b$ since we only require boundedness and remove the $BV$ regularity hypothesis, but we restrict to the one-dimensional setting. Note that several counterexamples (see for instance~\cite{CrippaDonadelloSpinolo}) show that in higher space dimension the trace renormalization property fails as soon as the $BV$ regularity deteriorates at the domain boundary. 
\begin{proof}[Proof of Theorem~\ref{t:fumetti}]
We separately establish~\eqref{E_trace} on the sets
\begin{equation*}
\begin{split}
F_\alpha^-&:=\{ t \in ]0,T[: \mathrm{Tr}[\rho b](t,\alpha^+)<0\}, \\
 F_\alpha^0&:=\{ t \in ]0,T[: \mathrm{Tr}[\rho b](t,\alpha^+)=0\}, \\
  F_\alpha^+&:=\{ t \in ]0,T[: \mathrm{Tr}[\rho b](t,\alpha^+)>0\}.
\end{split}
\end{equation*}
{\sc Step 1:} we establish~\eqref{E_trace} on $F_\alpha^-$.
Owing to Proposition~\ref{P_existence}, $q(\theta)$ is a solution of the initial boundary value problem 
with initial and boundary data $q(\bar \theta), q([\rho]_0), q( \underline \theta)$. In particular, 
\begin{equation}\label{E_trace_1}
\mathrm{Tr}[\rho bq (\theta)](t,\alpha^+) = \mathrm{Tr}[\rho b](t,\alpha^+) q(\bar \theta(t,\alpha^+)) \quad \text{for a.e. $t \in F_\alpha^-$}.
\end{equation}
Since 
\begin{equation}\label{E_trace_2}
\mathrm{Tr}[\rho b\theta](t,\alpha^+) = \mathrm{Tr}[\rho b](t,\alpha^+) \bar \theta(t,\alpha^+) \quad \text{for a.e. $t \in F_\alpha^-$},
\end{equation}
then by combining \eqref{E_trace_1} and \eqref{E_trace_2} we get \eqref{E_trace} a.e. on 
$ F_\alpha^-$. \\
{\sc Step 2:} by applying Lemma \ref{L_estimate_traces} to the solution $q(\theta)$ in place of $\theta$ we conclude that $\mathrm{Tr}[\rho b q(\theta)](t,\alpha^+)=0$ for a.e. $t \in  F_\alpha^0$, and this yields~\eqref{E_trace} a.e. on $F^0_\alpha$.\\
{\sc Step 3:} we are left to establish \eqref{E_trace} a.e. on $F^+_\alpha$. To this end, we point out 
that by setting  
\begin{equation*}
 \theta'(t,x):= \theta(T-t,x), \qquad  \rho'(t,x):= \rho (T-t,x), \qquad  b'(t,x)=-b(T-t,x),
\end{equation*}
we obtain
\begin{equation*}
\partial_t  \rho' + \partial_x ( \rho'  b')= 0, \qquad \partial_t ( \rho'  \theta') + \partial_x ( \rho' b' \theta')= 0, \qquad \mbox{in }\mathcal D'(]0,T[\times ]\alpha,\beta[).
\end{equation*}
Also, we have 
\begin{equation}\label{E_-t1}
\mathrm{Tr}[  \rho'  b'](t,\alpha^+) = - \mathrm{Tr}[\rho b](T-t,\alpha^+) \qquad \mbox{and} \qquad
\mathrm{Tr}[  \rho'  b'  \theta'](t,\alpha^+) = - \mathrm{Tr}[\rho b \theta](T-t,\alpha^+)
\end{equation}
and similarly
\begin{equation*}
\mathrm{Tr}[  \rho'  b'](t,\beta^-) = - \mathrm{Tr}[\rho b](T-t,\beta^-) \qquad \mbox{and} \qquad
\mathrm{Tr}[  \rho'  b'  \theta'](t,\beta^-) = - \mathrm{Tr}[\rho b \theta](T-t,\beta^-).
\end{equation*}
In particular $\theta'$ is a solution of the initial boundary value problem \eqref{IVPtheta2} with initial datum $\theta_0'= \theta(T,\cdot)$
and boundary data
\begin{equation*}
\bar \theta' = \begin{cases}
\displaystyle{\frac{\mathrm{Tr}[  \rho'  b'  \theta'](t,\alpha^+) }{\mathrm{Tr}[  \rho'  b'](t,\alpha^+)}} & \mbox{if }\mathrm{Tr}[  \rho'  b'](t,\alpha^+) \ne 0, \\
0 &\mbox{otherwise},
\end{cases}
\qquad \mbox{and} \qquad 
\underline \theta' = \begin{cases}
\displaystyle{
\frac{\mathrm{Tr}[  \rho'  b'  \theta'](t,\beta^-) }{\mathrm{Tr}[  \rho'  b'](t,\beta^-)}}
 & \mbox{if }\mathrm{Tr}[  \rho'  b'](t,\beta^-) \ne 0, \\
0 &\mbox{otherwise}.
\end{cases}
\end{equation*}
Owing to Lemma \ref{L_estimate_traces}, we have $\bar \theta', \underline \theta' \in L^\infty (]0,T[)$ and by applying {\sc Step 1} to $\theta'$ we obtain that for $\mathcal L^1$-a.e. $t \in ]0,T[$ such that $\mathrm{Tr}[\rho'b'](t,\alpha^+)<0$ it holds 
\begin{equation*}
\mathrm{Tr}[\rho' b'q (\theta')](t,\alpha^+) = \mathrm{Tr}[\rho' b'](t,\alpha^+) q(\bar \theta'(t,\alpha^+)) \quad \text{for a.e. $t \in ]0,T[$ such that $\mathrm{Tr}[\rho'b'](t,\alpha^+)<0$ }.
\end{equation*}
Owing to~\eqref{E_-t1}, this establishes~\eqref{E_trace} a.e. on $F^+_\alpha$. 
\end{proof}
\section{Propagation of $BV$ regularity and stability} \label{s:confetto}
\subsection{Proof of Proposition~\ref{p:the}} \label{ss:pthe}
\label{s:ibvp_gen}
In this section we establish the proof of Proposition~\ref{p:the} and as a byproduct we also provide an alternative proof of the existence statement in Theorem~\ref{t:genex}, see Remark~\ref{r:cicerchie}.  
We rely on an approximation argument inspired by~\cite[Proof of Proposition 3.13]{DL07}, however the approximation we use here is more complicated because we need to make sure that the normal traces of the approximation converge strongly to the normal trace of the limit. We proceed according to the following steps. \\
{\sc Step 1:} we define an extension of $\rho$ and $b \rho $. We set 
\be \label{e:AB}
    A(t, x): = 
   \left\{
   \begin{array}{ll}
         \rho (t, x) & (t,x) \in [0,T]\times ]\alpha, \beta[ \\
         \rho   (T, x) & (t,x) \in ]T,+\infty[\times ]\alpha, \beta[\\
         1 & x < \alpha, \; x > \beta \\
   \end{array} 
   \right.
   \quad 
   B (t, x) : = 
  \left\{
  \begin{array}{ll}
           b \rho(t, x) & (t,x) \in [0,T]\times]\alpha, \beta[ \\ 
           - \mathrm{Tr}[b \rho] (t, \alpha^+) & (t,x)\in [0,T]\times ]-\infty,\alpha[ \\
            \mathrm{Tr}[b \rho] (t, \beta^-) & (t,x)\in[0,T]\times]\beta,+\infty[ \\
            0 & t>T\,.
  \end{array}
  \right.
\eq
In the previous expression, $\rho(T, \cdot)$ denotes the normal trace of the vector field $(\rho, b \rho)$ at $t=T$.
By direct check, one can verify that, owing to the inequalities $\| \mathrm{Tr}[b \rho] (t, \alpha^+)\|_{L^\infty}, \| \mathrm{Tr}[b \rho] (t, \beta^-)\|_{L^\infty} \leq \| b \rho \|_{L^\infty}$, we have 
\be \label{e:focaccia}
     |B(t, x)| \leq \max \{ \| b \|_{L^\infty}, \| b \rho \|_{L^\infty} \} |A(t, x)|, 
    \quad \text{a.e on $]0, +\infty[ \times \R$}.
\eq
Note that 
$$
    \partial_t A + \partial_x B =0 \quad \text{on $]0, T[ \times ]-\infty, \alpha[$ and on $]0, T[ \times ] \beta, + \infty[$}.
$$
We now want to show 
\be \label{e:sututtoerre}
       \partial_t A + \partial_x B =0 \quad \text{on $]0, T[ \times \R$.}
\eq
To this end, we recall~\cite[Lemma 3.2]{CrippaDonadelloSpinolo} and we apply Lemma~\ref{l:p32acm} with $C=(A, B)$ and $\Lambda= ]0, T[ \times ]- \infty, \alpha[$ and ${\Lambda = ]0, T[ \times ]\beta, + \infty[}$ and define the normal traces $\mathrm{Tr}[B](\cdot, \alpha^-)$ and $\mathrm{Tr} [B] (\cdot, \beta^+)$, that is, very loosely speaking, the normal traces of $B$ ``from outside" the interval $]\alpha, \beta[$. It turns out that 
\be \label{e:annullo}
   \mathrm{Tr}[B](\cdot, \alpha^-) =  -
   \mathrm{Tr}[b\rho](\cdot, \alpha^+), \qquad 
   \mathrm{Tr} [B] (\cdot, \beta^+) = -
     \mathrm{Tr}[b \rho](\cdot, \beta^-).
\eq 
To establish~\eqref{e:sututtoerre} we fix $\psi \in C^\infty_c (]0, T[ \times \R)$ and point out that
\begin{equation*}
\begin{split}
         \int_0^T & \int_\R A \partial_t \psi + B \partial_x \psi dx dt  =
          \int_0^T \int_{-\infty}^\beta A \partial_t \psi + B \partial_x \psi dx dt +
           \int_0^T \int_{\alpha}^\beta A \partial_t \psi + B \partial_x \psi dx dt \\
         & \qquad \qquad +
           \int_0^T \int_{\beta}^{+ \infty} A \partial_t \psi + B \partial_x \psi dx dt \\
        & = \int_0^T \mathrm{Tr} [B](\cdot, \beta^-) \psi (\cdot, \beta) dt +
        \int_0^T \mathrm{Tr} [b \rho](\cdot, \beta^+) \psi (\cdot, \beta) dt +
         \int_0^T \mathrm{Tr} [b \rho](\cdot, \alpha^-) \psi (\cdot, \alpha) dt \\
      & \qquad \qquad +
          \int_0^T \mathrm{Tr} [B](\cdot, \alpha^+) \psi (\cdot, \alpha) dt
       \stackrel{\eqref{e:annullo}}{=} 0,
\end{split}
\end{equation*}
that is~\eqref{e:sututtoerre}. \\
{\sc Step 2:} we define the approximation argument. First, we introduce a cut-off function $\zeta$ such that 
\be 
\label{e:zeta}
        \zeta \in C^\infty (\R), \quad 0 \leq \zeta \leq 1, \quad 
        \zeta \equiv 1 \, \text{on $] - \infty, \alpha + \delta]$}, \quad 
         \zeta \equiv 0 \; \text{on $[  \beta- \delta, + \infty [$}, \quad \delta: = \frac{\beta - \alpha}{10}.
\eq
Next, we introduce two anisotropic convolution kernels $\omega \in C^\infty_c (]0, 1[)$ and $\xi \in C^\infty_c (]-1, 0[) $. We set 
\be \label{e:kernels}
   \omega_n(z): = n \omega(n z), \quad   
   \xi_n (z): = n \xi (nz), \quad 
  \gamma_n (t, x) : = n^2 \xi \left( n t \right) \omega \left( n x \right), \quad
   \eta_n (t, x) : = n^2 \xi \left(n t \right) \xi \left( n x \right) 
\eq
and 
\be \label{e:bienne}
    B_n : = \zeta [B  \ast \gamma_n ]+ (1- \zeta) [B \ast \eta_n], \qquad 
    \rho_n : = \zeta [A  \ast \gamma_n ]+ (1- \zeta) [A \ast \eta_n] + \frac{1}{n}, \qquad 
    b_n: = \frac{B_n}{\rho_n}. 
\eq
Owing to~\eqref{e:focaccia}, we have  
\be \label{e:pomodorini}
      \| b_n \|_{L^\infty} \leq \max \{ \| b \|_{L^\infty}, \| b \rho \|_{L^\infty} \} . 
\eq
We conclude by defining the approximation of the data: we fix $n \in \mathbb N$, recall that $\bar \theta$, $\underline \theta \in BV (]0, T[)$ and $\theta_0 \in BV (]\alpha, \beta[)$ and we introduce the functions $\bar \theta^\ast_n, \underline \theta^\ast_n: ]0, + \infty[ \to \R$, $\theta_{0,n}^\ast: \R \to \R$ by setting 
\be \label{e:liscia}
   \bar \theta^\ast_n (t) :=
    \left\{
    \begin{array}{ll}
              \theta_0(\alpha^+) & 0< t < 2/n \\
              \bar \theta (t) & 2/n < t < T \\
              \bar \theta (T^-)  & T< t \\
    \end{array}
    \right.   
   \quad 
    \underline \theta^\ast_n (t) :=
    \left\{
    \begin{array}{ll}
              \theta_0(\beta^-) & 0< t < 2/n \\
              \underline \theta (t) & 2/n < t < T \\
              \underline \theta (T^-)  & T< t \\
    \end{array}
    \right. 
\eq
and 
\be \label{e:liscia2}
    \theta_{0,n}^\ast (x) : = 
    \left\{
    \begin{array}{ll}
    \theta_0(\alpha^+) & x< \alpha \\
    \theta_0(x) & \alpha < x < \beta \\
    \theta_0 (\beta^-) & \beta < x. \\
    \end{array}
    \right.
\eq
We set 
\be \label{e:sedia}
   \theta_{0, n} : = \theta^\ast_{0,n} \ast \xi_n, \qquad \bar \theta_n : = \bar \theta^\ast_n \ast \xi_n, 
  \qquad \underline \theta_n : = \underline \theta^\ast_n \ast \xi_n,
\eq
where $\xi_n$ is the same as in~\eqref{e:kernels}. \\
{\sc Step 3:} we study the properties of the approximation. We have 
\be \label{e:rbm}
   \rho_n \to \rho \; \text{in $L^1 (]0, T[ \times ]\alpha, \beta[)$ as $n \to + \infty$}, \qquad \| \rho_n \|_{L^\infty} \leq 
C( \| \rho\|_{L^\infty} ) ,    
\eq
 and 
\be 
\label{e:rbm2}
        b_n \rho_n  \to b \rho \; \text{in $L^1 ( ]0, T[  \times]\alpha, \beta[)$ as $n \to + \infty$}, \qquad \| b_n \rho_n \|_{L^\infty} \leq C( \| b \rho \|_{L^\infty}). 
\eq
Also, 
\be \label{e:btm}
    \bar \theta_n \to \bar \theta, \; \underline \theta_n \to \underline \theta  \; \text{in $L^1 (]0, T[)$ as $n \to + \infty$}, \qquad \| \bar \theta_n \|_{L^\infty} \leq  \| \bar \theta \|_{L^\infty}, 
     \| \underline \theta_n \|_{L^\infty} \leq  \| \underline \theta \|_{L^\infty}
\eq
and 
\be \label{e:agenda}
     \mathrm{TotVar} \ \bar \theta_n \leq 
     \mathrm{TotVar} \ \bar \theta + |\theta_0 (\alpha^+) - \bar \theta (0^+)|, 
     \qquad 
     \mathrm{TotVar} \ \underline \theta_n \leq 
     \mathrm{TotVar} \ \underline \theta + |\theta_0 (\beta^-) - \underline \theta (0^+)|.
\eq
Finally, 
\be 
\label{e:btm2}
        \theta_{0, n} \to \theta_0 \; \text{in $L^1 (]\alpha, \beta[)$ as $n \to + \infty$}, \; 
     \quad \|  \theta_{0, n} \|_{L^\infty} \leq 
 \|  \theta_0 \|_{L^\infty}, \quad 
     \mathrm{TotVar} \ \theta_{0, n} \leq \mathrm{TotVar} \theta_{0}. 
\eq
We now recall that $\rho_n$ and $b_n$ are smooth functions and we establish the following convergence result:
\be \label{e:accan}
    h_n : = \partial_t \rho_n + \partial_x [b_n \rho_n ], \qquad h_n \to 0 \; \text{in $L^1(]0, T[ \times ]\alpha, \beta[)$}. 
\eq
We use the following chain of equalities: 
\begin{equation*}
\begin{split}
         h_n & =  \partial_t \rho_n + \partial_x [b_n \rho_n ] \stackrel{\eqref{e:bienne}}{=}
        \zeta [\partial_t A \ast \gamma_n]  + (1-\zeta) [\partial_t A \ast \eta_n] +
        \zeta [\partial_x B \ast \gamma_n]  + (1-\zeta) [\partial_x B \ast \eta_n] \\
       & \quad +
       \zeta' [B \ast \gamma_n - B \ast \eta_n] \\
       & = \zeta [(\partial_t A + \partial_x B) \ast \gamma_n] +
        (1-\zeta) [(\partial_t A + \partial_x B) \ast \eta_n] +  \zeta' [B \ast \gamma_n - B \ast \eta_n]
       \stackrel{\eqref{e:sututtoerre}}{=}  \zeta' [B \ast \gamma_n - B \ast \eta_n]
\end{split}
\end{equation*}
and this last term converges to $0$ in $L^1$ because both $ \zeta' B \ast \gamma_n$ and $ \zeta' B \ast \eta_n$ converge to $\zeta' B$ in $L^1$. 
To conclude, we establish the convergence results
\be 
\label{e:convtraccia}
      - b_n \rho_n (\cdot, \alpha) \to \mathrm{Tr}[b \rho] (\cdot, \alpha^+)  \; 
      \text{in $L^1 (]0, T[) $}, 
      \quad 
       b_n \rho_n (\cdot, \beta) \to \mathrm{Tr}[b \rho] (\cdot, \beta^-)  \; 
      \text{in $L^1 (]0, T[) $}
\eq
as $n \to + \infty$. The reason why we use the approximation construction described in {\sc Step 2} and in particular the anisotropic kernels is exactly to have~\eqref{e:convtraccia}. First, we point out that, if $x < \alpha + \delta$, then $B_n= B \ast \gamma_n$ and, owing to the space anisotropy of the convolution kernel $\gamma_n$ and to the particular structure of $B$, see~\eqref{e:AB}, we have
$$
     b_n \rho_n (\cdot, x)= B_n (\cdot, x)  = - \mathrm{Tr}[\rho b] (\cdot, \alpha^+) \ast \xi_n \quad \text{if $x\leq \alpha$},
$$
that is $b_n \rho_n$ is constant with respect to $x$ on $]- \infty, \alpha]$. 
In the above formula, the convolution is computed with respect to the time variable only. By evaluating the previous formula at $x = \alpha$ and taking the limit $n \to +\infty$ we establish the first convergence result in~\eqref{e:convtraccia}. The proof of the second convergence result in~\eqref{e:convtraccia} is analogous. \\
{\sc Step 4:} we construct the approximating sequences $\{ \theta_n \}_{ n \in \mathbb N}$  by solving the initial-boundary value problem
\be 
\label{e:smootht}
\left\{
       \begin{array}{ll}
              \partial_t \theta_n + b_n \partial_x \theta_n =0 \\
              \theta_n (t, \alpha) = \bar \theta_n (t) & t \in \Gamma^-_{\alpha n} \\
              \theta_n (t, \beta)= \underline \theta_n (t) & t \in \Gamma^-_{\beta n} \\
              \theta_n (0, \cdot) = \theta_{0, n} . \\
      \end{array}
\right.
\eq
In the previous expression, $\Gamma^-_{\alpha n}$ denotes the subset of $\{ (t, \alpha), \; t \in ]0, T[ \}$ such that the characteristic lines of $b_n$ starting at $\Gamma^-_{\alpha n}$ (which are well defined since $b_n$ is a smooth function) are entering the set $]0, T[ \times ]\alpha, \beta[$, whereas $\Gamma^-_{\beta n}$ denotes the subset of $\{ (t, \beta), \; t \in ]0, T[ \}$ such that the characteristic lines starting at $\Gamma^-_{\beta n}$  are entering the set $]0, T[ \times ]\alpha, \beta[$. Note that, since $\rho_n \ge 0$,
$$
    \Gamma^-_{\alpha n} \subseteq \{ t: b_n \rho_n (t, \alpha) \ge 0 \}, \qquad 
       \Gamma^-_{\beta n} \subseteq \{ t: b_n \rho_n (t, \beta) \leq 0 \}.
$$
The equation at the first line of~\eqref{e:smootht} is a transport equation with smooth coefficients and hence the solution of~\eqref{e:smootht} can be constructed by relying on the classical method of the characteristics.  Note furthermore that, owing to the $L^\infty$ bounds in~\eqref{e:btm} and~\eqref{e:btm2}, 
\be \label{e:linftym}
    \| \theta_n \|_{L^\infty} \leq \max \{ \| \bar \theta \|_{L^\infty} , \|  \theta_0 \|_{L^\infty}, \| \underline \theta \|_{L^\infty}\}. 
\eq 
We now establish a uniform total variation bound on $\theta_n$. First, we point out that $\theta_n$ is a smooth function and from the first line of~\eqref{e:smootht} we get 
\be \label{e:continuityb}
    \partial_t v_n + \partial_x [b_n v_n] =0, \qquad v_n := \partial_x \theta_n .
\eq
We now fix $g \in C^1(\R)$ and by multiplying the above equation by $g'(v_n)$
we arrive at 
\be 
\label{e:continuityb2}
       \partial_t g(v_n) + \partial_x [ b_n g (v_n) ] + \partial_x b_n [ g'(v_n) v_n - g (v_n)]  =0. 
\eq
By a standard approximation procedure on $g$, we conclude that $|v_n|$ is a distributional  solution of the continuity equation~\eqref{e:continuityb}. This yields 
\begin{equation*}
\begin{split}
      \frac{d}{dt} \int_\alpha^\beta |v_n| (t, x) dx  & = - \int_\alpha^\beta \partial_x [b_n |
v_n|] (t, x) dx =
     - b_n |v_n| (t, \alpha) + b_n |v_n| (t, \beta) \leq
     |b_n v_n| (t, \alpha) + |b_n v_n| (t, \beta) \\
     & \stackrel{\partial_t \theta_n + b_n \partial_x \theta_n =0}{=} 
     |\partial_t \theta_n | (t, \alpha) + |\partial_t \theta_n| (t, \beta)  
\end{split}
\end{equation*}
and by time-integrating the above equality and recalling~\eqref{e:agenda} and~\eqref{e:btm2} we arrive at 
\be \label{e:ceci}
    \mathrm{TotVar} \ \theta_n (t, \cdot) \leq 
    \mathrm{TotVar} \ \theta_0 +  
     \mathrm{TotVar} \ \underline \theta + 
     \mathrm{TotVar} \ \bar \theta + |\theta_0 (\alpha^+) - \bar \theta (0^+)| +
    |\theta_0 (\beta^-) - \underline \theta (0^+)|.
\eq
By using the first equation in~\eqref{e:smootht} and recalling~\eqref{e:pomodorini} we also get 
\be \label{e:lenticchie}
     |D \theta_n |\leq C (T, \| b \|_{L^\infty}, \| b \rho \|_{L^\infty}, \mathrm{TotVar} \ \theta_0 +  
     \mathrm{TotVar} \ \underline \theta,
     \mathrm{TotVar} \ \bar \theta, |\theta_0 (\alpha^+) - \bar \theta (0^+)|, 
    |\theta_0 (\beta^-) - \underline \theta (0^+)|  ).
\eq
In the above expression, $|D \theta_n|$ denotes the total variation of the distributional gradient of $\theta_n$ over $]0, T[ \times ]\alpha, \beta[$. \\
{\sc Step 5:} we pass to the limit. Owing to~\eqref{e:accan},  $\rho_n \theta_n$ satisfies 
\be 
\label{e:appeq}
    \partial_t [\rho_n \theta_n ] + \partial_x [ b_n \rho_n \theta_n] = h_n \theta_n.
\eq
We can argue as in~\cite[pp. 9-10]{CCS17} and conclude that $\theta_n$ converges to a solution $\theta$ of~\eqref{IVPtheta2} satisfying~\eqref{e:linftysol2}. Note that to apply~\cite[Lemma 4.2]{CCS17} we need~\eqref{e:convtraccia} but we do not need~\eqref{e:ceci} since weak$^\ast$ convergence of $\theta_n$ suffices to pass to the limit. However, by combining~\eqref{e:lenticchie} and~\eqref{e:linftym} and applying the Helly-Kolmogorov Compactness Theorem we conclude that $\theta_n$ strongly converges to $\theta$ in $L^1(]0, T[ \times ]\alpha, \beta[)$ and by passing to the limit in~\eqref{e:ceci} and~\eqref{e:lenticchie} we arrive at~\eqref{e:the} and~\eqref{e:caffelatte}. 
\begin{remark} \label{r:cicerchie}
The argument in \S\ref{ss:pthe} can be easily modified to get an alternative proof of the existence result in Theorem~\ref{t:genex}. Indeed, assume $\theta_0 \in L^\infty (]\alpha, \beta[)$, $\bar \theta, \underline \theta \in L^\infty (]0, T[)$ and set
\be \label{e:liscia3}
   \bar \theta^\ast_n (t) :=
    \left\{
    \begin{array}{ll}
             0 & 0< t < 2/n \\
              \bar \theta (t) & 2/n < t < T \\
              0 & T< t \\
    \end{array}
    \right.   
   \quad 
    \underline \theta^\ast_n (t) :=
    \left\{
    \begin{array}{ll}
            0 & 0< t < 2/n \\
              \underline \theta (t) & 2/n < t < T \\
             0  & T< t \\
    \end{array}
    \right. 
\eq
and 
\be \label{e:liscia4}
    \theta_{0,n}^\ast (x) : = 
    \left\{
    \begin{array}{ll}
      0 & x< \alpha  + 2/n\\
    \theta_0(x) & \alpha + 2/n < x < \beta - 2/n \\
    0 & \beta - 2/n< x. \\
    \end{array}
    \right.
\eq
We then define the functions $\theta_{0,n}, \bar \theta_n$ and $\underline \theta_n$ as in~\eqref{e:sedia} and the function $\theta_n$ as in~\eqref{e:smootht}. All the analysis in {\sc Step 4} and {\sc Step 5} of \S\ref{ss:pthe} carries over, except for the proof of the $BV$ bounds. However, as pointed out in {\sc Step 5} the weak$^\ast$ convergence of $\theta_n$ is enough to pass to the limit in the formulation of the initial-boundary value problem and this yields existence of a solution of~\eqref{IVPtheta2}, in the sense of Definition~\ref{d:ibvp}. 
\end{remark}
\subsection{Continuous dependence on the initial and boundary data}\label{Ss_continuity}
\begin{proposition} \label{p:caffe}
If $\bar \theta_n$, $\theta_{0,n}$ and $\underline \theta_n$ converge respectively to $ \bar \theta$, $\theta_0$ and $\underline \theta$ with respect to the weak* topology in $L^\infty$, then $\rho \theta_n$ converges to $\rho \theta$ with respect to the weak* topology in $L^\infty$.

Moreover if $\bar \theta_n$, $\theta_{0,n}$ and $\underline \theta_n$ converge respectively to $ \bar \theta$, $\theta_0$ and $\underline \theta$ with respect to the strong topology in $L^1$, then $\rho \theta_n$ converges to $\rho \theta$ with respect to the strong topology in $L^1$.
\end{proposition}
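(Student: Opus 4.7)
I would split into two arguments, one for each statement, both leveraging the machinery of \S\ref{s:main}.

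For the weak$^\ast$ statement I would use a compactness plus uniqueness scheme. Weak$^\ast$ convergence in $L^\infty$ of the data forces uniform boundedness of $\{\bar\theta_n,\theta_{0,n},\underline\theta_n\}$, and \eqref{e:linftysol2} then yields a uniform $L^\infty$ bound on $\{\theta_n\}$. Given any subsequence, I would extract a further weak$^\ast$ convergent subsequence $\theta_{n_k}\weaks \tilde\theta$. Passing to the limit in \eqref{e:suppin2} is immediate, since $\rho(\partial_t\varphi+b\partial_x\varphi)\in L^1$. To transfer the initial and boundary conditions, I would simultaneously extract weak$^\ast$ limits of the normal traces $\mathrm{Tr}[b\rho\theta_{n_k}](\cdot,\alpha^+)$, $\mathrm{Tr}[b\rho\theta_{n_k}](\cdot,\beta^-)$ and of $[\rho\theta_{n_k}]_0$, using Lemma~\ref{L_estimate_traces} for the uniform $L^\infty$ bounds; passing to the limit in \eqref{e:suppout} and invoking the uniqueness of the trace triple identifies these limits with the corresponding traces of $\rho\tilde\theta$. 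Finally, \eqref{e:dataibvp2} passes to the limit because $\mathrm{Tr}[b\rho](\cdot,\alpha^+)\in L^\infty$ converts any $L^1$ test function into an $L^1$ test function, so that $\mathrm{Tr}[b\rho](\cdot,\alpha^+)\bar\theta_n\weaks \mathrm{Tr}[b\rho](\cdot,\alpha^+)\bar\theta$, and analogously at $\beta$. Hence $\tilde\theta$ solves \eqref{IVPtheta2} with data $(\bar\theta,\theta_0,\underline\theta)$, and the uniqueness part of Theorem~\ref{t:genex} gives $\rho\tilde\theta=\rho\theta$. Since every subsequential limit equals $\rho\theta$, the full sequence converges weak$^\ast$ to $\rho\theta$.

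For the strong $L^1$ statement I would use the explicit representation formula \eqref{E_theta_characteristics}, which expresses $\rho\theta_n$ ($\rho\Leb^2$-a.e.) in terms of $\bar\theta_n,\theta_{0,n},\underline\theta_n$ evaluated along the weak characteristic curves. By linearity, $\rho(\theta_n-\theta)=\rho\Delta_n$, where on the middle region $\{x_\alpha(t)<x<x_\beta(t)\}$ we have $\Delta_n(t,x)=(\theta_{0,n}-\theta_0)(\gamma_{t,x}(0))$, and analogously on the two boundary regions. The decisive step is a pushforward estimate: in the middle region, combining Lemma~\ref{L_2}(f), the identity $\partial_xQ(t,\cdot)=\rho(t,\cdot)$, and the area formula applied to the Lipschitz monotone map $Q(t,\cdot)$ yields
\[
\int_{x_\alpha(t)}^{x_\beta(t)}\rho(t,x)\,\ind_{\{\gamma_{t,x}(0)\in A\}}\,dx\;\leq\; \Leb^1(Q(0,A))\;=\;\int_A[\rho]_0(y)\,dy,\qquad A\subseteq]\alpha,\beta[,
\]
so that $(\gamma_{\cdot,\cdot}(0))_\#(\rho\Leb^2\llcorner\{x_\alpha(t)<x<x_\beta(t)\})\leq T\,[\rho]_0\Leb^1\leq T\|\rho\|_{L^\infty}\Leb^1$. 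An analogous computation for the left and right regions, based on $Q(t,x)=Q(t_*(\gamma_{t,x}),\alpha)$ and $\partial_tQ(\cdot,\alpha)=\mathrm{Tr}[b\rho](\cdot,\alpha^+)$, bounds the corresponding pushforwards by $T|\mathrm{Tr}[b\rho](\cdot,\alpha^+)|\,\Leb^1$ and $T|\mathrm{Tr}[b\rho](\cdot,\beta^-)|\,\Leb^1$, hence by $T\|b\rho\|_{L^\infty}\Leb^1$. Summing the three contributions yields
\[
\|\rho(\theta_n-\theta)\|_{L^1}\;\leq\; C(T,\|\rho\|_{L^\infty},\|b\rho\|_{L^\infty})\bigl(\|\bar\theta_n-\bar\theta\|_{L^1}+\|\theta_{0,n}-\theta_0\|_{L^1}+\|\underline\theta_n-\underline\theta\|_{L^1}\bigr).
\]

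The main obstacle is the pushforward estimate. Although morally transparent from the level-set picture of $Q$, its rigorous justification requires carefully combining the Lipschitz regularity and monotonicity of $Q(t,\cdot)$ with property (f) of Lemma~\ref{L_2}, and working with the proper $\rho\Leb^2$-a.e.\ representative of $\theta_n$ from \eqref{E_theta_characteristics}. The boundary regions demand additional care, since the corresponding pushforwards are supported on the envelope sets $E^-_{\bar t},E^+_{\bar t}$ rather than on the whole of $]0,T[$, and the natural control measure there is $|\mathrm{Tr}[b\rho](\cdot,\alpha^+)|\,\Leb^1$ rather than plain $\Leb^1$.
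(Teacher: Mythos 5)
The paper gives no argument of its own here---it defers entirely to [CCS17, Theorems 6.1 and 6.2]. Your weak$^\ast$ argument (uniform $L^\infty$ bounds from \eqref{e:linftysol2}, uniform trace bounds from Lemma~\ref{L_estimate_traces}, passage to the limit in \eqref{e:suppout} and in \eqref{e:dataibvp2}, identification through the uniqueness part of Theorem~\ref{t:genex}, subsequence principle) is the standard compactness-plus-uniqueness scheme and is exactly what that citation encodes. Your strong $L^1$ argument, by contrast, genuinely departs from the cited route. The soft argument the paper points to is renormalization: by Proposition~\ref{P_existence} together with uniqueness, $q(\theta_n)$ solves \eqref{IVPtheta2} with data $q(\bar\theta_n)$, $q(\theta_{0,n})$, $q(\underline\theta_n)$; taking $q(s)=s$ and $q(s)=s^2$ in the weak$^\ast$ result just proved yields $\rho\theta_n\weaks\rho\theta$ and $\rho\theta_n^2\weaks\rho\theta^2$, hence $\int\rho(\theta_n-\theta)^2\to 0$ and $\rho\theta_n\to\rho\theta$ in $L^1$ by Cauchy--Schwarz. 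Your pushforward argument instead produces the quantitative Lipschitz estimate
\[
\|\rho(\theta_n-\theta)\|_{L^1}\leq C\bigl(T,\|\rho\|_{L^\infty},\|b\rho\|_{L^\infty}\bigr)\Bigl(\|\bar\theta_n-\bar\theta\|_{L^1}+\|\theta_{0,n}-\theta_0\|_{L^1}+\|\underline\theta_n-\underline\theta\|_{L^1}\Bigr),
\]
which is more information: the constant is independent of the data, so you never need a uniform $L^\infty$ bound on the data, whereas the renormalization route needs one to make $q(s)=s^2$ preserve strong $L^1$ convergence. The trade-off is that your argument is welded to the one-dimensional Lagrangian machinery of \S\ref{s:main} (level sets of $Q$, the curves $\gamma_{t,x}$, the envelope sets $E^\pm_t$) while the renormalization route is portable. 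Two points to tighten: the chain $\Leb^1(Q(0,A))=\int_A[\rho]_0\,dy$ should read $\Leb^1(Q(0,A))\le\int_A[\rho]_0\,dy$ (the area formula for Lipschitz maps gives $\Leb^1(f(A))\le\int_A|f'|$, with equality only under injectivity on $A$; the $\le$ is the direction you actually use); and you should state explicitly that the pushforward estimate is applied to the $\rho\Leb^2$-a.e.\ representative fixed by \eqref{E_theta_characteristics}, which is harmless since the exceptional set carries no $\rho\Leb^2$ mass and so does not contribute to $\|\rho(\theta_n-\theta)\|_{L^1}$.
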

The proof follows by the same argument as in the proof of~\cite[Theorem 6.1]{CCS17} and of~\cite[Theorem 6.2]{CCS17}.
%
 \section{Positive velocity fields} \label{s:pos}
\begin{lemma}
\label{l:sign} Under the assumptions of Theorem~\ref{t:fanta}, we have 
$\mathrm{Tr} [b \rho ] (\cdot, \alpha^+) \leq 0$ and 
$\mathrm{Tr} [b \rho ] (\cdot, \beta^-)~\ge~0$
a.e. on $]0, T[$. 
\end{lemma}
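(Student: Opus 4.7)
The strategy is to exploit the potential function $Q$ defined in~\eqref{e:verde} together with the identification of its boundary time-derivatives with the normal traces of $b\rho$, which is precisely what Lemma~\ref{l:crema} provides (applied to $\theta\equiv 1$, in which case $Q_\theta=Q$).

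Under the standing hypotheses of Theorem~\ref{t:fanta} we have $b\ge 0$ by assumption and $\rho\ge 0$ by Definition~\ref{d:ni1d}, hence $\partial_x Q=\rho\ge 0$ and $\partial_t Q=-b\rho\le 0$ a.e.\ on $]0,T[\times]\alpha,\beta[$. Since $b$ and $\rho$ are both bounded, $Q$ is Lipschitz continuous on the closed rectangle $[0,T]\times[\alpha,\beta]$. By Fubini, for a.e.\ $x\in]\alpha,\beta[$ the absolutely continuous map $t\mapsto Q(t,x)$ has non-positive derivative a.e., so it is monotone non-increasing on $[0,T]$. Passing to the limit $x\to\alpha^+$ and $x\to\beta^-$ and using the continuity of $Q$ on $[0,T]\times[\alpha,\beta]$, this monotonicity extends to the endpoints: the maps $t\mapsto Q(t,\alpha)$ and $t\mapsto Q(t,\beta)$ are Lipschitz and monotone non-increasing on $[0,T]$, so their (a.e.-defined) derivatives satisfy $\partial_t Q(t,\alpha)\le 0$ and $\partial_t Q(t,\beta)\le 0$ for a.e.\ $t\in]0,T[$.

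To conclude, I would invoke Lemma~\ref{l:crema} with $\theta\equiv 1$, which yields the identities
\begin{equation*}
\mathrm{Tr}[\rho b](t,\alpha^+)=\partial_t Q(t,\alpha),\qquad \mathrm{Tr}[\rho b](t,\beta^-)=-\partial_t Q(t,\beta)\qquad\text{for a.e.\ }t\in]0,T[.
\end{equation*}
Combining with the sign information obtained above gives $\mathrm{Tr}[\rho b](t,\alpha^+)\le 0$ and $\mathrm{Tr}[\rho b](t,\beta^-)\ge 0$ a.e., which is precisely the assertion of the lemma. There is no real obstacle here: all the nontrivial content (existence and Lipschitz regularity of $Q$, and the normal-trace representation of $\partial_t Q(\cdot,\alpha)$ and $\partial_t Q(\cdot,\beta)$) has already been established, and the only additional ingredient is the elementary transfer of pointwise monotonicity from the interior to the boundary $x=\alpha,\beta$ by continuity.
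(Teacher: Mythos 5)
Your proof is correct, but it follows a genuinely different route from the one in the paper. You reduce the sign of the traces to the monotonicity in time of the boundary values of the Lipschitz potential $Q$ defined by~\eqref{e:verde}: since $\partial_t Q=-b\rho\le 0$ a.e.\ in the interior, Fubini gives that $t\mapsto Q(t,x)$ is non-increasing for a.e.\ $x$, continuity of $Q$ transfers this to the endpoints $x=\alpha,\beta$, and the identification $\partial_t Q(\cdot,\alpha)=\mathrm{Tr}[\rho b](\cdot,\alpha^+)$, $\partial_t Q(\cdot,\beta)=-\mathrm{Tr}[\rho b](\cdot,\beta^-)$ from Lemma~\ref{l:crema} (with $\theta\equiv 1$, so that $Q_\theta=Q$, cf.\ Remark~\ref{r:carta}) closes the argument. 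The paper instead argues entirely within the distributional formulation: it fixes $\upsilon\in C^\infty_c(]0,T[)$, $\upsilon\ge0$, tests~\eqref{e:suppout2} against $\psi_n(t,x)=\upsilon(t)\zeta_n(x)$ with cutoffs $\zeta_n$ concentrated near $\beta$ satisfying $\zeta_n(\beta)=1$, $\zeta_n'\ge0$, and splits the resulting identity into a vanishing term and a term of sign determined by $b\rho\ge0$, yielding $\int\mathrm{Tr}[b\rho](t,\beta^-)\upsilon\,dt\ge0$ directly. Your argument is more conceptual (it leverages the potential-function machinery, exposing the sign of the trace as a consequence of monotonicity of $Q$) and treats both endpoints symmetrically; the paper's argument is more elementary and, importantly, self-contained within \S\ref{s:pos}, which the outline explicitly advertises as readable independently of \S\ref{s:main} where Lemma~\ref{l:crema} lives. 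Both routes are sound.
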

\begin{proof}
We only prove $\mathrm{Tr} [b \rho ] (\cdot, \beta^-) \ge 0$ a.e. on $]0, T[$, since the proof of the other inequality is analogous. It suffices to show that 
\be \label{e:tavolo}
      \int_0^T \mathrm{Tr} [b \rho ] (t, \beta^-) \upsilon (t) dt \ge 0, \quad \text{for every $\upsilon \in C^\infty_c (]0, T[)$, $\upsilon \ge 0$}. 
\eq
To this end, we use~\eqref{e:suppout} and we choose a sequence of test functions $\{ \psi_n \}_{n \in \mathbb N}$ having the form ${\psi_n (t, x) : = \upsilon (t) \zeta_n (x)}$, where $\{ \zeta_n \}_{n \in \mathbb N} \subseteq C^\infty_c (\R)$ satisfies 
\be 
\label{e:scrivania}
     \zeta_n(x)=0 \; \text{if $x \leq \beta - \frac{1}{n}$}, \qquad 
    \zeta_n(\beta) =1, \qquad 
    \zeta'_n (x) \ge 0 \; \text{for every $x \leq \beta$}.  
\eq
By plugging $\psi_n$ into~\eqref{e:suppout} we arrive at
\begin{equation*}
\begin{split}
      \int_0^T & \! \! \int_\alpha^\beta \rho  (\upsilon' \zeta_n  + b \upsilon \zeta'_n ) dx dt  
       =
    \int_0^T \! \!  v (t) \zeta_n (\alpha) \mathrm{Tr} [b \rho] (t, \alpha^+)  dt + \int_0^T \! \!   v (t) \zeta_n ( \beta) \mathrm{Tr} [b \rho ] (t, \beta^-)  dt 
\end{split}
\end{equation*}
and by using~\eqref{e:scrivania} this yields 
$$ 
    \underbrace{ \int_0^T \! \! \int_{\beta -1/n}^\beta \rho  \upsilon' \zeta_n dx dt}_{P_n} +
    \underbrace{\int_0^T \! \! \int_\alpha^\beta b \rho    \upsilon \zeta'_n  dx dt}_{S_n}
   =  \int_0^T \! \!   \upsilon (t) \mathrm{Tr} [b \rho ] (t, \beta^-)dt.
$$
Note that $\lim_{n \to + \infty} P_n=0$ and that $S_n \ge 0$ owing to~\eqref{e:scrivania} and to the assumption $b \rho \ge 0$. This implies that the right hand side of the above equation is nonnegative and yields~\eqref{e:tavolo}. 
\end{proof}
\begin{remark}\label{r:libri} 
Owing to Lemma~\ref{L_estimate_traces}, $\mathrm{Tr}[b \rho \theta] (t, \alpha^+)=0$ for a.e. $t \in ]0, T[$ such that $\mathrm{Tr}[b \rho] (t, \alpha^+)=0$, and the same property holds at $\beta$. This implies that~\eqref{e:dataibvp2} are automatically satisfied at the points $t$ where $\mathrm{Tr}[b \rho] (t, \alpha^+)=0$ or  $\mathrm{Tr}[b \rho] (t, \beta^-)=0$. In other words, in~\eqref{IVPtheta2} we could equivalently assign the data $\bar \theta$ and $\underline \theta$ on the sets where $ \mathrm{Tr}[b \rho] (\cdot, \alpha^+)\leq 0$ and $ \mathrm{Tr}[b \rho] (\cdot, \beta^-)\leq 0$, respectively. This in particular yields that, owing to Lemma~\ref{l:sign}, Theorem~\ref{t:genex}, Corollary~\ref{c:comparison}, Proposition~\ref{p:the} extend to the initial-boundary value problem~\eqref{IVPtheta}. 
\end{remark}
\subsection{Proof of Theorem~\ref{t:fanta}} \label{ss:fanta}
We basically rely on the same approximation argument as in \S\ref{ss:pthe} and explicitely construct a solution $\theta$ and, for any given $x \in ]\alpha, \beta[$, a function $\tilde \theta_x$ as in the statement of Theorem~\ref{t:fanta}.
We proceed according to the following steps.\\
{\sc Step 1:} we introduce the approximation argument: we recall the definition of $A$ and $B$  in~\eqref{e:AB}. Since, owing to Lemma~\ref{l:sign}, $\mathrm{Tr}[\rho \theta] (\cdot, \alpha^+) \leq 0$ and $\mathrm{Tr}[\rho \theta] (\cdot, \beta^-) \ge 0$, then $B \ge 0$. Next, we recall~\eqref{e:kernels} and we introduce the following definitions:
\be 
\label{e:cucina}
     \rho_n : = A \ast \gamma_n + \frac{1}{n}, \qquad 
     B_n : = B \ast \gamma_n + \frac{1}{n}, 
    \qquad 
     b_n : = \frac{B_n}{\rho_n}.
\eq
Note that $b_n>0$ and that 
\be \label{e:frigo}
     \| \rho_n \|_{L^\infty} \leq \| \rho \|_{L^\infty} + 1, \qquad 
     \| b_n \rho_n \|_{L^\infty} \leq \| \rho b \|_{L^\infty}.
\eq
Also, by using~\eqref{e:sututtoerre} we have the equality $\partial_t \rho_n + \partial_x [b_n \rho_n ] =0$ on $]0, T[ \times \R$. 
We term $\theta_n$  the solution of the initial-boundary value problem
\be \label{e:tappeto}
\left\{
\begin{array}{ll}
      \partial_t \theta_n + b_n \partial_x \theta_n =0 \\
    \theta_n (\cdot, \alpha) = \bar \theta_n, \qquad \theta_n(0, \cdot) = \theta_{0, n} 
\end{array}
\right.
\eq
and point out that the above problem is well-posed since $b_n$ is smooth and strictly positive. The functions $\theta_{0,n}$ e $\bar \theta_n$ are the same as in~\eqref{e:sedia}. Note that, by construction, $\kappa \leq \theta_n \leq K$ if both $\theta_{0}$ and $\bar \theta$ are comprised between $\kappa$ and $K$.  \\
{\sc Step 2:} we fix $x \in ]\alpha, \beta]$ and establish some a priori estimates.  First, we point out that $\theta_n$ is a smooth function. Next, 
by using the inequality $b_n > 0$ we get $ |\partial_t \theta_n | =
     b _n|\partial_x \theta_n | $. 
We set $v_n: = \partial_x \theta_n$ and recall that $|v_n|$ is a distributional  solution of the continuity equation~\eqref{e:continuityb}. By space-integrating we get 
\begin{equation} \label{e:tfc} 
\begin{split}
    \frac{d}{dt} \int_\alpha^{x} |\partial_x \theta_n | (t, y) dy &=
    \frac{d}{dt} \int_\alpha^{ x} |v_n | (t, y) dy = -
     \int_\alpha^{x} \partial_x [ b_n | v_n | ] =
     b_n |v_n| (t, \alpha) - b_n |v_n| (t, x), 
\end{split}
\end{equation}
which yields 
\begin{equation*} \begin{split}
    |\partial_t \theta_n |(t, {x}) & = 
   b_n |\partial_x \theta_n | (t, { x}) \stackrel{\eqref{e:tfc}}{=}    b_n |\partial_x \theta_n | (t, \alpha) - \frac{d}{dt} \int_\alpha^{{x}} 
    |\partial_x \theta_n | (t, y) dy \\ &
    =  |\partial_t \theta_n| (t, \alpha) 
   - \frac{d}{dt} \int_\alpha^{{ x}}  |\partial_x \theta_n| (t, y) dy 
  \stackrel{\eqref{e:tappeto}}{=}   |\bar \theta_n'|
     -\frac{d}{dt} \int_\alpha^{{ x}}  |\partial_x \theta_n| (t, y) dy \,.
\end{split}
\end{equation*} 
By time-integrating the above expression we arrive at 
\be \label{e:tvm2}
    \int_0^T  |\partial_t \theta_n |(t,  {x}) dt \leq 
     \int_\alpha^\beta |\theta'_{0,n} |  dx  + \int_0^T  |\bar \theta_n'| dt
    \stackrel{\eqref{e:agenda},\eqref{e:btm2}}{\leq}
   \mathrm{TotVar} \,  \theta_0+ 
     | \theta_0(\alpha^+) -   \bar \theta (0^+)| +
    \mathrm{TotVar} \, \bar \theta, 
\eq
for every $x \in ]\alpha, \beta]$. \\ 
{\sc Step 3:} we pass to the limit. By arguing as in {\sc Step 3} of \S\ref{ss:pthe} we get that
the sequence $\theta_n$ converges up to subsequences to a limit function $\theta$ weakly$^\ast$ in $L^\infty (]0, T[ \times ]\alpha, \beta[)$ as $n \to + \infty$, and that $\theta$ is a solution of the initial-boundary value problem~\eqref{IVPtheta} in the sense of Definition~\ref{d:ibvp}. Also, we fix $x \in ]\alpha, \beta[$, recall~\eqref{e:frigo} and conclude that, up to subsequences, the functions $b_n \rho_n(\cdot, x)$ and $b_n \rho_n \theta_n (\cdot, x)$ converge weakly$^{\ast}$ in the sense of measures to some limit functions. By passing to the limit in the definition of normal trace~\eqref{e:acmnt} we conclude that the limit functions are exactly $\mathrm{Tr}[b \rho] (\cdot, x)$ and $\mathrm{Tr}[b\rho \theta](\cdot, x)$, respectively. By recalling~\eqref{e:tvm2} and relying on the Helly-Kolmogorov Compactness Theorem we conclude that, up to subsequences, the function $\theta_(\cdot, x)$ strongly converges in $L^1(]0, T[)$ to some limit function $\tilde \theta_x$ satisfying~\eqref{e:bvatx}. By passing to the limit in the equality $b_n \rho_n \theta_n (\cdot, x) = b_n \rho_n (\cdot, x) \theta_n (\cdot, x)$ we arrive at~\eqref{e:bvatx}. We are left to establish~\eqref{e:traghetto}: it suffices to recall that, as pointed out in {\sc Step 1},  $\kappa \leq \theta_n (\cdot, x) \leq K$ provided both $\theta_{0}$ and $\bar \theta$ are comprised between $\kappa$ and $K$. 
\subsection{A counterexample to~\eqref{e:bvatx} for sign-changing $b$} \label{sss:sprite}
We set $T =1$, $\alpha =-4$, $\beta =4$ and consider the domain $]0, 1[ \times ]-4, 4[$.
We set 
\be \label{e:mocaccino}
     b(t, x) : = a'(t), \qquad a(t) = (1-t)^2 \sin \left( \frac{\pi}{1-t} \right) 
\eq
and observe that $b \in L^\infty (]0, 1[ \times ]-4, 4[)$ is a nearly incompressible vector field with density $\rho \equiv 1$. We set 
\be \label{e:capoinb}
     \theta_0 (x) : = 
\left\{ 
     \begin{array}{ll} 
     -1 & -4 < x <0 \\
      1 & 0 < x < 4 \\
    \end{array}  
\right.
       \qquad 
     \bar \theta(t) : = -1 \qquad \text{and} \qquad
     \underline \theta (t) : =  1
\eq 
and consider the initial-boundary value problem~\eqref{IVPtheta2}. Since $b$ is a regular vector field,  we can apply the classical method of characteristics: the solution $\theta$ is constant along the curves with slope $b$. 
If $x_0 \in ]-4, 4[$, then the characteristic curve starting at $x_0$  is 
\be \label{e:cioccolata}
     X(t, x_0)= x_0 + (1-t)^2 \sin \left( \frac{\pi}{1-t} \right)
\eq
and hence the solution of the initial-boundary value problem~\eqref{IVPtheta2},~\eqref{e:capoinb} is 
\be
\label{e:guarana}
       \theta(t, x) = 
       \left\{
       \begin{array}{ll}
       -1 & x < X(t, 0) \\
       1   & x > X(t, 0). \\
      \end{array}
     \right.
\eq
Note that the above $\theta \in L^\infty (]0, 1[ \times ]-4, 4[)$ is the unique solution given by Theorem~\ref{t:genex} because $\rho=1>0$. 
We now focus on the vertical segment $x=0$. Since $\rho \equiv 1$ and $b$ is a $C^\infty$ vector field, then 
$\mathrm {Tr} [\rho b](t, \cdot) = b(t)$ for a.e. $t \in ]0, 1[$. Since $b(t) \neq 0$ for a.e. $t \in ]0, 1[$, then the value of $\tilde \theta_0$ satisfying~\eqref{e:bvatx} is uniquely determined for a.e. $t \in ]0, 1[$ and by using the explicit expression of $\theta$ in~\eqref{e:guarana} we get $\tilde \theta_0 = \theta(\cdot, 0)$. Owing to the explicit expressions~\eqref{e:cioccolata} and~\eqref{e:guarana}, this yields $\mathrm{TotVar}\, \tilde \theta_0 = + \infty$ and provides a counterexample to~\eqref{e:bvatx} in the case of the sign-changing velocity $b$ given by~\eqref{e:mocaccino}.
\subsection{Proof of Theorem~\ref{t:genex} in the case $b \ge 0$} \label{ss:gazzosa}
The existence 
follows from the same argument as in \S\ref{ss:fanta}, provided we define the initial and boundary datum as in~\eqref{e:liscia3} and~\eqref{e:liscia4}. We are left to establish uniqueness. 
We fix $\theta_1, \theta_2$ distributional solutions of~\eqref{IVPtheta} and we show that $\rho \theta_1=\rho \theta_2$ a.e. on $]0, T[ \times ]\alpha, \beta[$. Owing to the fact that the equation is linear it suffices to show that, if $\bar \theta \equiv 0$ and $\theta_0 \equiv 0$, then any solution of~\eqref{IVPtheta} satisfies $\rho \theta =0$ a.e. on $]0, T[ \times ]\alpha, \beta[$. To this end we adapt the argument in the proof of~\cite[Theorem 2]{Panov}. Let $Q_\theta: [0, T] \times [\alpha, \beta] \to \R$ be the Lipschitz continuous potential function defined by~\eqref{E_def_Q}.  
We now proceed according to the following steps. \\
{\sc Step 1:} {\color{blue}} we show that 
\be \label{e:zeropot}
Q_\theta(0, x)=0 \; \text{for every $x \in [\alpha, \beta]$}, \qquad 
 Q_\theta(t, \alpha)=0 \; \text{for every $t \in [0, T]$.}
\eq
 We recall that by assumption the function $\theta$ satisfies~\eqref{e:suppout} with $[\rho \theta]_0=0$ owing to~\eqref{e:dataibvp} and to the equality $\theta_0 \equiv0$. This implies that, for every $\zeta \in  C^\infty_c (]\alpha, \beta[)$ and every $\nu_n \in C^\infty (]- \infty, T[)$ we have 
$$
    0= \int_0^T \! \! \int_\alpha^{\beta} \rho \theta (\zeta \nu_n' + b  \zeta' \nu_n ) dx dt \stackrel{\eqref{E_def_Q}}{=}
    \int_0^T \! \! \int_\alpha^{\beta} \partial_x Q_\theta (\zeta \nu_n' + b  \zeta' \nu_n ) dx dt\,.
$$
We now choose the sequence $\{ \nu_n \}_{n \in \mathbb N}$ by setting $\nu_n (t) : = \nu (tn)$, where $\nu$ is a standard cut-off function such that $\nu \equiv 1$ on $]-1, 0]$, $\nu \equiv 0$ on $[1, + \infty[$ and $\nu'_n \leq 0$ on $[0, + \infty[$. By letting $n \to + \infty$ we arrive at 
$$
    \int_\alpha^{\beta} \partial_x Q_\theta (0, \cdot) \zeta dx =0, \quad 
    \text{for every $\zeta \in C^\infty_c (]\alpha, \beta[)$}
$$
and this implies that $Q_\theta(0, x)$ is a constant function. By recalling~\eqref{E_def_Q} we conclude that $Q_\theta(0, x) =0$ for every $x \in [\alpha, \beta]$. The proof of the second equality 
in~\eqref{e:zeropot} is analogous. \\ 
{\sc Step 2:} we set $z (q) : = q^2 / (1 + q^2)$ and show that the function $z(Q_\theta)$ satisfies 
\be \label{e:zeta}
    \partial_t [\rho z(Q_\theta)] + \partial_x [b \rho z (Q_\theta) ] =0, \quad 
    \mathrm{Tr}[b \rho z(Q_\theta) ]( \cdot, \alpha^+)  =0, \quad 
    [\rho z (Q_\theta)]_0 =0, 
     \quad 
    \mathrm{Tr}[b\rho z(Q_\theta)]( \cdot, \beta^-)  \ge 0. 
\eq
To establish the first three equalities in~\eqref{e:zeta} it suffices to show that 
\be \label{e:zeta2}
       \begin{split}
    \int_0^T \! \! \int_\alpha^\beta \rho z(Q_\theta) (\partial_t \xi + b \partial_x \xi) dx dt  = 0 \qquad 
       \text{for every $\xi \in C^\infty_c (]- \infty, T[ \times ]-\infty, \beta[)$.}
\end{split}
\eq
To this end we recall Remark \ref{r:carta} and formula \eqref{e:suppout2}
and apply a standard approximation argument on the test function to conclude that
\be \label{e:xilo}
 \begin{split}
    \int_0^T \! \! \int_\alpha^\beta \rho  (\partial_t \psi + b \partial_x \psi) dx dt & = 
    \int_0^T \! \!  \psi \mathrm{Tr} [b \rho ] (\cdot, \alpha^+)  dt -
    \int_\alpha^\beta [\rho]_0 \psi(0, \cdot) dx 
\end{split}
\eq 
for every Lipschitz continuous function $\psi: \R^2 \to \R$ which is compactly supported in $]-\infty, T[ \times ]- \infty, \beta[$. We now 
fix $\xi \in C^\infty_c (]-\infty, T[ \times ]-\infty, \beta[)$ and we set 
$$
    \psi (t, x) = \left\{
    \begin{array}{ll}
    \xi z (Q_\theta)(t, x) & (t, x) \in ]- \infty, T[ \times ]\alpha, \beta[ \\
    0 & \text{elsewhere.}
    \end{array}
   \right.
$$ 
Note that $\psi$ is a Lipschitz continuous function owing to~\eqref{e:zeropot} and to the equality $z(0)=0$. We plug the above expression into~\eqref{e:xilo}, we point out that owing to~\eqref{e:zeropot} $\psi (\cdot, \alpha) \equiv 0$, $\psi(0, \cdot) \equiv 0$ and get 
\begin{equation*}
 \begin{split}
    0 = \int_0^T \! \! \int_\alpha^\beta \rho  (\partial_t \psi + b \partial_x \psi) dx dt & = 
    \int_0^T \! \! \int_\alpha^\beta \rho  
   \big(\partial_t \xi z(Q_\theta) + \xi \partial_t [z(Q_\theta)] + b \partial_x \xi z(Q_\theta) +b \xi \partial_x [z(Q_\theta)] \big) dx dt  \\ 
   & \stackrel{\eqref{E_def_Q}}{=}
    \int_0^T \! \! \int_\alpha^\beta \rho  z(Q_\theta)
   (\partial_t \xi + b \partial_x \xi   ) dx dt, 
\end{split}
\end{equation*}
 that is~\eqref{e:zeta2} owing to the arbitrariness of the function $\xi$. To establish the last inequality in~\eqref{e:zeta} it suffices to apply Lemma~\ref{l:sign} and recall that $b \rho z(Q_\theta) \ge 0$. \\
{\sc Step 3:} we establish the equality $\rho z (Q_\theta) =0$ a.e. on $]0, T[ \times ]\alpha, \beta[$. We set $\psi (t, x)= \nu_n(t) \zeta (x)$, where $\nu_n \in C^\infty_c (]-\infty, T[)$, $\nu_n \ge 0$ and $\zeta$ satisfies $\zeta (x)=1$ for every $x \in [\alpha, \beta]$. We combine~\eqref{e:zeta} with~\eqref{e:suppout} and get 
$$
      \int_0^T \! \! \int_\alpha^\beta \rho z(Q_\theta)  \partial_t \nu_n dx dt
       = \int_0^T \nu_n \mathrm{Tr} [b \rho z(Q_\theta)] (\cdot, \beta^-) dt \ge 0. 
$$
Fix $t \in ]0, T[$ and choose a suitable sequence of test functions $\{ \nu_n\}_{n \in \mathbb N}$ converging to the characteristic function of $]-\infty, t[$. By recalling Lemma~\ref{l:dafermos}, the above inequality yields 
$$
    \int_\alpha^\beta \rho z(Q_\theta)  (t, \cdot) dx \leq 0
$$
in turn implying $\rho z(Q_\theta) = 0$ a.e. $]0, T[\times ]\alpha, \beta[$.   \\
{\sc Step 4:} we conclude the proof. The equality $\rho z(Q_\theta)\equiv 0$ yields $\rho \theta z (Q_\theta) =0= \rho b \theta z(Q_\theta)$. We choose a function $Z: \R \to \R$ such that $Z'=z$ and we point out that $Z$ is strictly monotone. Also, 
$$
    \partial_x [Z(Q_\theta)] = z(Q_\theta) \partial_x Q_\theta = z(Q_\theta) \rho \theta =0, \qquad 
    \partial_t [Z(Q_\theta)] = z(Q_\theta) \partial_t z(Q_\theta)= - z(Q_\theta) b \rho \theta =0.
$$
This implies that $Z(Q_\theta)$ is constant and, by the strict monotonicity of $Z$, that $Q_\theta$ is constant. Since $Q_\theta(0, \alpha) =0$, this yields $Q_\theta \equiv 0$ and by~\eqref{E_def_Q} we eventually get $\rho \theta \equiv 0.$
\hspace{5cm}$\Box$
\begin{remark} \label{r:rosso}
As pointed out before, the uniqueness proof discussed in \S\ref{ss:gazzosa} is based on an argument due to Panov~\cite{Panov}. The reason why it fails in the case of a sign-changing velocity field $b$ (and hence the reason why we had to rely on the more technical construction discussed in \S\ref{s:main}) is because we cannot a-priori prove that $Q_\theta( \cdot, \alpha) \equiv 0$. Indeed, the heuristic reason why this inequality holds true in the case $ b \ge 0$ is because $\partial_t \theta= - b \rho \theta$ and $\theta (\cdot, \alpha) \equiv 0$, which yields $\partial_t \theta =0$ at $x = \alpha$. 
In the case of a sign-changing coefficient the equality $\theta (\cdot, \alpha) \equiv 0$ no longer holds true and hence we cannot conclude that $Q_\theta( \cdot, \alpha)\equiv 0$.
\end{remark}

\section*{Acknowledgments}
S.D. and L.V.S. are partially supported by the INDAM-GNAMPA project 2020 \emph{Modelli differenziali alle derivate parziali per fenomeni di interazione}. E.M. is supported by the SNF Grant 182565. Part of this work was done while S.D. was affiliated to IMATI-CNR, Pavia. 
\bibliographystyle{plain}
\bibliography{1dtra}
\end{document}